\def\H{\mathcal{H}}
\def\N{\mathbb N}
\def\R{\mathbb R}
\def\H{\mathcal{H}^{n-1}}
\def\e{\varepsilon}
\newtheorem{theorem}{Theorem}[section]
\newtheorem{definition}[theorem]{Definition}
\newtheorem{proposition}[theorem]{Proposition}
\newtheorem{lemma}[theorem]{Lemma}
\newtheorem{corollary}[theorem]{Corollary}
\theoremstyle{remark}
\newtheorem{remark}{Remark}[section]
\numberwithin{equation}{section}
\numberwithin{figure}{section}
\address{Department of Mathematics, The University of Texas at Austin, 2515 Speedway, Stop C1200, Austin TX 78712-1202, USA}
\email{caffarel@math.utexas.edu}
\email{antoniofarah@utexas.edu}
\address{Department of Mathematics, Johns Hopkins University, 3400 N. Charles Street, Baltimore, MD 21218, United States of America}
\email{drestre1@jh.edu}
\title{On the Grad-Mercier equation and Semilinear Free Boundary Problems}
\author{Luis Caffarelli, Antonio Farah, and Daniel Restrepo}
\begin{document}
	
	\begin{abstract} In this paper, we establish regularity and uniqueness results for Grad-Mercier type equations that arise in the context of plasma physics. We show that solutions of this problem naturally develop a dead core, which corresponds to the set where the solutions become identically equal to their maximum. We prove uniqueness, sharp regularity, and non-degeneracy bounds for solutions under suitable assumptions on the reaction term. Of independent interest, our methods allow us to prove that the free boundaries of a broad class of semilinear equations have locally finite $\H$ measure. 
	\end{abstract}

	\maketitle
\bigskip

\section{Introduction}

\subsection{Overview} This paper is devoted to the study of the boundary value problem
\begin{equation}
	\label{main problem grad}
	\begin{cases}
		-\Delta u(x) = g(|u \geq u(x)|),\quad \text{in}\, \, \Omega,\\
		u=0, \qquad \text{on}\, \, \partial \Omega,
	\end{cases}
\end{equation}
where, on the right hand side, we use the notation
$$u\geq u(x):=\{y\in \Omega:u(y)\geq u(x)\}$$
for the superlevel sets of $u$, and $\vert\cdot\vert$ to denote the $n$-dimensional Lebesgue measure. Furthermore, $\Omega \subset \R^n$ is an open, bounded, and connected set with $C^{1,1}$ boundary, and $g:[0,|\Omega|]\to \R$ is a Lipschitz continuous function.

Equations of the form \eqref{main problem grad} are known in the literature as queer differential equations (QDEs) or as Grad-Mercier equations. They were originally introduced by Harold Grad in the context of nuclear fusion, more precisely as a model for adiabatic compression \cite{grad1975adiabatic} and resistive diffusion \cite{grad1979alternating} of plasma in toroidal or spherical containers (TOKAMAK).

There are many equations associated with the modeling of plasma physics that have been widely studied in the literature of elliptic PDE (see, e.g., \cite{caffarelli1982convexity,kindspruck,mt}). All of these approaches depart from the magneto-hydrodynamic equilibrium condition
\begin{equation}\label{eq ehd condition}
	-\Delta u = \frac{d p(u)}{du},
\end{equation} 
where $u$ stands for the plasma flux function and $p$ is the (given) pressure function which depends non-linearly on $u$. Even in its most basic and standard formulations, equation \eqref{eq ehd condition} is coupled with some extra conditions that lead to widely studied free boundary problems (see the survey \cite{t}). The presence of the free boundary is physically explained by the segregation of the plasma in two domains, where in one of them the flux is described by \eqref{eq ehd condition}, and, where in the other, which corresponds to a surrounding vacuum region, $u$ is conserved (e.g., harmonic). \\

Equations of the form \eqref{main problem grad} arise when we allow $p$ to depend non-locally on $u$. More precisely, imposing adiabatic constraints in the plasma flux, \cite{grad1975adiabatic} proposed a pressure profile of the form $p(u) = \mu(u) ({u^*}')^\gamma$, where $\gamma \in \R$, $\mu:\R\to \R$ is a smooth function, $u^*$ is the classical decreasing symmetric rearrangement of $u$, and ${u^*}'$ represents the derivative defined a.e. of such a monotonic real-valued function. In this case, $g$ in \eqref{main problem grad} depends on $u$ in a very non-standard and delicate way. Rigorous results in this direction were pioneered by Laurence and Stredulinsky; in a series of works initiated in   \cite{laurence1985new}, they proposed a scheme to obtain solutions to the original Grad equation with suitable geometric features (like convexity on their level sets). This programme ended in \cite{laurence1996gradient}, where they were able to prove existence of minimizers (in $\R^2$) with suitable gradient bounds. Aside of this, major problems like uniqueness, existence in any dimension, and regularity theory for Grad equations are widely open.\\

In this paper, we opt for the approach introduced in the works of Temam and Mossino \cite{mossino1978application, mossino1982priori}, where $g$ is assumed to be a given continuous function. Given the serious technical difficulties to obtain further conclusions from the original equation proposed by Grad, it is still reasonable to address the non-local problem \eqref{main problem grad} to gain insight into what is expected from more complicated models. Furthermore, even in this simplified framework, there are still open problems and technical challenges, such as uniqueness and regularity of solutions, which were posed more than 40 years ago in expository works \cite{t,grad1977magnetic}. Part of this work is devoted to the resolution of some of these open problems.\\

Additionally, we introduce a novel free boundary problem overlooked so far in the study of this problem. We show that, under certain assumptions on $g$, a solution $u$ of \eqref{main problem grad} has a plateau at its maximum level with prescribed Lebesgue measure (depending solely on $g$), i.e., the solution develops a free boundary at $FB(u) := \partial \{x\in \Omega\, |\, u(x) = \max u\}$. After considering the transformation $v=\max u -u$, \eqref{main problem grad} can be written as a semilinear free boundary problem of the form
\begin{equation}\label{eq gen sem}
    \begin{cases}
        \Delta v= f(v), \hspace{2mm} \quad \{v>0\},\\
        |\nabla v| =0, \qquad \partial \{v>0\},
    \end{cases}
\end{equation}
where $f$ is increasing. Under very general hypotheses on $g$, it is possible to show $f(t)\geq \frac{1}{C}t^\frac{1}{3}$, implying that the solution to \eqref{eq gen sem} is also a subsolution to the Alt-Phillips equation 
\begin{equation}\label{eq alt phillips}
    \begin{cases}
        \Delta \rho = C_0 \rho^{\gamma-1}, \hspace{2mm} \quad \{\rho>0\},\\
        |\nabla \rho| =0, \qquad \partial \{\rho>0\},
    \end{cases}
\end{equation}
for some $C_0$ and $\gamma = \frac{4}{3}$ (see \cite{alt1986free}). On the other hand, based on the sharp estimates obtained in the radially symmetric case, it is expected that in general $f(t)\leq C t^\frac{1}{3}$ (see Corollary \ref{corollary radial}), but unfortunately our methods only yield the suboptimal bound $f(t)\leq C t^\frac{1}{5}$. This conjecture is also reinforced by the fact that the finite $n-1$ dimensional measure of the free boundary $\partial \{ v>0\}$ is closely related to the optimal bound $f(t)\leq C t^\frac{1}{3}$ (see Theorem \ref{thm equivalence}).\\

Lastly, we obtain a result of independent interest; namely, we find conditions on the right hand side of the general semilinear free boundary problem \eqref{eq gen sem} that guarantee the $\mathcal{H}^{n-1}$-finiteness of the free boundary of solutions.

\subsection{Statements of the main results}

We begin by establishing our conventions.\\

\noindent
\textbf{Assumptions:} Besides Lipschitz continuity of $g$, we will require one of the following two properties:
\begin{enumerate}
	\item [$(H_1)$] $g$ is strictly positive.
	\item [$(H_2)$] $g(0)\leq 0$ and there exists a unique $\alpha \in [0, |\Omega|)$ such that $g(\alpha) = 0$.
\end{enumerate}

\medskip

\noindent
\textbf{Notations:} Given a function $w \in C(\overline{\Omega})$, we denote its maximum in $\Omega$ by $\max w$ and, accordingly, we consider the set $D_w := \{x \in \Omega \, |\, w(x)= \max w\}$ where $w$ attains its maximum. Given any $t \in \R$, we denote the superlevel set of $u$ by $u\geq t:=\{y\in \Omega:u(y)\geq t\}$; we usually write the Lebesgue measure of this set simply as $|w\geq t|$.
Finally, we say that a real number is a \textbf{universal constant} if it is positive and can be defined in terms of $g$ and $\Omega$ only. Following a widely used convention, we will use the letter $C$ for a generically ``large'' universal constant, and $1/C$ for a generically ``small'' one.\\

\medskip

\noindent 
\textbf{Main results:} Our first result shows the uniqueness of solutions to \eqref{main problem grad}, solving one the open problems posed in \cite{t}.

\begin{theorem}\label{thm 1}
	Let us assume that $g$ is non-decreasing. Then, the problem \eqref{main problem grad} admits exactly one solution $u$. Furthermore, this solution is $C^{1,1}$ up to the boundary and has continuous Laplacian.
\end{theorem}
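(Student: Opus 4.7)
The plan rests on reducing \eqref{main problem grad} to a \emph{local} semilinear Dirichlet problem with monotone reaction. For any solution $u$, set $\mu_u(t):=|u\geq t|$ and $F_u(t):=g(\mu_u(t))$; since $g$ is non-decreasing and $\mu_u$ is non-increasing, $F_u$ is a non-increasing real function and $u$ satisfies $-\Delta u = F_u(u)$ in $\Omega$ with $u=0$ on $\partial\Omega$. For any two solutions $u_1,u_2$ whose induced reactions agree, $F_{u_1}=F_{u_2}\equiv F$, the standard energy identity
\begin{equation*}
\int_{\Omega}|\nabla(u_1-u_2)|^2\, dx \;=\; \int_{\Omega}\bigl(F(u_1)-F(u_2)\bigr)(u_1-u_2)\, dx \;\leq\; 0
\end{equation*}
forces $u_1=u_2$. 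The crux is thus to show that $F_u$ (equivalently $\mu_u$) is \emph{intrinsic}: the same function for every solution, depending only on $g$ and $\Omega$.

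To establish this intrinsic character — the main obstacle of the theorem — I would integrate $-\Delta u = g(\mu_u(u))$ over each super-level set $\{u>t\}$ and use the divergence theorem together with the co-area formula to obtain, for a.e.\ $t\in(0,\max u)$,
\begin{equation*}
\int_{\{u=t\}}|\nabla u|\,d\mathcal{H}^{n-1} \;=\; G(\mu_u(t)) - G(\mu_u(\max u)), \qquad G(s):=\int_0^s g(\sigma)\,d\sigma.
\end{equation*}
Coupling this with Cauchy--Schwarz on level sets and the classical isoperimetric inequality produces a first-order differential inequality for $\mu_u$ that becomes an equality on the radial solution $w$ of the symmetrized problem on the ball $\Omega^*=B_R$ with $|B_R|=|\Omega|$. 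Indeed, for a radially decreasing $w$ the identity $\mu_w(w(r))=\omega_n r^n$ decouples the reaction from $w$, reducing the problem to the explicit ODE $-(r^{n-1}w')' = r^{n-1}g(\omega_n r^n)$, whose unique solution has a computable profile $\mu_w$. The plan is then to upgrade Talenti's comparison $u^*\leq w$ on $B_R$ to the equality $u^*=w$ (and hence $\mu_u=\mu_w$) for every solution $u$ on $\Omega$, using the boundary data $\mu_u(0)=|\Omega|$ together with the dead-core values $\mu_u(\max u)=\alpha$ under $(H_2)$ or $\mu_u(\max u)=0$ under $(H_1)$ (both forced by the compatibility $\Delta u = 0$ on any plateau), and exploiting in an essential way that the reaction $g(\mu_u(u))$ rearranges in a manner controlled entirely by $\mu_u$.

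With $\mu_u$ intrinsic, uniqueness follows from the energy identity above. For the regularity statement, $g$ Lipschitz and $\mu_u\in[0,|\Omega|]$ yield $\Delta u\in L^\infty(\Omega)$, whence $u\in W^{2,p}(\Omega)$ for every $p<\infty$ and $u\in C^{1,\alpha}(\overline\Omega)$ by Calderón--Zygmund together with the $C^{1,1}$-character of $\partial\Omega$. Continuity of $\Delta u = -g(\mu_u(u))$ is obtained by ruling out any interior plateau of $u$ at a level $t\in(0,\max u)$: on such a plateau $\Delta u\equiv 0$ would force $g(\mu_u(t))=0$, impossible under $(H_1)$ since $g>0$, and impossible under $(H_2)$ since it would require $\mu_u(t)=\alpha=|D_u|$, incompatible with the strict inclusion $D_u\subsetneq\{u\geq t\}$ coming from continuity of $u$ and $t<\max u$; hence $\mu_u$ is continuous on $(0,\max u)$, and its left-continuity at $\max u$ matches the limit of $-g(\mu_u(u(x)))$ as $x\to\partial D_u$ with its value on $D_u$. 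Finally, the upgrade from $C^{1,\alpha}$ to $C^{1,1}(\overline\Omega)$ follows from an Alt--Phillips-type analysis of the reformulated semilinear free boundary problem \eqref{eq gen sem} for $v=\max u -u$ at its zero set $D_u$.
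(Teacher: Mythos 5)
Your overall architecture — reduce to a local semilinear Dirichlet problem, show the induced reaction $F_u(t)=g(|u\geq t|)$ is the \emph{same} for any two solutions, and then conclude by the standard monotone energy identity — is a reasonable skeleton, and the energy step is correct once one knows $F_{u_1}=F_{u_2}$. The problem is the mechanism you propose for establishing this intrinsic character, and it is a genuine gap, not a detail.

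Talenti's comparison only gives the one-sided inequality $u^*\leq w$ where $w$ solves the symmetrized radial problem on the ball $B_R$ with $|B_R|=|\Omega|$; the upgrade to $u^*=w$ (equivalently $\mu_u=\mu_w$) is simply false on a non-radial domain. The coarea/divergence identity you derive together with Cauchy--Schwarz and the isoperimetric inequality yields a first-order differential \emph{inequality} for $\mu_u$, with equality only when level sets of $u$ are concentric spheres — which does not hold for general $\Omega$. A concrete check: take $g\equiv 1$ (satisfying $(H_1)$); then \eqref{main problem grad} is the torsion problem $-\Delta u=1$. On a thin rectangle $\max u$ is arbitrarily small, while $\max w$ on the ball of equal volume stays bounded below, so $\mu_u\neq\mu_w$. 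Hence the plan to pin down $\mu_u$ (and thus $F_u$) purely from $g$ and $|\Omega|$ fails, and the energy identity cannot be triggered because you cannot conclude that two solutions induce the same reaction. The paper avoids trying to characterize $\mu_u$ at all: Theorem \ref{uniqueness} compares two solutions directly by a Krasnoselskii-type scaling argument, setting $t_0=\sup\{t:u_2\geq tu_1\}$, combining Hopf's lemma near $\partial\Omega$ (where the right-hand side is locally Lipschitz by Lemma \ref{lemma boundary regularity}) with a maximum-principle/translation argument inside to show $t_0\geq 1$, handling the dead-core case $(H_2)$ separately. To salvage your route you would need to replace symmetrization with a comparison of that flavor.

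On the regularity part, your path to $C^{1,\alpha}(\overline\Omega)$ via bounded right-hand side and Calder\'on--Zygmund is fine, and your argument for continuity of $\Delta u$ by ruling out interior plateaus at levels $t\in(0,\max u)$ matches the paper's Lemma \ref{lemma dead core}. The upgrade to $C^{1,1}$ is, however, more direct than an ``Alt--Phillips-type analysis'': since $v=\max u-u$ satisfies $\Delta v=f(v)$ with $f(t)=g(|v\leq t|)$ non-decreasing and bounded, the $C^{1,1}$ bound follows at once from Shahgholian's theorem, as in Lemma \ref{lemma regularity Shagohlian}.
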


We stress that our argument for proving uniqueness is non-variational and also applies for other elliptic operators as those considered in \cite{caffarelli2023fully} (see Section 3).\\

Our second result introduces the free boundary for \eqref{main problem grad} and describes the behavior of $u$ near $\partial D_u$.

\begin{theorem}\label{thm 0}
	Let us assume that $g$ satisfies $(H_2)$ with $\alpha>0$, then  $|D_u| = \alpha$. If, furthermore, $g'(t)\geq \frac{1}{C}$ for a.e. $t \in [\alpha, |\Omega|]$, then the solution $u$ to \eqref{main problem grad} satisfies
    \begin{equation}\label{eq upper g}
	g(|u \geq u(x)|) \leq C(\max u-u)^\frac{1}{5} \quad \mbox{in $\Omega$,}
    \end{equation} 
    and
    	\begin{equation}\label{eq lower g}
		\frac{1}{C}(\max u-u)^\frac{1}{3}\leq g(|u \geq u(x)|) \quad \mbox{in $\Omega$.}
	\end{equation} 
	
\end{theorem}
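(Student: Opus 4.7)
The plan is to reformulate the equation using the substitution $v := \max u - u \geq 0$, turning \eqref{main problem grad} into $\Delta v = g(\Phi(v))$ where $\Phi(s) := |\{v \leq s\}|$, so that $D_u = \{v = 0\}$. Writing $m(s) := \Phi(s) - \alpha$, the two sharp bounds amount to controlling $m(t)$ from below and above by appropriate powers of $t$ for small $t > 0$, since the hypothesis $g'(r) \geq 1/C$ on $[\alpha, |\Omega|]$ together with $g(\alpha) = 0$ gives $g(\Phi(v)) \asymp m(v)$.

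For the identification $|D_u| = \alpha$, I use the continuity of $\Delta v$ granted by Theorem~\ref{thm 1}. At almost every $x_0 \in D_u$ the $C^{1,1}$ regularity of $v$ gives the existence of $D^2 v(x_0)$; since $v \geq v(x_0) = 0$ and $D_u$ has Lebesgue density one at $x_0$, every eigenvalue of $D^2 v(x_0)$ must vanish, so $\Delta v(x_0) = 0$. By continuity, $\Delta v \equiv 0$ on $D_u$; on the other hand, the equation directly gives $\Delta v \equiv g(|D_u|)$ on $D_u$. Thus $g(|D_u|) = 0$, and by $(H_2)$ we conclude $|D_u| = \alpha$. (That $|D_u| > 0$ needed for this argument follows from the integral identity below applied near $t = 0$, which rules out $|D_u| < \alpha$ by sign considerations.)

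The central analytical tool is the identity obtained by integrating $\Delta v = g(\Phi(v))$ over $\{v \leq t\}$ via the divergence theorem and coarea:
\begin{equation*}
H(t) := \int_{\{v=t\}} |\nabla v| \, d\H \;=\; \int_\alpha^{\Phi(t)} g(r)\, dr,
\end{equation*}
which, together with the Lipschitz bounds on $g$ near $\alpha$, yields $m(t)^2/C \leq H(t) \leq C\, m(t)^2$. For the \emph{lower} bound \eqref{eq lower g}, I combine this with the Cauchy--Schwarz inequality $P(t)^2 \leq H(t)\, \Phi'(t)$ on level sets (with $P(t) := \H(\{v = t\})$ and $\Phi'(t) = \int_{\{v=t\}} |\nabla v|^{-1}\, d\H$), and the isoperimetric lower bound $P(t) \geq c_n \Phi(t)^{(n-1)/n} \geq c_n \alpha^{(n-1)/n} > 0$. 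This gives $m(t)^2\, m'(t) \geq c$, i.e.\ $(m^3)'(t) \geq 3c$, so $m(t) \geq c'\, t^{1/3}$, yielding \eqref{eq lower g}.

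The upper bound \eqref{eq upper g} is the main obstacle. The natural extra input is the pointwise $C^{1,1}$ estimate $|\nabla v|^2 \leq 2\,\|D^2 v\|_\infty\, v$ (obtained by expanding $v$ around its nearest zero in $D_u$), which gives $|\nabla v| \leq C\sqrt{t}$ on $\{v = t\}$ and hence $P(t) \geq H(t)/(C\sqrt{t}) \geq c\, m(t)^2/\sqrt{t}$. Inserting this into the Cauchy--Schwarz inequality above yields $m(t)^2 \leq C\, t\, m'(t)$, which unfortunately integrates only to a logarithmic bound on $m$, not to a power law. To reach the power estimate $m(t) \leq C\, t^{1/5}$, one needs a second independent differential inequality relating $m$, $m'$, and $t$: I would look for this through a Pohozaev-type identity obtained by multiplying the equation by $v^k$ or $x \cdot \nabla v$ and integrating, or via a dyadic iteration of the $C^{1,1}$ bound at progressively smaller scales around $\partial D_u$. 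The $1/5$ exponent should then emerge as the specific interpolation of the gradient bound with the integral identity; its suboptimality relative to the conjectured $1/3$ (which would match the radial case, see Corollary~\ref{corollary radial}) reflects the absence of an a priori control on the perimeter $P(t)$ of the level sets as $t \to 0^+$.
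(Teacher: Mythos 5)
Your treatment of the first two assertions is essentially the paper's. For $|D_u|=\alpha$ you invoke $D^2 v=0$ at density points of $D_u$, which is the same Stampacchia-type observation the paper uses in Lemma~\ref{lemma dead core}; the detail about why $|D_u|>0$ is handled differently (the paper uses a $W^{2,p}$-viscosity test at the max, you use a sign argument on the level-set identity), but both routes work. For the lower bound \eqref{eq lower g}, your derivation --- Cauchy--Schwarz on level sets, $P(t)^2\leq H(t)\,\Phi'(t)$, combined with the isoperimetric lower bound $P(t)\geq c_n\alpha^{(n-1)/n}$ and the divergence-theorem identity $H(t)=\int_{\alpha}^{\Phi(t)}g$ --- is precisely the paper's Lemma~\ref{lemma nondegeneracy}.

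The genuine gap is the upper bound \eqref{eq upper g}, and you identify correctly that your level-set ODE in $t$ alone will not close: $m(t)^2\leq Ct\,m'(t)$ only gives logarithmic decay of $m$ near $0$. But neither of the tools you gesture at is what the paper uses, and I don't believe either works as stated. The mechanism in the paper is local and measure-theoretic, not a Pohozaev identity, and not just a rescaling of the pointwise $C^{1,1}$ bound. Concretely, the three ingredients are:
\begin{itemize}
\item A Dirichlet-type estimate on truncated partial derivatives (Lemma~\ref{lemma caffarelli gradient bound}): for each $i$ and each small $h,r>0$,
\begin{equation*}
\int_{\{0\leq |v_i|\leq h\}\cap B_r(x_0)} |\nabla v_i|^2\,dx \leq C\,h\,r^{n-1}.
\end{equation*}
This is proved by integrating by parts against the truncated difference quotients $(T_{e,t}v)_h$ and crucially uses the monotonicity of $g$ so that $\Delta(T_{e,t}v)\cdot(T_{e,t}v)_h\geq 0$. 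This is the step that has no analogue in your sketch.
\item A layer-cake argument (Lemma~\ref{lemma first OM estimate}): applying the layer-cake representation to $m_r(s):=|\{0<v<s\}\cap B_r|$ squared, together with $\Delta v\geq m_r(v)/C$, gives
\begin{equation*}
|\{0<v<h\}\cap B_r|^3\leq C\int_{\{0<v<h\}\cap B_r}(\Delta v)^2\,dx.
\end{equation*}
\item The gradient bound $|\nabla v|\leq Cv^{\gamma/2}$ (your observation with $\gamma=1$ is the first case) used only to produce the containment $\{v<h^{2/\gamma}\}\subset\bigcap_i\{|v_i|<Ch\}$, so that the right-hand side above can be dominated via the first bullet by $C\,h\,r^{n-1}$. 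Covering $\{v\leq h_0\}$ by finitely many balls then yields $|\{0<v<t\}|\leq Ct^{\gamma/6}$, i.e.\ $\Delta v\leq Cv^{\gamma/6}$.
\end{itemize}
The $1/5$ exponent then comes from iterating the improved supersolution bound: with $\gamma_k-1=\gamma_{k-1}/6$ and $\gamma_0=1$, the fixed point is $\gamma=6/5$, i.e.\ $\Delta v\leq Cv^{1/5}$. So the answer is not a second differential inequality in $t$; it is a local energy estimate in balls, whose right-hand side is controlled linearly in the truncation height $h$, combined with the layer-cake cubing trick. Without the truncated-Dirichlet estimate from Lemma~\ref{lemma caffarelli gradient bound}, the iteration you would need has no starting point, and the exponent $1/5$ has no way of appearing.
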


The solutions to \eqref{main problem grad} are expected to belong to $C^{2,1}(\Omega)$ (in the absence of critical points outside the dead core). This translates into cubic detachment of solutions from their free boundary. In this regard, the inequality \eqref{eq lower g} is sharp and corresponds to the non-degeneracy of the solutions near their free boundary (see Section \ref{sec: fb}), while \eqref{eq upper g} is expected to be suboptimal. A relevant corollary of Theorem \ref{thm 1}, supporting the previous observation, is the complete resolution of \eqref{main problem grad} when $\Omega$ is radially symmetric. In this case, we are also able to show that the best possible regularity for solutions is $C^{2,1}$ (up to the free boundary).

\begin{corollary}\label{corollary radial}
	Let $g$ be a nondecreasing, smooth function satisfying $(H_2)$ with $\alpha >0$ and such that $g'(t)\geq \frac{1}{C}$ for $t \in [\alpha, |\Omega|]$. If $\Omega$ is radially symmetric, then $u$ is radially symmetric and its deadcore $D_u$ is a homothety of $\Omega$ with measure $\alpha$.\\
    Furthermore, $u\in C^{2,1}(\Omega)$,
    \begin{equation}\label{eq optimal det}
       \frac{1}{C} \text{dist}(x, N_u)^3\leq  \max u -u \leq C \text{dist}(x, N_u)^3,
    \end{equation}
and
\begin{equation}\label{eq AP comparison}
           \frac{1}{C}(\max u-u)^\frac{1}{3} \leq g(|u \geq u(x)|) \leq C(\max u-u)^\frac{1}{3}.
        \end{equation}
\end{corollary}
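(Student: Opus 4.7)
The plan is to combine the uniqueness result of Theorem \ref{thm 1} with an explicit ODE computation available in the radial setting. Since both $\Omega=B_R$ and the nonlocal right-hand side of \eqref{main problem grad} are invariant under rotations, uniqueness (Theorem \ref{thm 1}, applicable because $g$ is nondecreasing) forces the solution $u$ to be radial, $u(x)=U(|x|)$. Theorem \ref{thm 0} already gives $|D_u|=\alpha$, and to identify $D_u$ with the ball $B_{r_0}$ of radius satisfying $|B_1|r_0^n=\alpha$, I would exploit the sign information on $g$ (namely $g\leq 0$ on $[0,\alpha)$ and $g'\geq 1/C$ on $[\alpha,|\Omega|]$, so $g>0$ strictly above $\alpha$) together with the radial form of the equation: a strict interior local minimum of $U$ at some $r_*\in(0,R)$ would force $\mu(r_*):=|\{y:U(|y|)\geq U(r_*)\}|\leq\alpha$ and yield a sign contradiction upon integrating the ODE across $r_*$. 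Hence $U$ is non-increasing and $D_u$ is the closed ball $B_{r_0}$, which is a homothety of $\Omega$.

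In the annulus $r_0<r<R$ the function $U$ is strictly decreasing, so the superlevel set $\{y:U(|y|)\geq U(r)\}$ coincides with $B_r$, of volume $|B_1|r^n$. Consequently $U$ solves
\[ -\bigl(r^{n-1}U'(r)\bigr)' = r^{n-1}\,g\bigl(|B_1|r^n\bigr),\qquad U(r_0)=\max u,\;\; U'(r_0)=0,\;\; U(R)=0, \]
and a first integration yields
\[ -U'(r) = r^{1-n}\int_{r_0}^{r} s^{n-1}\,g\bigl(|B_1|s^n\bigr)\,ds. \]
Using $g(\alpha)=g(|B_1|r_0^n)=0$ together with the two-sided bound $1/C\leq g'(t)\leq C$ on $[\alpha,|\Omega|]$, one has $g(|B_1|s^n)\sim (s-r_0)$ on $[r_0,R]$; plugging this into the formula above and integrating once more gives
\[ \frac{1}{C}(r-r_0)^2\leq -U'(r)\leq C(r-r_0)^2,\qquad \frac{1}{C}(r-r_0)^3\leq \max u-U(r)\leq C(r-r_0)^3. \]
Since $N_u=\partial D_u$ is the sphere of radius $r_0$, so that $\dist(x,N_u)=|x|-r_0$ outside $D_u$ and both sides vanish inside, this is precisely \eqref{eq optimal det}; estimate \eqref{eq AP comparison} follows at once since $g(|B_1|r^n)\sim (r-r_0)\sim (\max u-U(r))^{1/3}$.

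For the $C^{2,1}$ regularity, $U$ is smooth on $\{r>r_0\}$ by ODE theory and smoothness of $g$, and constant on $[0,r_0]$; matching across $r_0$, the equation together with $U'(r_0)=0$ and $g(\alpha)=0$ forces $U''(r_0)=0$, while differentiating the ODE and evaluating at $r_0^+$ gives $U'''(r_0^+)=-n|B_1|r_0^{n-1}g'(\alpha)\neq 0$. Thus $U\in C^{2,1}$ across the free boundary but not $C^3$, showing that cubic detachment is the sharp rate. The main technical step I expect is the argument that $D_u$ is a single ball centered at the origin (rather than, say, a union of concentric annuli); once that is in hand, everything else reduces to elementary one-dimensional estimates.
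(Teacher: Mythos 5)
Your proposal is essentially a correct reproduction of the paper's Proposition~\ref{corollary radial symmetry lip}, which covers the case $\Omega=B_R$. However, Corollary~\ref{corollary radial} asserts the conclusion for every radially symmetric $\Omega$, which includes annuli, and the paper's actual proof of the corollary is devoted entirely to the annulus case (it reduces to Proposition~\ref{corollary radial symmetry lip} for the ball). Your argument never addresses the annulus, and the strategy you propose does not transfer to it.

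Here is why the annulus is genuinely harder. When $\Omega=B_{R_2}\setminus\overline{B_{R_1}}$, the dead core is a concentric annulus $\overline{B_{r_2(\alpha)}}\setminus B_{r_1(\alpha)}$, and a level set $\{u\ge t\}$ for $t$ slightly below $\max u$ is an annulus $\{a(t)\le |x|\le b(t)\}$ with $a(t)\le r_1(\alpha)$ and $b(t)\ge r_2(\alpha)$. The quantity $|u\ge u(x)|-\alpha$ therefore aggregates the volumes of \emph{two} shells, one on each side of $N_u$, and the widths $r_1(\alpha)-a(t)$ and $b(t)-r_2(\alpha)$ are not a priori comparable — the radial ODE on $[R_1,r_1(\alpha)]$ is coupled to the one on $[r_2(\alpha),R_2]$ through the level-set measure. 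Your step ``$g(|B_1|s^n)\sim(s-r_0)$'' relies on the ball identity $|u\ge u(x)|=\omega_n|x|^n$, which is exactly what fails in the annulus. As a result, the direct ODE integration still yields the lower (non-degeneracy) bound in \eqref{eq optimal det}--\eqref{eq AP comparison}, because one shell already contributes at least $c\,\dist(x,N_u)$, but it does \emph{not} yield the upper bound, because the opposite shell might be much larger. The paper resolves this with a bootstrap (Steps 2--4 of the proof): from $\max u-u\le C\dist^{2+\beta}$ one uses the non-degeneracy to control \emph{both} shell widths, obtaining $g(|u\ge u(x)|)\le C\dist^{(2+\beta)/3}$, re-integrates the ODE to improve to $\max u-u\le C\dist^{2+(2+\beta)/3}$, and iterates with $\beta_k=(2+\beta_{k-1})/3\uparrow 1$, tracking the constants to ensure they stay bounded. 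That iteration is the missing idea in your proposal.

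A minor remark on the ball case: your justification that $D_u$ is a concentric ball (``a strict interior local minimum of $U$ at $r_*$ would force $\mu(r_*)\le\alpha$'') is not quite right as written, since $|u\ge t|\ge\alpha$ always. The clean argument, used in the paper, is that $-\Delta u=g(|u\ge u(x)|)\ge 0$ (because $|u\ge u(x)|\ge\alpha$ and $g\ge 0$ on $[\alpha,|\Omega|]$), so $u$ is superharmonic and its radial profile cannot have an interior local minimum. Your computation of $U'''(r_0^+)=-n\omega_n r_0^{n-1}g'(\alpha)\ne 0$ is correct and indeed shows that $C^{2,1}$ is optimal.
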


\medskip

In the previous result, the nonlinearity $g$ is assumed to be smooth simply to highlight the sharp regularity of a solution to \eqref{main problem grad} near its free boundary.\\

In free boundary problems, it is often the case that the finite perimeter of the free boundary follows from the optimal regularity of solutions (among other properties); in this case they are in fact somewhat equivalent.

\begin{theorem}\label{thm equivalence}
Assume that $g$ satisfies $(H_2)$ with $\alpha>0$. If $g(|\max u -u \leq t|)  \leq Ct^{\frac{1}{3}}$, then $\H(FB(u)) < \infty$.\\

Conversely, if there exists a sequence of real numbers $\{t_k\}$ converging to $0$ such that $\mathcal{H}^{n-1}(u = \max u - t_k) \leq C$, then $g(| \max u - u \leq t|) \leq Ct^{\frac{1}{3}}$.
\end{theorem}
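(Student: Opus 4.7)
\medskip
\noindent
\emph{Proof plan.}
The plan is to reduce both directions to a single integral identity. Set $v := \max u - u$; by Theorem \ref{thm 1}, $v \in C^{1,1}(\overline{\Omega})$ and satisfies $\Delta v = g(\mu(v))$ classically in $\Omega$, where $\mu(t) := |\{v \leq t\}|$. Applying the divergence theorem on $\{v \leq t\}$, and using that $g(\mu(v))$ is constant on each level set of $v$, yields
\[
\int_{\{v = t\}} |\nabla v|\, d\H = \int_{\{v \leq t\}} g(\mu(v))\, dx = G(\mu(t)) - G(\alpha),
\]
where $G(s) := \int_0^s g(r)\,dr$. Since $g$ is Lipschitz with $g(\alpha) = 0$ and $g' \geq 1/C$ on $[\alpha, |\Omega|]$, one has $G(\mu(t)) - G(\alpha) \asymp (\mu(t) - \alpha)^2 \asymp g(\mu(t))^2$; this comparison will translate each hypothesis into an estimate for $\int_{\{v = t\}} |\nabla v|\, d\H$.

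For the forward direction, assume $g(\mu(t)) \leq C t^{1/3}$. Combined with the non-degeneracy \eqref{eq lower g}, this gives $\tfrac{1}{C} v^{1/3} \leq \Delta v \leq C v^{1/3}$ on $\{v > 0\}$, placing $v$ in the Alt-Phillips regime \eqref{eq alt phillips} with $\gamma = 4/3$. I would then invoke the general finite $\H$-measure result for the semilinear problem \eqref{eq gen sem} promised at the end of the introduction, whose hypotheses are met by $v$ under these comparability assumptions on the right-hand side, to conclude $\H(FB(u)) < \infty$.

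For the reverse direction, from $\H(\{v = t_k\}) \leq C$ and the $L^1$-convergence $\{v \leq t_k\} \to D_u$, lower semicontinuity of the BV perimeter gives $P(D_u, \Omega) \leq C$, so $FB(u)$ has finite perimeter. The non-degeneracy \eqref{eq lower g}, compared against radial Alt-Phillips barriers on balls strictly inside $\{v > 0\}$, yields the cubic bound $v(x) \geq \tfrac{1}{C}\dist(x, FB(u))^3$, so the layer $\{0 < v \leq t\}$ is contained in $\{0 < \dist(\cdot, FB(u)) \leq C t^{1/3}\}$. Two-sided density estimates on $D_u$ and $\{v > 0\}$ (a consequence of the non-degeneracy together with the $C^{1,1}$ bound of Theorem \ref{thm 1}) render $FB(u)$ Ahlfors regular, hence $(n-1)$-rectifiable, so its Minkowski content agrees with $\H(FB(u))$. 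A Minkowski-type tube estimate then gives $|\{\dist(\cdot, FB(u)) \leq r\}| \leq C r$ for $r$ small; applied with $r = C t^{1/3}$, this forces $\mu(t) - \alpha \leq C t^{1/3}$, and the Lipschitz bound $g(\mu(t)) \leq L(\mu(t) - \alpha)$ concludes.

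The main obstacle I anticipate is the tube estimate in the reverse direction: mere finite perimeter does not imply a quantitative Minkowski bound. My plan to overcome this is to exploit the density estimates flowing from the non-degeneracy \eqref{eq lower g} and the $C^{1,1}$ regularity of Theorem \ref{thm 1}, which yield Ahlfors regularity of $FB(u)$ and thereby convert the perimeter information into the desired tubular volume bound.
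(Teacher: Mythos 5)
Both directions of your plan rest on ingredients that are not actually available under the stated hypotheses, and in each case the paper's own proof takes a different, self-contained route.

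In the forward direction you propose to invoke Theorem \ref{thm conditional}, but that theorem requires $(A_2)$: $\lim_{t\to 0^+} tf(t)/F(t) = \omega \in [1,2)$. The two-sided bound $\tfrac{1}{C}t^{1/3}\le f(t)\le Ct^{1/3}$ only pins this ratio between constants; it does not force the limit to exist, since $f$ may oscillate within the allowed envelope. This is exactly why the paper lists $(A_2)$ as a separate extra hypothesis in Corollary \ref{corollary grad cond}, rather than deducing it. The paper's proof of Theorem \ref{thm equivalence} is instead a direct contradiction argument: setting $c(t) = \H(\{v=t\})$, it derives $f^3(t) \geq C\int_0^t c(s)^2\,ds$ from the coarea/Cauchy--Schwarz chain of Lemma \ref{lemma nondegeneracy}, derives $f^3(t) \leq Ctc(t)$ from the pointwise bounds $|\nabla v|\le Cv^{\gamma/2}$ (Lemma \ref{lemma grad bound 0}) and $|D^2 v|\le Cv^{\gamma-1}$ (Lemma \ref{shahgholian bound}, the ACF monotonicity bound), and concludes via Jensen's inequality that $\liminf_{t\to 0}c(t) < \infty$.

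In the reverse direction the density-estimate step is circular. Non-degeneracy (the lower bound $v\ge \tfrac{1}{C}\dist^3$) gives density of the positivity set, but a density estimate on the dead core $D_u$ at free-boundary points requires the matching upper bound $v\le C\dist(\cdot,FB(u))^3$, which would come precisely from the inequality $f(t)\le Ct^{1/3}$ you are trying to prove; the $C^{1,1}$ estimate of Theorem \ref{thm 1} only gives $v \le C\dist^2$, which is too weak. Without two-sided density, neither Ahlfors regularity nor the Minkowski tube estimate $|\{\dist(\cdot,FB)\le r\}|\le Cr$ follows, and lower semicontinuity of perimeter bounds only $\H(\partial^* D_u)$, not the topological boundary. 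The paper avoids all of this by a bootstrap that never passes through Minkowski content: starting from the suboptimal $f(t)\le Ct^{1/5}$ of Theorem \ref{theorem suboptimal supersol}, the hypothesis $\H(\{v=a_k\})\le C$ is fed into $f^3(t)\le f^3(a_k)\le \int_{\{v=a_k\}} |\nabla v|\,|D^2 v|\,d\H \le Ca_k^{3\gamma/2-1}$ (again using Lemmas \ref{lemma grad bound 0} and \ref{shahgholian bound}), improving the exponent at each step toward its fixed point $1/3$.
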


Finally, we provide sufficient conditions on $f$ so that solutions to a semilinear equation of the form $\Delta v = f(v)$ always develop free boundaries with a finite  $\mathcal{H}^{n-1}$ measure. We achieve this by transforming the problem into a degenerate one-phase problem like those studied by De Silva and Savin in \cite{de2021certain} and for that purpose, we require (essentially) that the right-hand side of the equation does not oscillate too much. More precisely, given $f: \R_+\to \R$ continuous and strictly increasing, we consider the functions $F(t) = \int_{0} ^ {t} f(s) ds$, and 
\begin{equation}\label{eq transform}
    h(t) = \int_{0} ^ {t} \frac{1}{\sqrt{F(s)}} ds,
\end{equation} 
and the following set of hypotheses:
\begin{enumerate}
    \item [$(A_1)$:] $h(t) < \infty$ for $t\geq 0$. 
    \item [$(A_2)$:] $\lim_{t \rightarrow 0^{+}} \frac{tf(t)}{F(t)} = \omega \in [1,2)$
    \item [$(A_3)$:] $f(t) \leq t^{\epsilon}$ for some $\epsilon > 0$.
\end{enumerate}
\begin{theorem}\label{thm conditional}
Consider the problem 
\begin{equation}\label{eq semilinear 0}
\begin{cases}
    \Delta v = f(v) \qquad \text{ in } \{v>0\}\\
|\nabla v| = 0 \qquad \text{ on } FB(v) := \partial \{v>0\},
\end{cases}
\end{equation}
with $f$ strictly increasing, continuous, $f(0) = 0$, and satisfying $(A_1)-(A_3)$.\\ 

Then, $\H(FB(v) \cap B) < \infty$ for any ball $B$.
\end{theorem}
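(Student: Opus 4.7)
The plan is to reduce \eqref{eq semilinear 0} to a degenerate one-phase free boundary problem with a constant-gradient boundary condition, of the type treated by De Silva and Savin in \cite{de2021certain}, which yields locally finite $\H$ measure for the free boundary. The reduction is carried out by the change of variable $w := h(v)$, with $h$ as in \eqref{eq transform}. Assumption $(A_1)$ ensures $h$ is a strictly increasing continuous bijection on $[0, \max v]$, so that $w$ is continuous, vanishes exactly on $FB(v)$, and $FB(w) = FB(v)$ as sets; it therefore suffices to prove $\H(FB(w) \cap B) < \infty$. Using $h'(t) = 1/\sqrt{F(t)}$, $h''(t) = -f(t)/(2F(t)^{3/2})$, and $|\nabla v|^2 = F(v)|\nabla w|^2$, a direct computation yields
\begin{equation*}
    \Delta w = \phi(v)\Bigl(1 - \tfrac{1}{2}|\nabla w|^2\Bigr) \quad \text{in } \{w > 0\}, \qquad \phi(v) := \frac{f(v)}{\sqrt{F(v)}}.
\end{equation*}

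The natural free boundary condition for $w$ is $|\nabla w| = \sqrt{2}$, equivalent to $|\nabla v|^2 \to 2F(v)$ as $v \to 0^+$. This is a Modica-type identity for semilinear equations: the $P$-function $P(v) := |\nabla v|^2 - 2F(v)$ vanishes on $FB(v)$ by the boundary conditions, and a comparison argument based on the structure of the equation forces the pointwise limit. Assumption $(A_2)$ then supplies the asymptotics needed to place the problem in the De Silva--Savin framework: integrating $(\log F)' = f/F \sim \omega/t$ yields $F(t) \sim c t^\omega$ and $f(t) \sim c \omega t^{\omega-1}$ as $t \to 0^+$, so $\phi(v) \sim c' v^{\omega/2 - 1}$ is tamely singular (exponent in $[-1/2, 0)$), and this singularity is compensated on $FB(w)$ by the vanishing factor $1 - |\nabla w|^2/2$. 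Hypothesis $(A_3)$ bounds $v$ uniformly (via comparison with solutions to $\Delta v = v^\epsilon$), and hence bounds $\phi$ uniformly away from the free boundary. With these ingredients, $w$ is a viscosity solution of a constant-gradient one-phase problem in the class of \cite{de2021certain}, and the finite $\H$ measure of $FB(w)$ transfers back to $FB(v)$ via the bi-Lipschitz homeomorphism $h$ on compact subsets of $(0, \max v]$.

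The main obstacle is rigorously matching the equation for $w$ to the precise hypotheses of \cite{de2021certain}. This requires quantifying the delicate balance between the singular coefficient $\phi(v)$ and the vanishing factor $1 - |\nabla w|^2/2$ at the free boundary, obtaining uniform gradient estimates for $w$ up to $FB(w)$ despite the blow-up of $\phi$, and interpreting the boundary condition $|\nabla w| = \sqrt{2}$ in an appropriate viscosity sense, likely via a blow-up analysis that exploits the regular variation of $F$ implied by $(A_2)$. The three assumptions are calibrated exactly for these tasks: $(A_1)$ so that $h$ is well-defined, $(A_2)$ to extract the regular variation of $F$ and $f$, and $(A_3)$ to keep $v$ and the reaction term bounded.
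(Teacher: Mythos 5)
Your overall strategy is the same as the paper's: transform $v$ to $w=h(v)$ with $h$ as in \eqref{eq transform}, recognize that $w$ solves a degenerate one-phase problem of De Silva--Savin type, and use that framework to get local $\H^{n-1}$-finiteness of $FB(w)=FB(v)$. The computation $\Delta w = a(x)\big(\tfrac{2-|\nabla w|^2}{w}\big)$ with $a(x)=\tfrac{f(v)h(v)}{2\sqrt{F(v)}}$ (equivalently, your $\phi(v)(1-\tfrac12|\nabla w|^2)$) and the observation that $(A_2)$ makes $a(x)$ bounded (via L'H\^opital, Lemma \ref{lemma cond conseq}) are both correct.

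However, the part you defer (``rigorously matching the equation for $w$ to the precise hypotheses of \cite{de2021certain}'') is not bookkeeping but the bulk of the argument, and some of your guesses about how to close it go wrong. Concretely: \textbf{(i)} The viscosity free boundary condition $|\nabla w|=\sqrt 2$ on $FB(w)$ is not obtained by a Modica/$P$-function argument in the paper. One cannot just assert that $P(v)=|\nabla v|^2-2F(v)$ vanishes on $FB(v)$ ``by the boundary conditions'' --- both terms vanish separately and the statement one actually needs is a rate of approach; the paper instead constructs explicit radial barriers $\overline v_r,\ \underline v_r$ from the one-dimensional profile $U=h^{-1}(\sqrt2\,\cdot)$ (Lemma \ref{eq radial barriers}) and compares (Proposition \ref{prop viscosity sol}). \textbf{(ii)} The finiteness statement does not follow by citing a De Silva--Savin theorem as a black box. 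The paper stresses that the right-hand side here is less regular than in \cite{de2021certain}, which is precisely why $(A_2)$--$(A_3)$ are introduced. After the Lipschitz bound for $w$ (which does follow from \cite[Theorem 3.6]{de2021certain} once $a$ is bounded), one still needs the quantitative improvement $|\nabla w|^2-2\le w^{2-2s}$ near $FB(w)$ (Lemma \ref{lemma supergradient bound 3000}), the integrability $\int_{\{w>0\}}\tfrac{|\nabla w|^2-2}{w}\le C$ (Lemma \ref{lemma integrability w-1}), non-degeneracy and uniform density of $\{w>0\}$, and finally a covering argument; none of these appear in your proposal. \textbf{(iii)} You misattribute the role of $(A_3)$: it is not used to ``bound $v$ uniformly'' (boundedness of $v$ comes from the domain and boundary data) nor to bound $\phi$ away from $FB$. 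Its actual role is internal to the improved gradient bound, where $F^{1-s}\le C v^{(1+\epsilon)(1-s)}$ is needed to make the quantity $B(0)$ positive in the maximum-principle computation of Lemma \ref{lemma supergradient bound 3000}. \textbf{(iv)} A small point: $(A_2)$ only gives regular variation $F(t)=t^\omega L(t)$ with $L$ slowly varying, not $F(t)\sim ct^\omega$; fortunately the paper only uses the L'H\^opital limit for $a(x)$, so this does not matter, but your stronger claim is not justified.

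So: right reduction, right philosophy, but the specific lemmas that carry the weight --- the barrier argument for the viscosity condition, the improved gradient estimate where $(A_3)$ is actually used, and the integrability-plus-covering step --- are missing, and the proposed alternative routes (Modica identity; uniform boundedness from $(A_3)$; direct citation of \cite{de2021certain}) would not go through as stated.
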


As a corollary, we obtain the optimal growth bound for the right-hand side of the Grad equation under the extra assumption $(A_2)$
\begin{corollary}\label{corollary grad cond}
Assume that $g$ satisfies the hypothesis of Theorem \ref{thm 0}. Setting $f(t) = g(|\max u - u \leq t|)$, if $f$ satisfies assumption $(A_2)$ from Theorem \ref{thm conditional}, then 
\begin{equation}
f(t) \leq Ct^{\frac{1}{3}}.    
\end{equation}
\end{corollary}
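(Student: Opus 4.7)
The plan is to apply Theorem~\ref{thm conditional} to $v:=\max u - u$, which solves \eqref{eq semilinear 0} with nonlinearity $f(t)=g(|\max u - u \leq t|)$, and then to feed the outcome into the converse direction of Theorem~\ref{thm equivalence}. First I would verify that $f$ fulfills assumptions $(A_1)$--$(A_3)$. Assumption $(A_2)$ is the hypothesis of the corollary. For $(A_1)$, the lower bound \eqref{eq lower g} from Theorem~\ref{thm 0} gives $f(s)\geq \frac{1}{C}s^{1/3}$, whence $F(s)\geq c\,s^{4/3}$ and $1/\sqrt{F(s)}\leq Cs^{-2/3}$ is integrable at the origin. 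For $(A_3)$, the upper bound \eqref{eq upper g} gives $f(t)\leq Ct^{1/5}$, so $f(t)\leq t^{\e}$ for any $\e<1/5$ and all sufficiently small $t$.

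Theorem~\ref{thm conditional} then yields $\H(FB(v)\cap B)<\infty$ for every ball $B$, and in particular $\H(\pa D_u)<\infty$. To invoke the converse of Theorem~\ref{thm equivalence}, however, one needs the stronger fact that there is a sequence $t_k\to 0^+$ along which $\H(\{v=t_k\})\leq C$ uniformly. I would extract such a sequence by revisiting the inner mechanism of Theorem~\ref{thm conditional}: the change of variables $\rho=h(v)$ is Lipschitz since the Hopf-type inequality $|\nabla v|^{2}\leq 2F(v)$ gives $|\nabla\rho|\leq\sqrt{2}$, level sets correspond via $\{v=t\}=\{\rho=h(t)\}$, and the De~Silva--Savin analysis of the degenerate equation satisfied by $\rho$ (see \cite{de2021certain}) produces uniform $\H$-bounds on the nearby level sets $\{\rho=s\}$ as $s\to 0^+$. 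Translating back one obtains uniform $\H$-control along any sequence $t_k\to 0^+$, and the converse direction of Theorem~\ref{thm equivalence} then delivers $f(t)\leq Ct^{1/3}$.

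The main obstacle is precisely this bridge between the $\H$-finiteness of the free boundary and a uniform $\H$-bound on level sets. A naive co-area argument based only on the $C^{1,1}$ bound $|\nabla v|\leq M$ or on the suboptimal estimate $f(t)\leq Ct^{1/5}$ fails, because the $T^{1/3}$ tubular shrinking dictated by non-degeneracy is too slow to force $\frac{1}{T}\int_{0}^{T}\H(\{v=t\})\,dt$ to stay bounded as $T\to 0^+$. One must genuinely exploit the sharper regularity of the Lipschitz profile $\rho$ produced under assumption $(A_2)$, which is exactly what drives the proof of Theorem~\ref{thm conditional}.
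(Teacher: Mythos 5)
Your route is genuinely different from the paper's, but it has a gap that you yourself identify and do not close. You propose to apply Theorem \ref{thm conditional} (whose hypotheses $(A_1)$--$(A_3)$ you verify correctly from \eqref{eq lower g} and \eqref{eq upper g}) to conclude $\H\big(FB(v)\cap B\big)<\infty$, and then to invoke the converse direction of Theorem \ref{thm equivalence}. But that converse requires a sequence $t_k\to 0$ along which $\H\big(\{v=t_k\}\big)\leq C$, not merely finiteness of the $\H$-measure of $FB(v)=\partial\{v>0\}$; as you observe, the two are not immediately equivalent. Your remark that one should ``revisit the inner mechanism'' of Theorem \ref{thm conditional} and exploit the Lipschitz profile $w=h(v)$ points in the right direction, but it stops short of an argument. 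To close the gap you would need to extract, from the proof of Theorem \ref{thm conditional}, the intermediate estimate $|\{\theta<w<2\theta\}\cap B_1|\leq C\theta$; combined with the uniform Lipschitz bound on $w$ (Lemma \ref{lemma gradient estimate}) and the coarea formula this gives $\int_\theta^{2\theta}\H(\{w=s\})\,ds\leq C\theta$, so one can pick $s_\theta\in(\theta,2\theta)$ with $\H(\{w=s_\theta\})\leq C$, and the level sets $\{v=h^{-1}(s_\theta)\}=\{w=s_\theta\}$ furnish the desired sequence. Without this step, the proposal does not prove the corollary.

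For the record, the paper's own proof of Corollary \ref{corollary grad cond} does not pass through Theorem \ref{thm equivalence} or the conclusion of Theorem \ref{thm conditional} at all: it is a self-contained iterative argument in the spirit of Theorem \ref{theorem suboptimal supersol}. One tests the equation against $\min\{\phi(v),\theta\}$ (with $\phi=h$), integrates by parts, and uses the coarea formula together with the nondegeneracy from Lemma \ref{lemma nondegeneracy}, the suboptimal upper bound from Theorem \ref{theorem suboptimal supersol}, and hypothesis $(A_2)$ to obtain $|\{0\leq v<\theta\}|\leq C\theta^{1/3}+C\theta^{\beta(\gamma)}$ with $\beta(\gamma)$ a strictly improving exponent, which is then bootstrapped up to $f(t)\leq Ct^{1/3}$. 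Your modular plan, once the missing bridge is made explicit, would give an alternative proof and would arguably be cleaner by reusing the two theorems as black boxes; the paper's direct route avoids having to re-expose an estimate that is internal to the proof of Theorem \ref{thm conditional}.
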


\medskip

\subsection{Remarks on Theorem \ref{thm conditional}}  In the absence of scale invariances, the analysis of free boundaries for solutions to semilinear equations is substantially more challenging than in benchmark problems such as the Alt-Phillips equation \eqref{eq alt phillips}. In order to circumvent this challenge, in the proof of Theorem \ref{thm conditional} we transform \eqref{eq semilinear 0} into a general one-phase problem using the transformation $w=h(v)$ with $h$ given by \eqref{eq transform}.  The function $h$, which is also strictly monotone, appears naturally in the analysis of the one dimensional version of \eqref{eq reversed problem}, since $U(t) = h^{-1}(\sqrt{2}t)$ solves
 \begin{equation}\label{eq one dimensional}
     \begin{cases}
         U''(t) = f\big(U(t)\big),\\
         U(0)=0.
     \end{cases}
 \end{equation}
Indeed, if we multiply both sides of the equation in \eqref{eq one dimensional} by $U'$, we deduce from the fundamental theorem of calculus that $(U')^2= 2 F(U)$ or, equivalently, $\big(h(U)'\big)^2=2$, from where we can see that our choice of $U$ satisfies \eqref{eq one dimensional}. More fundamentally, the finiteness of $h$, i.e., assumption $(A_1)$ is a necessary and sufficient condition to the existence of free boundaries for semilinear equations \cite[Theorem 1.1.1]{pucci2007maximum}. Moreover, the problem satisfied by $w$ takes the form
\begin{equation}\label{eq one phase}
\begin{cases}
        \Delta w(x) = a(x)\Bigg(\frac{2-|\nabla w(x)|^2}{w}\Bigg), \quad \mbox{in $\{w >0\}$}\\
        |\nabla w|^2=2, \hspace{3.1cm} \mbox{on $\partial \{w >0\}$},
\end{cases}
\end{equation}
where $a(x) =  \frac{f(v(x)) h(v(x))}{2\sqrt{F}(v(x))}$. This equation is similar to those treated by De Silva and Savin in \cite{de2021certain}, but with a less regular right hand side that introduces some technical challenges, which we are able to address by introducing the hypotheses $(A_2)-(A_3)$.\\ 

Up to the best of our knowledge, Theorem \ref{thm conditional} is the first result that guarantees finiteness of the perimeter for free boundaries of semilinear equations in the absence of scale invariances. An important consequence of this result is that, thanks to De Giorgi structure theorem for sets of finite perimeter, such free boundaries always have points with an almost tangent plane or, in the jargon of free boundary problems, ``flat points''. This, coupled with the recent results in \cite{allen2024free}, shows that for a broad class of semilinear free boundary problems, any solution $v$ satisfies that the reduced boundary $\partial^* \{v>0\}$  is smooth.\\

Examples of functions satisfying the hypotheses of Theorem \ref{thm conditional} and those in \cite{allen2024free} include
\begin{itemize}
    \item $f(t) = t^{\gamma}(2 + \sin(t))$
    \item $f(t) = -t^{\gamma}\log(t)$
    \item $f(t) = t^{\gamma}e^{-t}$
\end{itemize}
where $\gamma \in (0,1)$.
\subsection{Organization of the paper.} Section \ref{section def and pre} is devoted to gathering previous results from the literature and to unify them in a simplified framework. In Section \ref{sec: uniqueness}, we prove Theorem \ref{thm 1} and Corollary \ref{corollary radial}. Finally, in Section \ref{sec: fb}  we introduce the free boundary associated with \ref{main problem grad} and develop our analysis of the free boundary, proving Theorem \ref{thm 0}, Theorem \ref{thm equivalence}, Theorem \ref{thm conditional}, and Corollary \ref{corollary grad cond}. \\

\medskip

\noindent {\textbf Acknowledgments:} We thank Ignacio Tomasetti for his collaboration on a preliminary stage of this project. Author Antonio Farah wishes to thank his advisors Luis Caffarelli and Irene Gamba for their continued support and encouragement. Author Luis Caffarelli was partially supported by NSF Grant DMS 2000041 and NSF Grant DMS 2108682. Author Antonio Farah was partially supported by NSF Grant DMS 1840314 and NSF Grant DMS 2009736.

\section{Preliminary Results}\label{section def and pre}

We start by unifying different notions of solutions for \eqref{main problem grad}. We recall that the existence of weak solutions for \eqref{main problem grad} was obtained in \cite{mt}, later generalized in \cite{caffarelli2023fully} for fully nonlinear operators in the context of $W^{2,p}$ viscosity solutions. For the convenience of the reader, we recall the latter notion of solution introduced in \cite{caffarelli1996viscosity}.

	\begin{definition}\label{viscositydefinition}
	Let $f\in L^{p}(\Omega)$ and $\psi\in W^{2,p}(\Omega)$ for $p>n$. Let $u:\Omega\longrightarrow\mathbb{R}$ be a continuous function. We say that $u$ is a 						\textbf{$W^{2,p}$-viscosity subsolution} of 
		\begin{equation}\label{viscosityequation}
		-\Delta u =f(x)
		\end{equation} 
	in $\Omega$, if $u\leq 0$ on $\partial\Omega$ and the following holds: for all $\varphi\in W^{2,p}(\Omega)$ such that $u-\varphi$ has a local maximum at $x_0\in \Omega$, then 
	$$ess\limsup_{x\rightarrow x_0}\Delta \varphi +f(x)\geq 0.$$
	In the same way, we define supersolutions. We say that $u$ is a \textbf{$W^{2,p}$-viscosity supersolution} of \eqref{viscosityequation} in $\Omega$, if $u\geq 0$ on $\partial\Omega$ 	and the following holds: for all $\varphi\in W^{2,p}(\Omega)$ such that $u-\varphi$ has a local minimum at $x_0\in \Omega$ then 
	$$ess\liminf_{x\rightarrow x_0} \Delta \varphi+f(x)\leq 0.$$
	We will call $u$ a \textbf{$W^{2,p}$-viscosity solution} if it is both a subsolution and a supersolution.
	\end{definition}

We note that thanks to \cite[Lemma 2.6]{caffarelli1996viscosity} and \cite[Corollary 3.7]{caffarelli1996viscosity}, the notion of  $W^{2,p}$-viscosity solutions and strong solutions coincide for $W^{2,p}$ functions. In the following lemma, we show the existence of dead cores under assumption $(H_2)$ and, in general, the continuity of the right-hand side in \eqref{main problem grad}, which, in turn, implies that solutions to \eqref{main problem grad} are in fact standard $C$-viscosity solutions (like those studied in \cite{caffacabre}).
\begin{lemma}\label{lemma dead core}
     Let $u$ be a $W^{2,p}$-viscosity solution to \eqref{main problem grad}. If $g$ satisfies $(H_1)$ or $(H_2)$, then $ t\to g(|u\geq t|) $ is continuous in $[0, \max u]$. Furthermore, under hypothesis $(H_2)$, $|u=\max u|=\alpha$.
\end{lemma}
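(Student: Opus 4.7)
The plan is to reduce continuity of $t \mapsto g(|u\geq t|)$ to continuity of the distribution function $\mu(t) := |u\geq t|$, since $g$ is Lipschitz. By monotone convergence of the closed superlevel sets along sequences $t_n \uparrow t$, $\mu$ is non-increasing and left-continuous on $\R$; in particular its only possible discontinuity at $t$ is a jump of size $\mu(t) - \mu(t^{+}) = |u = t|$. So the task reduces to ruling out $|u=t|>0$ for $t \in [0,\max u)$ and identifying $|u = \max u|$ in the setting of $(H_2)$. The main tool is Stampacchia's theorem: since $u \in W^{2,p}$, we have $D^2 u = 0$ almost everywhere on every level set $\{u = c\}$; evaluating the equation $-\Delta u(x) = g(\mu(u(x)))$ on $\{u = c\}$ (where $\mu(u)$ collapses to the constant $\mu(c)$) forces $g(\mu(c)) = 0$ whenever $|u=c| > 0$. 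Under $(H_1)$ this is impossible because $g > 0$, yielding continuity of $\mu$ throughout $[0, \max u]$ immediately. Under $(H_2)$ it forces $\mu(c) = \alpha$ at every level $c$ with $|u = c| > 0$.

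Next I would compute $|u = \max u|$ under $(H_2)$. The case $\alpha = 0$ is direct: the Stampacchia alternative applied at $c = \max u$ combined with uniqueness of the zero of $g$ gives $|u = \max u| = \alpha = 0$. The interesting case is $\alpha > 0$, in which hypothesis $(H_2)$ forces $g(0) < 0$ strictly (otherwise $0$ would be a second zero). Arguing by contradiction, assume $|u = \max u| = 0$ and pick an interior maximizer $x_0$. Continuity of $u$ together with $\mu(t) \to |u = \max u| = 0$ as $t \uparrow \max u$ allows one to choose $r>0$ so small that, by continuity of $g$ at $0$, $g(\mu(u(x))) \leq g(0)/2 < 0$ throughout $B_r(x_0)$. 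This means $\Delta u \geq -g(0)/2 > 0$ on $B_r(x_0)$, so $u$ is strictly subharmonic in a neighborhood of its interior maximum, contradicting the strong maximum principle for $W^{2,p}$ functions. Hence $|u = \max u| > 0$, and then Stampacchia at $c = \max u$ yields $|u = \max u| = \mu(\max u) = \alpha$.

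Finally I would exclude jumps at interior levels $t_* \in (0, \max u)$ under $(H_2)$: a jump would give $\mu(t_*) = \alpha = \mu(\max u)$ by Stampacchia, and monotonicity of $\mu$ would then force $\mu \equiv \alpha$ on $[t_*, \max u]$; consequently $\mu(t_*^{+}) = \mu(t_*)$, ruling out the jump. A parallel argument at $t = 0$ (using $u \geq 0$ in $\Omega$, which follows from a one-sided maximum principle applied on $\{u < 0\}$, so that $\mu(0) = |\Omega| > \alpha$) excludes a jump there as well. Together these points give continuity of $\mu$, and hence of $g \circ \mu$, on $[0, \max u]$. The principal obstacle is the strong-maximum-principle step that establishes $|u = \max u| > 0$ when $\alpha > 0$; once the positive-measure dead core is secured, the monotonicity of $\mu$ combined with uniqueness of the zero of $g$ forces all remaining jumps to disappear automatically.
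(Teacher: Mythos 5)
Your proof is correct and rests on the same foundational observation as the paper: Stampacchia's theorem forces $D^2 u = 0$ a.e.\ on any level set of positive measure, so evaluating the equation there yields $g(|u\geq c|)=0$, hence (under $(H_2)$) $|u\geq c|=\alpha$. The real difference is how you and the paper establish the lower bound $|u=\max u|\geq \alpha$ when $\alpha>0$. The paper tests the $W^{2,p}$-viscosity \emph{subsolution} inequality directly at an interior maximizer $x_0$ with the constant test function $\varphi\equiv \max u$; this gives $\operatorname{ess\,limsup}_{x\to x_0} g(|u\geq u(x)|)\geq 0$, and left-continuity of $t\mapsto |u\geq t|$ at $\max u$ identifies the limit as $g(|u=\max u|)$, so $g(|u=\max u|)\geq 0$ directly. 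You instead argue by contradiction: if $|u=\max u|=0$, then $|u\geq u(x)|\to 0$ uniformly as $x\to x_0$, hence $g(0)<0$ forces $\Delta u\geq -g(0)/2>0$ in a small ball around the interior maximizer, contradicting the strong maximum principle for $W^{2,p}$ subharmonic functions. Both routes use the interior maximizer plus left-continuity of the distribution function; the paper's is a one-line application of the viscosity subsolution definition, while yours trades that for constructing a positive-measure neighborhood on which the equation is strictly subharmonic and invoking the classical mean-value inequality. Your version has the pedagogical advantage of not touching the viscosity test machinery, at the cost of slightly heavier classical-PDE input.

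Two small remarks. First, your exclusion of an interior jump at $t_*\in(0,\max u)$ via the squeeze $\mu(t_*)=\alpha=\mu(\max u)$ together with monotonicity of $\mu$ is cleaner than what the paper writes (the paper only sketches that $g(|u\geq t|)>0$ for $t<\max u$), and is worth keeping. Second, for the jump at $t=0$, you appeal to $u\geq 0$ via a one-sided maximum principle on $\{u<0\}$; this does hold, but it requires a separate argument (Stampacchia at the interior minimum plus $g(|\Omega|)>0$). A slightly more economical route, closer in spirit to the paper's, is to note that the set $\{0<u<\max u\}$ is nonempty and open (by continuity and connectedness of $\Omega$, since $u=0$ on $\partial\Omega$ and $u=\max u$ at an interior point), so it has positive measure and hence $\mu(0)>\alpha$, which with Stampacchia rules out $|\{u=0\}\cap\Omega|>0$ without needing to show $u\geq 0$ first.
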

\begin{proof}
    Let us first notice that if $u \in W^{2,p}(\Omega)$ and $-\Delta u >0$ a.e. in $\{u=t\}$ for some $t \in \R$, then $|u=t|=0$. Indeed, if $|u=t|>0$, by a theorem of Stampacchia (see \cite[Remark 3.1]{caffarelli2023fully})  $D^2 u =0$ a.e. in $\{u=t\}$, implying that $f(|u\geq t|)=-\Delta u = 0$ a.e. in $\{u=t\}$, since $u$ is a strong solution of \eqref{main problem grad}. From this observation, we readily deduce that $t\to |u\geq t|$ is continuous in $[0, \max u]$ under hypothesis $(H_1)$.\\
	
	If instead $u$ is a solution of \eqref{main problem grad} under hypothesis $(H_2)$, we claim that $|u \geq \max u|\geq \alpha$. If $\alpha=0$, this claim is trivial. We thus assume $\alpha>0$. Since $u$ is a $W^{2,p}$-viscosity solution for $p>n$ (and therefore continuous) and is equal to zero on $\partial \Omega$, $u$ attains its  maximum at a certain point $x_{0} \in \Omega$. Hence, by setting $\varphi\equiv u_{max}$ in Definition \ref{viscositydefinition}, we have that
	\begin{equation*}\label{eq limit}
		0\leq ess\limsup_{x\rightarrow x_0}\Delta \varphi(x))+g(|u\geq  u(x)|)=g(|u\geq  u(x_0)|),
	\end{equation*}
	where the rightmost equality is justified by the continuity of $g$, $u$, and the continuity from the left of $|u \geq  t|$ at $\max u$. Indeed, let $t_n \nearrow \max u=u(x_0)$. Then, the continuity properties of Lebesgue measure imply $|u= \max u| = \left| \bigcap_{n\in \N} \{u\geq t_n\}\right|=\lim_{n\to \infty} |u\geq t_n|$.\\
    
    On the other hand, since $\{u= \max u\}$ has positive measure, we have that $D^2 u =0$ a.e. in $\{u= \max u\}$, therefore $0= -\Delta u(x) = g(|u\geq  u(x)|)$ a.e. in $\{u = \max u\}$. So, thanks to hypothesis $(H_2)$, we deduce that $|u\geq  u(x)| = \alpha$.\\

    By the previous analysis, under $(H_2)$ we have that $g(|u\geq t|)>0$ for $t \in [0, \max u)$. So, by our previous arguments, it is clear that $t\to |u\geq t|$ is continuous on $[0,\max u )$, finishing the proof.        
\end{proof}

\begin{corollary}\label{corollary cvisco}
     Let $u$ be a $W^{2,p}$-viscosity solution of \eqref{main problem grad}. Then, $u$ is a $C$-viscosity solution of the same problem.
\end{corollary}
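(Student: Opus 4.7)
The plan is to combine Lemma \ref{lemma dead core} with the standard fact that the $W^{2,p}$-viscosity and $C$-viscosity notions coincide whenever the right-hand side of the equation is a continuous function of $x$.

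First, I would verify that $x\mapsto g(|u\geq u(x)|)$ is continuous in $\Omega$. Since $u$ is a $W^{2,p}$-viscosity solution with $p>n$, by Morrey's embedding $u\in C(\overline{\Omega})$. Lemma \ref{lemma dead core} establishes that $t\mapsto g(|u\geq t|)$ is continuous on $[0,\max u]$ under either $(H_1)$ or $(H_2)$. Composing these two continuous maps shows that the right-hand side $f(x):=g(|u\geq u(x)|)$ belongs to $C(\Omega)$.

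Next, I would invoke the consistency between the two notions of viscosity solution when the data is continuous. Concretely: take any $\varphi\in C^2(\Omega)$ (in particular $\varphi\in W^{2,p}(\Omega)$) such that $u-\varphi$ attains a local maximum at $x_0$. By the definition of $W^{2,p}$-viscosity subsolution,
\begin{equation*}
\operatorname*{ess\,lim\,sup}_{x\to x_0}\bigl(\Delta\varphi(x)+f(x)\bigr)\geq 0.
\end{equation*}
Because $\varphi\in C^2$ and $f\in C(\Omega)$, the quantity inside is continuous at $x_0$, so the essential limsup equals the pointwise value, yielding $\Delta\varphi(x_0)+f(x_0)\geq 0$. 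The analogous argument applies at local minima with $C^2$ test functions to get the supersolution inequality. This is exactly the $C$-viscosity condition.

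No step is really hard here; the ``main obstacle'' is purely conceptual, namely ensuring that the right-hand side, which a priori is defined only through the nonlocal quantity $|u\geq u(x)|$, is genuinely continuous in $x$. This is precisely what Lemma \ref{lemma dead core} provides (with the use of Stampacchia's theorem to rule out jumps coming from plateaus, and with the observation that under $(H_2)$ the plateau at the maximum has the prescribed measure $\alpha$ so that $g$ vanishes there consistently). Once continuity of $f$ is in hand, the reduction from $W^{2,p}$-viscosity to $C$-viscosity is the classical argument from \cite[Section 2]{caffarelli1996viscosity}.
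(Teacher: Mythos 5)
Your proof follows the same route as the paper's: invoke Lemma \ref{lemma dead core} to get continuity of the right-hand side, then observe that any $C^2$ test function is also a $W^{2,p}$ test function and that the essential limsup/liminf reduces to the pointwise value once the data is continuous. The only difference is that you spell out the reduction a bit more explicitly, which is fine.
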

\begin{proof}
In virtue of Lemma \ref{lemma dead core}, the right-hand side of \eqref{main problem grad} is continuous. Then, by testing with any $\varphi \in C^2$ in the definition of $W^{2,p}$, we retrieve the usual $C$-viscosity condition.
\end{proof}

The next lemma will allow us to apply directly the classical maximum principle in our context.
\begin{lemma}\label{lemma to avoid viscosity definition}
	Let $u_i\in W^{2,p}(O)$ be $W^{2,p}$-viscosity solutions for $-\Delta u=f_i$ in an open set $O$ for $i=1,2$ and with $f_1$ and $f_2$ continuous in $O$. If $u_1-u_2$ attains a local maximum (minimum) at $x_0$, then $f_1(x_0)\geq f_2(x_0)$ ($f_1(x_0)\leq f_2(x_0)$). 
\end{lemma}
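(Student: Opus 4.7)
The plan is a direct application of the $W^{2,p}$-viscosity definition, using $u_2$ itself (which lies in $W^{2,p}(O)$) as an admissible test function for $u_1$. The local extremum of $u_1 - u_2$ at $x_0$ is exactly the contact condition appearing in Definition \ref{viscositydefinition}, so the lemma amounts to unpacking the definition and identifying the essential $\limsup$/$\liminf$ of $\Delta u_2$ at $x_0$ with $-f_2(x_0)$.

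First, I would invoke the observation made right after Definition \ref{viscositydefinition} (via Lemma 2.6 and Corollary 3.7 of \cite{caffarelli1996viscosity}) to conclude that $u_2$, being a $W^{2,p}$-viscosity solution that belongs to $W^{2,p}(O)$, is in fact a strong solution. In particular $-\Delta u_2 = f_2$ a.e.\ in $O$, so by continuity of $f_2$,
\[
\mathrm{ess}\,\limsup_{x\to x_0}\Delta u_2(x)\;=\;\mathrm{ess}\,\liminf_{x\to x_0}\Delta u_2(x)\;=\;-f_2(x_0).
\]
For the local maximum case, I then take $\varphi := u_2$ in the subsolution part of Definition \ref{viscositydefinition} applied to $u_1$, which yields
\[
\mathrm{ess}\,\limsup_{x\to x_0}\bigl(\Delta u_2(x) + f_1(x)\bigr)\;\geq\;0.
\]
Continuity of $f_1$ lets me pull $f_1(x_0)$ outside the limit, and combining with the previous identity gives $f_1(x_0) \geq f_2(x_0)$. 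The local minimum case is handled identically, using the supersolution part of the definition applied to $u_1$ with the same $\varphi := u_2$, which produces $\mathrm{ess}\,\liminf_{x\to x_0}(\Delta u_2 + f_1)\le 0$ and hence $f_1(x_0) \leq f_2(x_0)$.

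There is no substantial obstacle here; what one really needs to verify is (i) that $u_2$ is a legitimate test function, which is automatic from $u_2 \in W^{2,p}(O)$, and (ii) that the essential $\limsup$/$\liminf$ collapse to pointwise values at $x_0$, which is immediate from the continuity of $f_1, f_2$ and the fact that $u_2$ is a strong solution. This streamlined formulation is precisely what is needed in later sections so that the classical maximum principle can be invoked directly against solutions of \eqref{main problem grad}.
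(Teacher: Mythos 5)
Your proposal is correct and takes essentially the same approach as the paper's proof: use $u_2 \in W^{2,p}(O)$ as an admissible test function for $u_1$ at the contact point $x_0$, combine this with the fact that $u_2$ is a strong solution (so $\Delta u_2 = -f_2$ a.e.), and use the continuity of $f_1, f_2$ to collapse the essential $\limsup$/$\liminf$ to the pointwise value, yielding $f_1(x_0) - f_2(x_0) \ge 0$ (resp.\ $\le 0$).
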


\begin{proof}
	Since $u_i \in W^{2,p}(O)$, $-\Delta  u_i=f_i$ a.e. in $O$, for $i=1,2$. By Definition \ref{viscositydefinition}, we can use $u_2$ as a $W^{2,p}$ viscosity test for $u_1$ which is, in particular, a viscosity subsolution of $-\Delta u_1 = f_1$. Then, we deduce from the continuity of $f_1$ and $f_2$ that
	
	$$0\leq ess\limsup_{x\rightarrow x_0}-\Delta u_2(x)+f_1(x)=-f_2(x_0)+f_1(x_0).$$	
	
	The case when $u_1-u_2$ attains a local minimum is analogous.
\end{proof}	

From the existence results in \cite{mt} and \cite{caffarelli2023fully}, we obtain for free the $W^{2,p}$ regularity of solutions for any $p\geq 1$ and, by Morrey's embedding, $C^{1,\alpha}$ for all $\alpha \in (0,1)$. This regularity can be improved (again for free) if we assume monotonicity on $g$ by invoking the main result in \cite{shahgholian2003c1}.

\begin{lemma}\label{lemma regularity Shagohlian}
Let $u$ be a solution to \eqref{main problem grad}. If we assume $g$ to be non-decreasing, then $u \in C^{1,1}(\Omega)$.
\end{lemma}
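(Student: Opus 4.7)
The plan is to reduce \eqref{main problem grad}, which is a priori a non-local equation, to a standard local semilinear equation $\Delta u = H(u)$ with $H$ continuous and non-decreasing, and then invoke the $C^{1,1}$-regularity theorem of \cite{shahgholian2003c1} directly.

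The central observation is that monotonicity of $g$ collapses the functional dependence on $u$ into a pointwise one. Indeed, define $\phi : [0, \max u] \to \R$ by $\phi(t) := g(|u \geq t|)$. Since the distribution function $t \mapsto |u \geq t|$ is non-increasing in $t$ and $g$ is non-decreasing by hypothesis, $\phi$ is non-increasing on $[0, \max u]$; and by Lemma \ref{lemma dead core}, $\phi$ is moreover continuous there. Hence \eqref{main problem grad} can be rewritten pointwise as
$$\Delta u(x) = -\phi(u(x)) =: H(u(x)),$$
with $H$ continuous and non-decreasing. After extending $H$ to all of $\R$ as a continuous non-decreasing function (trivially, by constant extension outside $[0, \max u]$, which is immaterial since $0 \leq u \leq \max u$ in $\Omega$), we land squarely in the framework of semilinear PDE with monotone right-hand side, and the main theorem of \cite{shahgholian2003c1} yields $u \in C^{1,1}_{\loc}(\Omega)$.

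I expect the only genuine subtlety to be the continuity of $\phi$: for a generic function $u$ the distribution function can have jumps, and under $(H_2)$ the solution $u$ in fact has a dead core at $t = \max u$ where level sets concentrate. Both issues are, however, precisely what Lemma \ref{lemma dead core} rules out, so once that lemma is in hand the proof reduces to a direct citation of the semilinear regularity theory, and no further analysis of the non-local structure of the equation is required.
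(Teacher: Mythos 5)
Your approach is essentially identical to the paper's: both freeze the non-local right-hand side into a pointwise monotone nonlinearity and then cite Shahgholian's $C^{1,1}$ theorem. The only cosmetic difference is that you work directly with $u$ and absorb the sign into $H(t) = -g(|u\geq t|)$, while the paper first substitutes $v=\max u - u$ so that $\Delta v = f(v)$ with $f(t)=g(|v\leq t|)$ visibly non-decreasing; the two formulations are trivially equivalent.

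One small caveat worth flagging: you invoke Lemma \ref{lemma dead core} to secure continuity of $\phi$, but that lemma is stated under hypothesis $(H_1)$ or $(H_2)$, neither of which is assumed in Lemma \ref{lemma regularity Shagohlian}. The paper sidesteps this by noting that $f$ being non-decreasing and bounded already places one in the framework of \cite[Theorem 1.1]{shahgholian2003c1}; no continuity of the nonlinearity is required there (only a one-sided Lipschitz/monotonicity control, which non-decreasingness supplies for free, together with boundedness). So your continuity detour is a harmless redundancy rather than a genuine gap, but it is cleaner to drop it and apply Shahgholian's theorem directly from monotonicity and boundedness of $H$, as the paper does.
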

\begin{proof}
Notice that $v = \max u - u$ solves
\begin{equation*}
\Delta v = f(v)
\end{equation*}
with $f(t) = g(|v\leq t|)$ a non-decreasing and bounded function. Therefore, we are under the hypothesis of \cite[Theorem 1.1]{shahgholian2003c1} which provides the desired regularity.
\end{proof}

Further regularity for solutions to \eqref{main problem grad} is tightly linked with the distribution of their critical points and, up to the best of our knowledge, is unknown. Actually, it is still an open problem to determine conditions to guarantee $C^2$ regularity of solutions to the semilinear problems of the form $\Delta v = f(v)$ when $f$ is continuous. This has been accomplished for instance in \cite{koch2015partial} at regular points of $v$. In the case of solutions to \eqref{main problem grad}, the interplay between critical points and regularity is more transparent, as we show in the following result.
\begin{lemma}\label{lemma boundary regularity}
	Let $u$ be a Lipschitz function and let $r_1< r_2$ such that $\mathcal{H}^{n-1}(\{u=t\})\leq C_1$ for $t \in [r_1,r_2]$ and such that $|\nabla u|\geq C_2$ in $\{r_1\leq u \leq r_2\}$. Then, $h(t)=|u\geq t|$ is a Lipschitz function in $[r_1,r_2]$. Moreover, if $u \in C^{k,\alpha}$ for $k\in \N$ and $\alpha \in [0,1]$, then $h(t)\in C^{k-1,\alpha}([r_1,r_2])$.  In particular, if $\Omega$ is a $C^{1,1}$ domain and if $u$ is a solution of \eqref{main problem grad}, then, the function $x\to |u\geq u(x)|$ is Lipschitz in a $\delta$-neighborhood of $\partial \Omega$. 
\end{lemma}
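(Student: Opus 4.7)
My plan is to base everything on the coarea formula. On $\{r_1 \leq u \leq r_2\}$ the function $u$ is Lipschitz with $|\nabla u| \geq C_2 > 0$, so for every $t \in [r_1, r_2]$ one has the identity
\begin{equation*}
h(t) - h(r_2) \;=\; \bigl|\{t \leq u < r_2\}\bigr| \;=\; \int_t^{r_2} \phi(s)\, ds, \qquad \phi(s) := \int_{\{u=s\}} \frac{1}{|\nabla u|}\, d\H.
\end{equation*}
The assumptions $\H(\{u=s\}) \leq C_1$ and $|\nabla u| \geq C_2$ then yield $|\phi(s)| \leq C_1/C_2$, and hence $h$ is Lipschitz on $[r_1,r_2]$ with this constant.

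For the higher regularity statement, I would straighten the foliation $\{u = \mathrm{const}\}$ locally. Since $u \in C^{k,\alpha}$ and $\nabla u \neq 0$, the implicit function theorem produces, at each point of $\{r_1 \leq u \leq r_2\}$, a $C^{k,\alpha}$ diffeomorphism $\Phi(x) = (u(x), y(x))$ onto a product neighborhood $(a,b) \times W$, where the complementary coordinates $y$ are furnished by the implicit function theorem. Pulling back the coarea factor $|\nabla u|^{-1}\, d\H$ through $\Phi$ yields $J(s,y)\, dy$ with $J$ of class $C^{k-1,\alpha}$, because both $|\nabla u|^{-1}$ and the Jacobian of $\Phi^{-1}$ are $C^{k-1,\alpha}$. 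Using compactness of $\{r_1 \leq u \leq r_2\}$ together with a subordinate partition of unity, $\phi(s)$ becomes a finite sum of integrals of $C^{k-1,\alpha}$ functions over fixed domains, so differentiation under the integral gives $\phi \in C^{k-1,\alpha}([r_1,r_2])$. As an antiderivative of $-\phi$, the function $h$ then belongs to $C^{k-1,\alpha}([r_1,r_2])$.

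For the boundary application, the $C^{1,1}$ regularity of $\partial \Omega$ combined with Hopf's lemma yields the gradient bound near $\partial \Omega$: indeed $u$ vanishes on $\partial \Omega$ and the right hand side of \eqref{main problem grad} is strictly positive there (under $(H_1)$ this is automatic; under $(H_2)$ one notes that $|u \geq u(x)|$ is close to $|\Omega| > \alpha$ as $u(x) \to 0$, while $g > 0$ on $(\alpha, |\Omega|]$ by continuity and uniqueness of its zero). Hopf then provides $|\partial_\nu u| \geq C_2 > 0$ on $\partial \Omega$, and since $u \in C^{1,\alpha}(\overline{\Omega})$, this lower bound extends by continuity to a tubular neighborhood $\{\dist(\cdot,\partial\Omega) < \delta\}$. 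In that neighborhood, the implicit function theorem realizes each level set $\{u = t\}$ as a $C^{1,\alpha}$ graph over $\partial \Omega$ with uniformly bounded area, so $\H(\{u = t\}) \leq C_1$. The first part of the lemma then makes $h$ Lipschitz on a small interval $[0,t_0]$, and composing with the Lipschitz function $u$ produces the claimed Lipschitz regularity of $x \mapsto |u \geq u(x)|$ near $\partial \Omega$.

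The main obstacle is the higher regularity claim: one must verify that the local straightenings glue together consistently (so that overlapping charts contribute coherently to $\phi$) and that differentiating under the integral stays genuinely within the $C^{k-1,\alpha}$ class, taking into account that the measure of the level sets of $u$ in a given chart varies with $s$. The Lipschitz statement and the boundary application are comparatively routine once the coarea identity above is established.
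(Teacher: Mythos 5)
Your proposal is correct and follows the paper's outline for the Lipschitz step (the coarea identity with the $C_1/C_2$ bound on $\phi$ is exactly the paper's argument) but diverges in two places. For the higher regularity of $h'$, the paper simply cites an external reference asserting that $h'$ inherits the regularity of $\nabla u$, whereas you sketch a direct proof via the implicit function theorem, local straightening, a partition of unity, and differentiation under the integral sign; this is a legitimate, more self-contained route, though you are right that the gluing of charts and the dependence of integration domains on $s$ need to be handled carefully. For the boundary application, the paper bounds $\mathcal{H}^{n-1}(\{u=t\})$ by the divergence theorem on $\Omega_t = \Omega\setminus\{u\geq t\}$, which yields
\begin{equation*}
\mathcal{H}^{n-1}(\{u=t\})\leq \frac{1}{c}\int_{\{u=t\}}|\nabla u|\,d\mathcal{H}^{n-1} = \frac{1}{c}\left(-\int_{\partial\Omega}\partial_\nu u\,d\mathcal{H}^{n-1}-\int_{\Omega_t}\Delta u\right)\leq C,
\end{equation*}
while you instead realize each level set $\{u=t\}$ as a $C^{1,\alpha}$ graph over $\partial\Omega$ via the implicit function theorem. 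Both establish the uniform $\mathcal{H}^{n-1}$-bound; the divergence-theorem route is slicker and needs only $u\in W^{2,1}$, whereas yours is more geometric and requires a small amount of localization to make the graph representation rigorous. The use of Hopf's lemma plus $C^{1,\alpha}$ continuity to propagate the gradient lower bound into a tubular neighborhood, and the final composition of the Lipschitz $h$ with the Lipschitz $u$, match the paper.
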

\begin{proof}	
	By the coarea formula, if $t,s\in [r_1,r_2]$ with $s>t$	
	$$|u\geq t|-|u\geq s| =\int_{\{s> u\geq t\}} \frac{|\nabla u(x)|}{|\nabla u(x)|}dx= \int_{t}^s \int_{\{u=r\}} \frac{1}{|\nabla u(x)|}d\mathcal{H}^{n-1}(x)dr\leq |s-t| \frac{C_1}{C_2}.$$
Moreover, 
\begin{equation*}
    h'(t) = - \int_{\{u=t\}} \frac{1}{|\nabla u(x)|}d\mathcal{H}^{n-1}(x)
\end{equation*}
for almost every $t\in [r_1,r_2]$.\\

If  $u \in C^{k,\alpha}$, we can invoke the main result in \cite{l}, which asserts that $h'$ is as regular as $\nabla u$, i.e., $C^{k-1,\alpha}$.\\

If $u$ solves \eqref{main problem grad}, we can combine Hopf's boundary lemma and the $C^{1,\alpha}$ regularity of $u$ to conclude that $|\nabla u|\geq c$ in some $\delta$ neighborhood of $\delta \Omega$, $I_{\delta}(\partial \Omega)$ containing $\{0\leq u \leq r_1\}$ for some $r_1>0$.  The 		regular value theorem guarantees that $\Omega_t = \Omega\setminus \{u\geq t\}$ is a $C^\alpha$ domain for $t\in (0,r_1]$. Applying the divergence theorem on these sets, we deduce that 
	$$ \int_{\Omega_t} -\Delta u = -\int_{\{u=t\}} |\nabla u|d\mathcal{H}^{n-1}-\int_{\partial\Omega} \frac{\partial u}{\partial \nu}d\mathcal{H}^{n-1}.$$
	Hence,
	$$\mathcal{H}^{n-1}(\{u=t\})\leq \left\Vert  \frac{\partial u}{\partial \nu}\right\Vert_{L^\infty(\partial \Omega)}\frac{1}{C}  \mathcal{H}^{n-1}({\partial\Omega})+C\Vert u \Vert_{W^{2,1}(\Omega)}.$$
\end{proof}

Assuming $g$ is smooth, the previous lemma, combined with standard elliptic regularity, provides smoothness for $u$ and $t\to |u\geq t|$, as long as $u$ does not have critical points. In particular, this is always the case near $\partial \Omega$. An interesting question that we leave as an open problem is the analyticity of solutions under these circumstances (assuming analyticity of $g$ as well).\\

\begin{lemma}\label{lemma abs cont}
    Let $u$ be a solution of \eqref{main problem grad} with $g$ satisfying either $(H_1)$ or $(H_2)$. Then, $h(t) = |u\geq t|$ is absolutely continuous and a.e. $t \in (0, \max u)$
    \begin{equation}\label{eq der reaction term}
        h'(t) = - \int_{\{u=t\}} \frac{1}{|\nabla u|} d\mathcal{H}^{n-1}.
    \end{equation}
\end{lemma}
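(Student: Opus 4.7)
The plan is to derive both claims from the coarea formula applied to $u$, which by the regularity recalled above is Lipschitz (in fact $u \in C^{1,\alpha}(\Omega)$ for every $\alpha \in (0,1)$). The main obstacle is controlling the contribution of the critical set $\{\nabla u = 0\}$ inside the range $\{0 < u < \max u\}$; once this is ruled out, the formula is a direct consequence of coarea.

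First, for $0 < s < t < \max u$, Lemma \ref{lemma dead core} gives $|\{u = c\}| = 0$ for every $c \in (0, \max u)$, and therefore $h(s) - h(t) = |\{s \leq u < t\}|$. Applying the coarea formula to $\chi_{\{s \leq u < t\}}/|\nabla u|$, extended by zero on the critical set, yields
\begin{equation*}
|\{s \leq u < t\} \cap \{|\nabla u| > 0\}| = \int_{s}^{t} \sigma(r)\,dr, \qquad \sigma(r) := \int_{\{u = r\}} \frac{1}{|\nabla u|}\,d\H.
\end{equation*}
Hence the whole statement reduces to the auxiliary claim
\begin{equation*}
\bigl|\{0 < u < \max u\} \cap \{\nabla u = 0\}\bigr| = 0. \tag{$\star$}
\end{equation*}

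To establish $(\star)$, I would reuse the Stampacchia-type trick from the proof of Lemma \ref{lemma dead core}, now applied componentwise to $\nabla u \in W^{1,p}(\Omega;\R^n)$: this forces $D^2 u = 0$, and hence $\Delta u = 0$, almost everywhere on $\{\nabla u = 0\}$. The equation then gives $g(|u \geq u(x)|) = 0$ a.e. on this set. Under $(H_1)$ the positivity of $g$ is an immediate contradiction, so $|\{\nabla u = 0\}| = 0$. Under $(H_2)$, $g$ vanishes on $[0, |\Omega|)$ only at $\alpha = |D_u|$, so the identity above would force $|u \geq u(x)| = \alpha$; however for every $t < \max u$ the set $\{t < u < \max u\}$ is open and nonempty (by IVT along any continuous path in $\overline{\Omega}$ connecting $\partial\Omega$ to a point of $D_u$), so $|u \geq t| > \alpha$ strictly, contradicting $|u \geq u(x)| = \alpha$ on a set of positive measure and yielding $(\star)$.

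Combining the pieces gives $h(s) - h(t) = \int_{s}^{t} \sigma(r)\,dr$ for all $0 < s < t < \max u$. Letting $s \to 0^{+}$ and $t \to (\max u)^{-}$ one sees that $\sigma \in L^{1}(0, \max u)$ (the left-hand side is at most $|\Omega|$), so $h$ is absolutely continuous on $[0, \max u)$, and Lebesgue differentiation then gives $h'(r) = -\sigma(r)$ for a.e. $r \in (0, \max u)$, which is exactly \eqref{eq der reaction term}. The main technical step is really $(\star)$ — and, within it, the strict inequality $|u \geq t| > \alpha$ for every $t < \max u$ under hypothesis $(H_2)$.
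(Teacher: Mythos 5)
Your proof is correct and follows essentially the same route as the paper: establish that $\{\nabla u = 0\}\cap\{0<u<\max u\}$ is Lebesgue-null via the Stampacchia trick (so $\Delta u = 0$ a.e.\ there, forcing $g(|u\geq u(x)|)=0$, which is incompatible with $0<u(x)<\max u$), and then apply the coarea formula and differentiate. The one step the paper spells out that you handle via the extension-by-zero convention is a second coarea application,
\begin{equation*}
0 = \int_{\{\nabla u=0\}\cap\{0<u<\max u\}}|\nabla u|\,dx = \int_0^{\max u}\H\big(\{\nabla u=0\}\cap\{u=t\}\big)\,dt,
\end{equation*}
which gives $\H(\{\nabla u=0\}\cap\{u=t\})=0$ for a.e.\ $t$, so that the level-set integral over $\{u=t\}$ in \eqref{eq der reaction term} agrees with the one over $\{u=t\}\cap\{\nabla u\neq 0\}$ that coarea actually produces; with that remark your argument and the paper's coincide.
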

\begin{proof}
    Since $g(|u \geq t|) >0$ for $t \in (0, \max u)$, we have that $| \{\nabla u =0\} \cap \{0<u<\max u\}|=0$, otherwise we would have $0=-\Delta u =f(u) >0$ in $\{\nabla u =0\} \cap \{0<u<\max u\}$. Thus, by the coarea formula, for $0<s<t<\max u$
    \begin{equation*}
        |s\leq u \leq t| = |\{s\leq u \leq t\} \cap \{\nabla u \neq 0\}| = \int_{s}^{t}\int_{\{u=\kappa \} \cap \{\nabla u \neq 0\}} \frac{1}{|\nabla u|} d\mathcal{H}^{n-1}d\kappa.
    \end{equation*}

    Another (very classical) application of the coarea formula yields
    \begin{equation*}
      0=  \int_{\{\nabla u=0\}\cap \{0<u<\max u\}} |\nabla u| = \int_0^{\max u} \mathcal{H}^{n-1}\big(\{\nabla u=0\}\cap \{ u=t\}\big) dt,
    \end{equation*}
    implying that $\mathcal{H}^{n-1}\big(\{\nabla u=0\}\cap \{ u=t\}\big)=0$ for $t$ a.e. in $(0, \max u)$, which concludes the derivation of \eqref{eq der reaction term}.
\end{proof}

\section{Uniqueness and comparison principles}\label{sec: uniqueness}

In this section, we prove the uniqueness of solutions of \eqref{main problem grad}. Strictly speaking, the uniqueness result holds under very mild assumptions: it requires $\Omega$ to satisfy the uniform interior sphere condition and $g$ to be increasing and Lipschitz in an interval of the form $[|\Omega|-\delta, |\Omega|]$. Additionally, we can replace the Laplace operator in \eqref{main problem grad} by a convex elliptic Fully Nonlinear operator $F$  as in \cite{caffarelli2023fully} satisfying the following condition: there exists a non-increasing function $c:(0,1] \to (0,1]$ with $c(t)<1$ in $(0,t)$ such that $F(tM)\leq c(t)F(M)$ whenever $F(M)\leq 0$. This previous condition is fulfilled, for instance, if $F(t M)=t^\beta F(M)$ for any matrix $A$ and for some $\beta>0$. Thus, this uniqueness argument allows us to recover many relevant operators such as linear operators, supremums or infimums of linear operators (such as Isaacs equation), the $p$-Laplacian for any $p\in (1,\infty)$, Pucci operators, the Monge-Ampere operator and any symmetric polynomial on the eigenvalues of $M$.\\

The main idea in the following result is a variant of Krasnoselkii's method introduced in \cite{bk}.

\begin{theorem}\label{uniqueness}
	Let $\Omega$ be a domain satisfying the interior sphere condition and let $g$ be non-decreasing and Lipschitz in the set where $g\geq 0$. If, furthermore, $g$ satisfies $(H_1)$ or $(H_2)$, \eqref{main problem grad} has at most one solution.
\end{theorem}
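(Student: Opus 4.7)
The plan is a Krasnoselkii-type sliding argument adapted to the nonlocal reaction. Given two solutions $u_1, u_2$ of \eqref{main problem grad}, I would consider
\begin{equation*}
\tau^*:=\sup\{\tau\geq 0 : \tau u_1\leq u_2 \text{ in } \Omega\}
\end{equation*}
and aim to prove $\tau^*\geq 1$. The symmetric argument with $u_1$ and $u_2$ swapped would yield $\sigma^*:=\sup\{\sigma\geq 0 : \sigma u_2\leq u_1\}\geq 1$, giving $u_1\equiv u_2$. Positivity $\tau^*>0$ would combine Hopf's boundary lemma (available via the interior sphere condition, since the right-hand side of \eqref{main problem grad} is bounded below by $g(|\Omega|-\delta)>0$ in a $\delta$-neighborhood of $\partial\Omega$ under either $(H_1)$ or $(H_2)$) with the $C^{1,1}$ regularity of $u_1$ from Lemma \ref{lemma regularity Shagohlian}: these give $u_1\leq C\,\dist(\cdot,\partial\Omega)$ and $-\partial_\nu u_2\geq c>0$ on $\partial\Omega$, hence $\tau_0 u_1\leq u_2$ for some $\tau_0>0$.

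Suppose for contradiction $\tau^*<1$. The same boundary estimate gives $\tau^* u_1<u_2$ strictly near $\partial\Omega$, so any touching point $x_0$ of $\tau^* u_1$ and $u_2$ lies in the interior of $\Omega$. At $x_0$, the function $w:=u_2-\tau^* u_1\in W^{2,p}$ attains its minimum, and Lemma \ref{lemma to avoid viscosity definition} (with continuity of the right-hand side provided by Lemma \ref{lemma dead core} and Corollary \ref{corollary cvisco}) yields
\begin{equation*}
g(|u_2\geq u_2(x_0)|) \,=\, -\Delta u_2(x_0) \,\leq\, -\Delta(\tau^* u_1)(x_0) \,=\, \tau^*\, g(|u_1\geq u_1(x_0)|).
\end{equation*}
Simultaneously, $\tau^* u_1\leq u_2$ with equality at $x_0$ gives the nonlocal inclusion $\{u_1\geq u_1(x_0)\}\subseteq\{u_2\geq u_2(x_0)\}$ (if $u_1(y)\geq u_1(x_0)$ then $u_2(y)\geq\tau^* u_1(y)\geq\tau^* u_1(x_0)=u_2(x_0)$), and monotonicity of $g$ reverses the inequality on reactions. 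Chaining and using $\tau^*<1$ forces $g(|u_1\geq u_1(x_0)|)\leq 0$. Under $(H_1)$ this is an immediate contradiction and the proof concludes.

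Under $(H_2)$ the conclusion is instead $|u_1\geq u_1(x_0)|\leq\alpha$, so $x_0\in D_{u_1}$. But then $-\Delta(\tau^* u_1)(x_0)=0$, and the display above forces $g(|u_2\geq u_2(x_0)|)\leq 0$, placing $x_0$ in $D_{u_2}$ as well. This rigidifies the picture: $\tau^*\max u_1=\max u_2$ at $x_0$, and propagating $\tau^* u_1\leq u_2$ onto $D_{u_1}$ forces $D_{u_1}\subseteq D_{u_2}$, hence equality by the measure constraint $|D_{u_i}|=\alpha$ from Lemma \ref{lemma dead core}. Swapping the roles of $u_1$ and $u_2$ yields either $\sigma^*\geq 1$ (done) or, by the same rigidity, $\sigma^*\max u_2=\max u_1$; combined with $\tau^*\max u_1=\max u_2$ the latter gives $\tau^*\sigma^*=1$, incompatible with both being $<1$. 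Thus $\tau^*\geq 1$ or $\sigma^*\geq 1$, and a further Hopf-type boundary comparison exploiting the strict monotonicity of $g$ near $t=|\Omega|$ noted in the preamble promotes the weak ordering to equality.

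I expect the main obstacle to be handling the dead core case under $(H_2)$: the direct Krasnoselkii contradiction is lost on $\{g=0\}$, and one must force touching into $D_{u_1}\cap D_{u_2}$ and then exploit the rigidity (equal maxima up to the factor $\tau^*$ and coincidence of dead cores) to derive a contradiction via the symmetric sliding. The argument is non-variational and relies only on the maximum principle and monotonicity of $g$, which is precisely what enables the extension to convex fully nonlinear operators mentioned in the introduction.
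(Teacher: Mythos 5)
Your skeleton matches the paper's (Krasnoselskii sliding with $t_0=\sup\{\tau:\tau u_1\le u_2\}$, Hopf to get $t_0>0$, interior touching under $\tau^*<1$, dead-core rigidity under $(H_2)$, role-swap to force either $\tau^*\ge1$ or $\sigma^*\ge1$), and the chain of inequalities at the touching point is correct. But there are two genuine gaps.

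First, the claim ``the same boundary estimate gives $\tau^* u_1<u_2$ strictly near $\partial\Omega$'' is not justified. Applying Hopf to $u_1,u_2$ \emph{separately} only gives two-sided $\dist$-comparisons; it does not rule out that $u_2-\tau^*u_1$ has vanishing normal derivative at some boundary point when $\tau=\tau^*$ is critical, in which case there need not be an interior touching point at all. The paper handles this by showing that $w=u_2-t_0u_1$ satisfies $\Delta w - Mw \le 0$ in a boundary collar (using Lemma \ref{lemma boundary regularity} to get Lipschitz control of the rescaled reaction $f_2(s)=g(|u_2\ge s|)$ near $\partial\Omega$, so $f_2(s)+Ms$ is monotone), and then applies Hopf to the operator $\Delta-M$ acting on $w$ itself. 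This is the idea that forces the interior touching, and your proposal does not supply it.

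Second, and more seriously, the final step is a gesture, not an argument. Once you have reduced to (WLOG) $u_2\le u_1$ with $t_0u_1\le u_2$ for some $t_0\in(0,1)$, you write that ``a further Hopf-type boundary comparison exploiting the strict monotonicity of $g$ near $t=|\Omega|$ promotes the weak ordering to equality.'' That is not how the conclusion is reached, and it is not clear it can be: the weak ordering $u_2\le u_1$ does not produce a usable comparison between $|u_1\ge u_1(x)|$ and $|u_2\ge u_2(x)|$ at a general point, and nothing is happening at the boundary that would close the argument. The paper's Step~5 is a genuinely different device: having established $D_1\subseteq D_2$ with $|D_1|=|D_2|=\alpha$, they exploit $|D_2\setminus D_1|=0$ to perform a small \emph{spatial translation} of $t_0u_1$ so that its plateau pokes into $\{u_2>s\}\setminus D_2$, obtaining an interior positive maximum of $u_1^\varepsilon-u_2$ strictly inside $\{u_2<\max u_2\}$, where the Krasnoselskii contradiction (via Lemma \ref{lemma to avoid viscosity definition} and monotonicity of $g$) is again available. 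This translation/sliding step is where the dead-core rigidity actually gets used, and it is the part your proposal leaves out.
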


\begin{proof}
	Let $u_1$ and $u_2$ be two solutions to \eqref{main problem grad}. Consider the set 	
	$$\Lambda =\{t\in [0,1]\, : u_2\geq tu_1\},$$	
	and define $t_0 =  \sup_{t\in\Lambda}t$.\\
	
	\medskip

    \noindent
	\textit{Step 1} We show that $t_0>0$.\\
	
	Both $u_1$ and $u_2$ are strictly positive and are viscosity supersolutions of $\Delta  w= 0$, i.e., superharmonic. Then both satisfy Hopf's lemma. Combining this with the $C^{1,1}$ regularity up to the boundary, we deduce that $\Lambda$ contains a neighborhood of $0$. \\
	
	\medskip

    \noindent	
	\textit{Step 2} We prove that under $(H_1)$, $t_0\geq 1$; while, under $(H_2)$ the following dichotomy holds
	\begin{enumerate}
		\item $t_0 \geq 1$,
		\item or $\max u_2 = t_0 \max u_1$ and, moreover, $u_2-t_0u_1$ is zero at interior points where both $u_1$ and $u_2$ attain simultaneously their maximum value.
	\end{enumerate}

	Our goal in either case is to show the existence of $y_0 \in \Omega$ such that $u_2(y_0) = t_0 u_1(y_0)$. By the definition of $t_0$, it suffices to show that $\frac{\partial}{\partial \nu}(u_2-t_0 u_1)(x)<0$ on $\partial \Omega$, where $\nu$ is the outer unit normal to $\partial \Omega$. With this purpose in mind, we will show that $u_2-t_0 u_1$ is a supersolution of a suitable operator that satisfies Hopf's maximum principle. \\
	
	To begin, notice that by combining the Lischitz regularity of $g$ with Lemma \ref{lemma boundary regularity} we can find $\varepsilon>0$ such that the functions $f_i(s) := g(|u_i\geq s|)$ are Lipschitz continuous in $[0, \varepsilon]$, for $i= 1,2$. Therefore, there exists $M>0$ such that $f_2(s)+Ms$ is strictly increasing in $[0,\varepsilon]$. Let us take $\delta>0$ small enough such that $ I_\delta(\Omega) \subset \{u_2 \leq \varepsilon\}$. So, for $x \in I_\delta (\Omega)$ and for $t_0\in (0,1]$, we have that
	\begin{eqnarray}\nonumber
		 \Delta (t_0u_1- u_2)(x)-M(t_0u_1-u_2)(x)&=&g(|u_2\geq u_2(x)|) -t_0 g(|u_1\geq u_1(x)|)\\ \nonumber
        && -M(t_0u_1-u_2)(x)\\ \nonumber
		&\geq& g(|u_2\geq u_2(x)|)-t_0 g(|u_2\geq t_0u_1(x)|)\\ \nonumber
        &&-M(t_0u_1-u_2)(x)\\ \nonumber
		&= & f_2(u_2(x)) - f_2(t_0u_1(x))\\ \label{eq hopf lemma}
		&\geq& 0,
	\end{eqnarray}
	where in the first inequality we use the monotonicity of $g$ together with the fact that $u_2\geq t_0 u_1$ in $\Omega$, so $\{u_1\geq u_1(x)\}\subset\{u_2\geq t_0 u_1(x)\}$, while in the second inequality, we just use the monotonicity of $f_2$. Thus, Hopf's lemma implies that $\frac{\partial}{\partial \nu}(u_2-t_0 u_1)(x)<0$ at any point of the boundary, which is clearly a contradiction.	Therefore, $t_0 u_1(y_0)-u_2(y_0)=0$ at some $y_0\in \Omega$ (i.e., $t_0 u_1-u_2$ attains a local maximum at $y_0$). Then, by the maximum principle and Lemma \ref{lemma to avoid viscosity definition}, we get	
	\begin{equation}\label{eq contradiction krasnoselski ordered}
		0\geq \Delta(t_0 u_1)(y_0) -\Delta(u_2)(y_0) \geq g(|u_2\geq u_2(y_0)|)-t_0 g(|u_1\geq u_1(y_0)|).
	\end{equation}
	Thus, if $t_0 <1$, by monotonicity of $g$ we obtain a contradiction unless 
	
	$$g(|u_1\geq u_1(y_0)|)= g(|u_2\geq u_2(y_0)|)=0.$$
	
	From here, we conclude that $t_0\geq 1$ if $g(0)>0$. If not, we deduce  $\max u_2 = u_2(y_0) = t_0  u_1(y_0) = t_0 \max u_1$. \\

Notice that under $(H_1)$ we have concluded unconditionally that $t_0\geq 1$, which implies $u_2 \geq u_1$. By reversing the roles of $u_1$ and $u_2$ in the previous argument, we deduce $u_1=u_2$. Thus, we will only enforce $(H_2)$ in the rest of the proof.\\

 \medskip
    
\noindent \textit{Step 3} We show that $u_2\leq u_1$.\\
	
	If $u_1$ and $u_2$ are not comparable, there exist $s_0, t_0 \in (0,1)$ such that $t_0 u_1  \leq  u_2$ and $s_0 u_2  \leq u_1$. In this case, by the previous step,  $\max u_2 = t_0 \max u_1 = s_0 t_0 \max u_2 < \max u_2$ which is a contradiction. So, for the rest of the proof, we assume without loss of generality that $u_2 \leq u_1$ and that for some $t_0 \in (0,1)$ $u_1 t_0 \leq u_2$.\\

	\medskip
    \noindent \textit{Step 4}  Let $D_1$ and $D_2$ be the sets where $u_1$ and $u_2$ attain their maximum, respectively. We show that $D_1\subset D_2$, $|D_2\setminus D_1|=0$.\\

    Given any $x\in D_1$, $u_2(x)\geq t_0 u_1(x)= t_0 \max u_1 =\max u_2$. This shows the inclusion $D_1\subset D_2$. Also, Lemma \ref{lemma dead core} implies that $|D_i|= \alpha$ for $i=1,2$. So, $|D_2\setminus D_1| = 0$. \\

	\medskip
	\noindent \textit{Step 5} We show that $u_2\geq u_1$.\\
		
	From step 2, we have that for any $s \in (0, \max u_2)$ we have that $u_2 -t_0u_1>0$ on the compact set $\{u_2 = s\}$. Thus, by continuity of $u_1$ and $u_2$, there exists $k_s>0$ such that  $u_2 \geq t_0 u_1+ k_s$ on  $\{u_2 = s\}$. Moreover, we claim that for each $\varepsilon>0$ small enough, there exists a vector $y_\varepsilon \in B_\varepsilon(0)$ such that the translated function $ u_1^\varepsilon(x)=t_0 u_1(x+y_\varepsilon)$ satisfies	
	\begin{enumerate}
		\item  $u_2 >u_1^\varepsilon + \frac{k_s}{2}$ on $\{u_2 = s\}$,
		\item  there exists $x_\varepsilon \in \{u>s\}$ such that $\max u_2 >u_2(x_\varepsilon)$ and $u_1^\varepsilon(x_\varepsilon)= \max u_2 $.
	\end{enumerate} 
	
	Indeed, by the previous observations $dist(D_1, \{u_2\leq s\})>0$. Hence, since $|D_2\setminus D_1|=0$, we deduce that for $\varepsilon>0$ small enough and for any $y_\varepsilon \in B_\varepsilon(0)$ we have that $\left(D_1 +y_\varepsilon\right) \cap \left(\{u_2>s\}\setminus D_2\right) \neq \emptyset$, proving (2).
	On the other hand, (1) is a direct consequence of the continuity of $u_2$ and $u_1$.\\

	So, by combining properties (1) and (2) we have that $u_1^\varepsilon-u_2$ attains a positive maximum at some point $z_\varepsilon \in \{u_2>s\}$ . Therefore, a direct application of the maximum principle  (and Lemma \ref{lemma to avoid viscosity definition}) yields
	\begin{eqnarray*}
		0\geq t_0 \Delta u_1(z_\varepsilon+y_\varepsilon)- \Delta u_2(z_\varepsilon)&= &g(|u_2\geq u_2(z_\varepsilon)|)-t_0 g(|u_1\geq u_1(z_\varepsilon+
		y_\varepsilon)|)\\
		&=&g(|u_2\geq u_2(z_\varepsilon)|)-t_0 g(|t_0u_1\geq t_0u_1(z_\varepsilon+
		y_\varepsilon)|)\\
		& \geq& g(|u_1\geq u_1^\varepsilon(z_\varepsilon)|)-t_0 g(|t_0u_1\geq t_0u_1(z_\varepsilon+
		y_\varepsilon)|)\\
		& =& g(|u_2\geq t_0u_1(z_\varepsilon+
		y_\varepsilon)|)-t_0 g(|t_0u_1\geq t_0u_1(z_\varepsilon+
		y_\varepsilon)|),
	\end{eqnarray*}
    where we have used that $g$ is increasing and that $u_1^\varepsilon(z_\varepsilon)> u(z_\e)$. Thus, since $t_0<1$ and $g$ is non-decreasing, we have deduced that necessarily  $$g(|t_0u_1\geq t_0u_1(z_\varepsilon+
		y_\varepsilon)|)= g(|u_2\geq t_0u_1(z_\varepsilon+
		y_\varepsilon)|)=0,$$
        i.e., $z_\varepsilon$ is maximizer of $u_2$ and $u_1^\varepsilon$. However, in this case $u_1^\varepsilon(z_\varepsilon)=u_2(z_\varepsilon)$, which contradicts the fact that $u_1^\varepsilon-u_2$ attains a positive maximum. This implies that $u_2\geq u_1$ . \\
	
\end{proof}

 We conclude the first part of this section with the
\begin{proof}[Proof of Theorem \ref{thm 1}]
 Thanks to \cite{caffarelli2023fully} and Theorem \ref{uniqueness}, \eqref{main problem grad} has a unique solution, which is $C^{1,1}(\Omega)$ in virtue of Lemma \ref{lemma regularity Shagohlian}.
\end{proof}

\subsection{Symmetry and physical cases}

We can exploit the natural symmetries enjoyed by the right hand side of  \eqref{main problem grad} in combination with the uniqueness to characterize solutions in special domains. To make more precise this assertion, let us introduce some notation.

\begin{definition}
	Let $\Omega$ be a domain and let $G$ be its group of isometries. For a measurable function $u:\Omega\to \R$ let us define 			$u_A(x)= u(Ax)$ for $A\in G$.
\end{definition}

We have the following general symmetry result.
\begin{lemma}\label{lemma map general symmetry}
	Let $\Omega$ be a domain invariant with respect to the action of the group $G$. If $g$ is Lipschitz and non-decreasing, then the solution $u$ to \eqref{main problem grad} is invariant under the action of $G$.
\end{lemma}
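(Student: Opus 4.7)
The plan is to reduce the claim to the uniqueness theorem already established (Theorem \ref{thm 1}). For a fixed $A \in G$, I will consider the pullback $u_A(x) := u(Ax)$ and argue that it is itself a solution to \eqref{main problem grad}. Since the solution is unique, this forces $u_A \equiv u$, which is precisely the $G$-invariance of $u$.

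The verification that $u_A$ solves \eqref{main problem grad} splits into three essentially trivial checks. First, because $\Omega$ is $G$-invariant, $A(\partial\Omega) = \partial\Omega$ and therefore $u_A = 0$ on $\partial\Omega$. Second, since $A$ is a rigid motion, the Laplacian commutes with the pullback: as $u \in C^{1,1}(\Omega)$ by Lemma \ref{lemma regularity Shagohlian}, we have $\Delta u_A(x) = (\Delta u)(Ax)$ pointwise a.e. Third, $A$ preserves the $n$-dimensional Lebesgue measure, so
\begin{equation*}
|u_A \geq u_A(x)| \;=\; |\{y \in \Omega : u(Ay) \geq u(Ax)\}| \;=\; |A^{-1}(u \geq u(Ax))| \;=\; |u \geq u(Ax)|.
\end{equation*}
Putting these three items together,
\begin{equation*}
-\Delta u_A(x) \;=\; -(\Delta u)(Ax) \;=\; g(|u \geq u(Ax)|) \;=\; g(|u_A \geq u_A(x)|),
\end{equation*}
so $u_A$ solves \eqref{main problem grad} in the strong (and hence $W^{2,p}$-viscosity) sense. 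Invoking Theorem \ref{thm 1} gives $u_A = u$, and since $A \in G$ was arbitrary, the lemma follows.

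There is no genuine obstacle here; the argument is a clean symmetrization principle, and the role of the hypotheses ``$g$ Lipschitz and non-decreasing'' is simply to guarantee that the uniqueness conclusion of Theorem \ref{thm 1} applies. The one subtle point to note, should one wish to avoid appealing to the $C^{1,1}$ regularity of $u$, is that the notion of $W^{2,p}$-viscosity solution is manifestly preserved under isometries, because the class of admissible test functions $\varphi \in W^{2,p}(\Omega)$ is stable under composition with $A$ and the essential $\limsup$/$\liminf$ conditions transform accordingly. Since Lemma \ref{lemma regularity Shagohlian} already furnishes strong solutions, the pointwise argument above is enough.
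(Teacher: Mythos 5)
Your proposal is correct and takes essentially the same route as the paper: verify that $u_A$ is itself a solution to \eqref{main problem grad} by using that the Laplacian commutes with isometries and that rigid motions preserve the Lebesgue measure of superlevel sets, then invoke uniqueness (the paper cites Theorem \ref{uniqueness}, you cite Theorem \ref{thm 1}, which is the same uniqueness statement packaged with existence). The only differences are cosmetic elaborations — you spell out the boundary condition and the $C^{1,1}$ regularity justifying the pointwise Laplacian identity — which are implicit in the paper's version.
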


\begin{proof}
	Let $A\in G$. The proof follows from noticing that $u_A$ also solves \eqref{main problem grad}.  Notice that the measure of the superlevel sets of $u$ is invariant with respect to the action of $G$, i.e., for any $x\in \Omega$ we have
\begin{equation}\label{eq: invariance rotations}
	|y\in \Omega : u(Ay)\geq u(x)|=|y\in \Omega : u(y)\geq u(x)|.
\end{equation} 

Since $\Delta$ commutes with isometries, we have that
\begin{equation*}
	\Delta u_A(x) = \Delta u(Ax) = g(|y\in \Omega : u(y)\geq u(Ax)|)=g(|y\in \Omega : u(Ay)\geq u(Ax)|).
\end{equation*} 
Thus, Theorem \ref{uniqueness} implies that $u_A=u$.
\end{proof}

\begin{remark}
  The previous proof works (identically) for more general elliptic operators as long as the operator verifies the hypotheses discussed in the preamble to the proof of Theorem \ref{uniqueness} and commutes with the group of isometries of the given domain $\Omega$. For instance, if $\Omega$ is radially symmetric (a ball or an annulus), $G = O(n)$. In this case, a good source of $O(n)$-invariant operators are those depending only on the eigenvalues of the Hessian of functions, since $\lambda(D^2 u_A)(x)=\lambda(D^2 u)(Ax)$, for any eigenvalue $\lambda(D^2 u)$ of $D^2 u$. Examples of this kind of operators are the Monge-Ampere operator, and maximal Pucci operators (see \cite{caffacabre}). Another family of operators that satisfy this property are the $p$-Laplacian operators or, even more general, any of the form $\text{div}(a(|\nabla u|) \nabla u)$.  
\end{remark}

In the next lemma, we exploit the radial symmetry to provide examples showing the optimal regularity for solutions of \eqref{main problem grad} in the presence of dead cores. In particular, we show that the solutions detach in a $C^{2,1}$ way from their maximum, suggesting the optimal regularity for this problem. Let us remark that, even in the absence of dead cores, solutions are not necessarily smooth at their maxima (see \cite[Example 2.4]{caffarelli2023fully}).

\begin{proposition}\label{corollary radial symmetry lip}
	Let $\Omega$ be a ball and let $g$ be a smooth non-decreasing function satisfying $(H_2)$ with $\alpha>0$. Let $u$ be the unique solution of \eqref{main problem grad}, then $ \nabla u \neq 0$ in $\Omega \setminus D_u$ and
    	\begin{equation}\label{eq cubic bound}
	 \max u-u\leq  C\text{dist}(x, D_u)^3, \quad \mbox{ in $\Omega$}.
		\end{equation}
    Furthermore, if $g'(t)\geq \frac{1}{C}$ for $t \in [\alpha, |\Omega|]$, we have that 
    \begin{equation}\label{eq cubic detachment}
	 \max u-u\geq  \frac{1}{C}\text{dist}(x, D_u)^3, \quad \mbox{ in $\Omega$},
		\end{equation}
        and that \eqref{eq AP comparison} holds.
\end{proposition}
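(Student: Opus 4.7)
The plan is to combine Theorem \ref{uniqueness} with the $O(n)$-invariance of the Laplacian and of the right-hand side of \eqref{main problem grad} to reduce the problem to an elementary ODE analysis. First, invoking Lemma \ref{lemma map general symmetry} with $G = O(n)$, the unique solution $u$ of \eqref{main problem grad} is radially symmetric; I write $\Omega = B_R$ and $u(x) = U(|x|)$. Since $|u \geq u(x)|\in [\alpha,|\Omega|]$ and $g$ is non-decreasing with $g(\alpha)=0$, the right-hand side is non-negative, so $u$ is superharmonic. In radial coordinates this reads $-(r^{n-1}U'(r))'\geq 0$, and since $r^{n-1}U'(r)\to 0$ as $r\to 0$ this forces $U'\leq 0$ on $(0,R)$. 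In particular $\max u = U(0)$, and combined with Lemma \ref{lemma dead core} this gives $D_u = \overline{B_{r_0}}$ where $r_0 = (\alpha/\om_n)^{1/n}$ (with $\om_n$ the volume of the unit ball).

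Next I would show that $U$ is strictly decreasing on $(r_0,R]$, which identifies the superlevel sets $\{u\geq U(r)\}$ with the balls $B_r$. If $U$ were constant on some subinterval $[s,s']\subset (r_0,R]$, then $U''=0$ there would force $g(|u\geq U(s)|)=0$, and the uniqueness of the zero of $g$ in $(H_2)$ would give $|u\geq U(s)|=\alpha = \om_n r_0^n$, contradicting $|u\geq U(s)|\geq \om_n s^n > \om_n r_0^n$. Hence $|u\geq U(r)| = \om_n r^n$ for $r\in (r_0,R]$, and integrating the resulting ODE $-(r^{n-1}U'(r))' = r^{n-1} g(\om_n r^n)$ with $U'(r_0)=0$ (valid by the $C^{1,1}$ regularity from Lemma \ref{lemma regularity Shagohlian}) yields
\begin{equation*}
	-U'(r) = \frac{1}{r^{n-1}}\int_{r_0}^{r} s^{n-1}\, g(\om_n s^n)\, ds, \qquad r \in [r_0,R].
\end{equation*}
Since $g(\om_n s^n) > 0$ for $s > r_0$, this expression is strictly negative on $(r_0,R]$, proving $\nabla u \neq 0$ on $\Omega\setminus D_u$.

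From here the cubic bounds follow by elementary estimates on the formula above. The Lipschitz continuity of $g$ together with $g(\alpha)=0$ yields $g(\om_n s^n)\leq C(s^n - r_0^n)\leq C(s - r_0)$ on $[r_0,R]$; substituting into the formula for $-U'$ gives $-U'(r)\leq C(r-r_0)^2$, and integrating once more from $r_0$ to $|x|$ produces \eqref{eq cubic bound}. Under the additional hypothesis $g' \geq 1/C$ on $[\alpha,|\Omega|]$, the same chain of estimates with the reverse inequality $g(\om_n s^n) \geq (1/C)(s - r_0)$ in place yields \eqref{eq cubic detachment}. The estimate \eqref{eq AP comparison} is then immediate: the two-sided control on $g$ near $\alpha$ makes $g(\om_n |x|^n)$ comparable to $|x| - r_0$, while the cubic inequalities make $|x| - r_0$ comparable to $(\max u - u)^{1/3}$.

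The main obstacle I anticipate is the structural step: pinning down $D_u$ exactly as $\overline{B_{r_0}}$ and ruling out plateaux of $U$ outside the dead core. Both facts rest on the uniqueness of the zero of $g$ in $(H_2)$ rather than on any finer regularity of $u$, so once they are established the remainder of the argument reduces to a direct computation on the radial ODE.
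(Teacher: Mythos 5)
Your proof is correct and follows essentially the same route as the paper: symmetrize via Lemma \ref{lemma map general symmetry}, identify the dead core as a concentric ball using Lemma \ref{lemma dead core}, derive the formula $|u\ge u(x)|=\om_n|x|^n$ outside the dead core, reduce to the radial ODE $-(r^{n-1}U')'=r^{n-1}g(\om_n r^n)$, and integrate twice using the Lipschitz bounds $g(\om_n s^n)\asymp (s-r_0)$.

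One small point in your favor: you explicitly rule out plateaux of the radial profile on $(r_0,R]$ by arguing that a plateau would force $g(|u\ge U(s)|)=0$ at a level set of measure strictly greater than $\a$, contradicting $(H_2)$. The paper simply asserts that the profile is ``decreasing in $(r(\alpha),R)$'' and reads off the formula for $|u\ge u(x)|$; the strict monotonicity implicitly needed there is the same observation you make. Also, the paper passes through a $C^{2,\gamma}$ regularity remark before integrating the ODE, but as your write-up correctly notes, the $C^{1,1}$ regularity from Lemma \ref{lemma regularity Shagohlian} is all that is needed to integrate $-(r^{n-1}U')'$ from $r_0$ with the boundary condition $U'(r_0)=0$, so no ideas are missing.
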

\begin{proof}
 Without loss of generality, we assume $\Omega = B_R$. By Lemma \ref{lemma map general symmetry}, $u$ is radially symmetric. In particular, $D_u$ is connected; otherwise, $u$ will forcefully attain a local minimum contradicting the superharmonicity of $u$. Hence, $\{u=u_{max}\}= \overline{B_{r}(0)}$ and, thanks to Lemma \ref{lemma dead core}, $r(\alpha)=\left({\frac{\alpha}{\omega_n}}\right)^\frac{1}{n}$, where $\omega_n$ is the Lebesgue measure of the unit ball.  Furthermore, since the radial profile of $u$ is even, increasing in $(-R,-r(\alpha))$, and decreasing in $(r(\alpha),R)$, we deduce that
	\begin{equation}\label{eq radial nonlin}
		f(x)=	g(|u\geq u(x)|)=\begin{cases}
			g\left( \omega_n |x|^n \right), &|x|\geq r(\alpha)\\
			g(\alpha)=0
			, & |x|\leq r(\alpha).
		\end{cases}
	\end{equation}
   In particular, $f$ is Lipschitz continuous in $\Omega$ and smooth in $\Omega \setminus N_u$. Therefore, by classical elliptic regularity, $u \in C^{2,\gamma}(\Omega)$ for any $\gamma \in (0,1)$ and $u \in C^\infty_{loc}(\Omega\setminus N_u)$. Now, set $u(x)=\zeta(|x|)$. By the Lipschitz regularity of $g$, we have that 
   \begin{equation}\label{eq lipchitz f}
       f(|x|)=g(\omega |x|^n)-g(\alpha)\leq C(|x|-r(\alpha))_+.
   \end{equation}

   Additionally, we have that $\zeta$ satisfies	
	\begin{equation*}
		-\big( r^{n-1}\zeta'\big)' = r^{n-1}f(r)
	\end{equation*}	
	on $[r(\alpha), R]$, with $\zeta''(r(\alpha))=u'(r(\alpha))=0$ and $\zeta(r(\alpha))=\max u$. Integrating between $r(\alpha)$ and $r$ we find
	\begin{equation}\label{eq main zeta}
	    -\zeta'(r)= \frac{1}{r^{n-1}}\int_{r(\alpha)}^{r} s^{n-1} f(s)ds.
	\end{equation} 
	By combining \eqref{eq lipchitz f} with  \eqref{eq main zeta}, and integrating one more time between $r(\alpha)$ and $s$ with $s<R$, we readily deduce that  
    $$\max u - \zeta(s)= \int_{r(\alpha)}^s\frac{1}{r^{n-1}}\int_{r(\alpha)}^{r} s^{n-1} f(s)dsdr\leq C(s-r(\alpha))^3,$$
    which yields \eqref{eq cubic detachment}. By combining \eqref{eq cubic detachment} with the smoothness of $u$ in $\Omega\setminus N_u$ and its radial symmetry, we deduce that $u \in C^{2,1}(\Omega)$.\\
    
    Furthermore, under the assumption $g'(t)\geq \frac{1}{C}$ for $t \in [\alpha, |\Omega|]$, we have that  
    $$f(|x|)=g(\omega |x|^n)-g(\alpha)\geq \frac{1}{C}(|x|-r(\alpha))_+,$$
   which combined with \eqref{eq main zeta} yields, by an analogous integration argument,  
   $$\max u - \zeta(s) \geq \frac{1}{C}(s-r(\alpha))^3.$$	
\end{proof}

\section{Free boundary}\label{sec: fb}   
\subsection{Free boundary analysis preliminaries}
Our aim now is to study free boundary regularity for solutions of \eqref{main problem grad} together with their qualitative behavior as they approach the free boundary, which we will denote by $FB(u) = \partial \{u < \max u\}$. For this reason, we will enforce from now on the hypothesis $(H_2)$ with $\alpha>0$. Additionally, in this section, we will choose to work with the function $v= \max u - u$, so that now $\{u = \max u\} = \{v =0\}$ and $FB(u) = \partial \{v > 0\} := FB(v)$. This transformation is also convenient, since it lets us rewrite \eqref{main problem grad} as
\begin{equation}\label{eq reversed problem}
	\Delta v = f(v), \quad \mbox{in $\Omega$},
\end{equation}
where $f(t)= g(|v\leq t|)$ is an increasing and absolutely continuous function (the latter in virtue of Lemma \ref{lemma abs cont}). In particular, this allows us to freeze the non-local dependence of $v$ and treat it as a semilinear free boundary problem. Our main strategy will be to show that solutions to \eqref{eq reversed problem} behave near their free boundary as solutions to the so-called Alt-Phillips problem (\cite{alt1986free}). Given $\gamma \in (1,2)$, a non-negative continuous function $v_0$ vanishing at $D$ satisfies the Alt-Phillips equation in $\Omega$ if 
\begin{equation}\label{eq altphil}
	\begin{cases}
		\Delta v_0 = v_0^{\gamma-1}, \quad \mbox{in $\Omega$},\\
		|\nabla v_0| =0, \quad \mbox{on $D\cap \Omega$}.
	\end{cases}   
\end{equation}
It is known (\cite{alt1986free}, \cite{bonorino2001regularity}) that for any $x_0 \in \partial D$, and for $r$ small enough such that $B_r(x_0) \subset \Omega$,
\begin{equation}\label{eq eqdistancealt}
	\frac{1}{C} r^\frac{2}{2-\gamma} \leq \sup_{B_r(x_0)} w \leq C r^\frac{2}{2-\gamma}.
\end{equation}

In our case, as may be seen in the examples discussed in Section 3, the optimal regularity for $v$ near its free boundary is $C^{2,1}$, corresponding to $\gamma = \frac{4}{3}$ in the Alt-Phillips equation. Heuristically, this can be thought of as the third derivatives of $v$ having a jump across its free boundary, whereas $v, \nabla v$, and $\Delta v$ continuously transition to zero across $FB(v)$. \\

The first result confirming this intuition shows that solutions to \eqref{eq reversed problem} are subsolutions to an Alt-Phillips type equation with $\gamma = \frac{4}{3}$.

\begin{lemma}\label{lemma nondegeneracy}
	Let us assume that $g$ satisfies $g'(t) \geq \frac{1}{C}$ for a.e. $t\in [\alpha, |\Omega|]$. If $v$ solves \eqref{eq reversed problem}, then
	\begin{equation}\label{eq non-degeneracy}
		g(|v\leq t|)\geq \frac{1}{C}t^\frac{1}{3},
	\end{equation}
	in $[0, \max v]$. 
\end{lemma}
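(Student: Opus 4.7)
The plan is to reformulate the estimate as a differential inequality for the distribution function
$$T(t) := |\{0<v\leq t\}| = |\{v\leq t\}|-\alpha,$$
and then close it into an autonomous ODE. The first step is to note that $(H_2)$ gives $f(0) = g(\alpha) = 0$, and the Lipschitz continuity of $g$ combined with the hypothesis $g' \geq 1/C$ on $[\alpha, |\Omega|]$ yields the two-sided control
$$\frac{1}{C}T(t) \;\leq\; f(t) = g(T(t)+\alpha) \;\leq\; L\,T(t), \qquad t\in[0,\max v],$$
where $L=\|g'\|_{L^\infty}$. So it suffices to prove $T(t)\geq \frac{1}{C'}\,t^{1/3}$ and then transfer the lower bound back to $f$.

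The key identity comes from integrating the equation over $\{v\leq t\}$. For $t\in(0,\max v)$ we have $\{v\leq t\}\Subset\Omega$ since $v\equiv \max v$ on $\partial\Omega$; using $v\in C^{1,1}(\Omega)$ from Lemma \ref{lemma regularity Shagohlian} (so $|\nabla v|=0$ on $\partial\{v>0\}$) and applying the divergence theorem for a.e. $t$ (regular values, via Sard) gives
$$\int_{\{v\leq t\}}f(v)\,dx = \int_{\{v\leq t\}}\Delta v\,dx = \int_{\{v=t\}}|\nabla v|\,d\H.$$
The left-hand side is bounded above by $f(t)T(t)$, using monotonicity of $f$ and $f(0)=0$. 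For the right-hand side, Cauchy--Schwarz together with the coarea formula from Lemma \ref{lemma abs cont} (which gives $T'(t) = \int_{\{v=t\}}|\nabla v|^{-1}d\H$) yields
$$\int_{\{v=t\}}|\nabla v|\,d\H \;\geq\; \frac{\H(\{v=t\})^{2}}{T'(t)}.$$
Because $\{v\leq t\}\Supset\{v=0\}$ has measure at least $\alpha>0$ and is compactly contained in $\Omega$, the relative isoperimetric inequality provides a universal lower bound $\H(\{v=t\})\geq c_n \alpha^{(n-1)/n}$. Combining these three estimates with Step 1's upper bound $f(t)\leq L T(t)$ produces the closed differential inequality
$$T(t)^{2}T'(t) \;\geq\; \frac{c_n^{2}\alpha^{2(n-1)/n}}{L} \;=:\; A,$$
valid for a.e. $t\in(0,\max v)$.

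Integrating from $0$ to $t$ with $T(0)=0$ gives $T(t)^{3}\geq 3A t$, hence $T(t)\geq (3A)^{1/3}t^{1/3}$, and Step 1 converts this into $f(t)\geq \frac{1}{C}(3A)^{1/3}t^{1/3}$, as desired. The case $t=\max v$ follows by continuity, and $t=0$ is trivial. The main technical obstacle is justifying the divergence theorem identity in a way that survives into the integrated estimate; it suffices that $\{v=t\}$ be a Lipschitz (even $C^{1,1}$) hypersurface with $|\nabla v|>0$, which holds for a.e. $t$ by the implicit function theorem together with the a.e.\ non-vanishing of $\nabla v$ on level sets (Lemma \ref{lemma abs cont}). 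The a.e.\ differential inequality then extends to every $t$ by the monotonicity and continuity of $T$.
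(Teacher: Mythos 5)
Your proof is correct and follows essentially the same strategy as the paper: lower-bound $\int_{\{v=t\}}|\nabla v|$ by $\mathrm{Per}^2/T'(t)$ via Cauchy--Schwarz and the coarea formula, lower-bound the perimeter by a universal constant via the isoperimetric inequality and $|D_v|=\alpha>0$, upper-bound $\int_{\{v\leq t\}}f(v)$ by $CT(t)^2$ using the Lipschitz control on $g$, and integrate the resulting ODE $T^2T'\gtrsim 1$. The one technical difference is how you obtain the identity $\int_{\{v=t\}}|\nabla v|\,d\mathcal H^{n-1}=\int_{\{v\leq t\}}f(v)\,dx$: the paper tests the equation against $(t-v)_+$ and then differentiates in $t$, which avoids ever needing level-set regularity, while you appeal directly to the divergence theorem on $\{v\leq t\}$ and justify it for a.e.\ $t$ by a Sard-type argument. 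Both are valid; the paper's route is slightly safer, while yours is more direct. Worth noting also that working with $T(t)=|v\leq t|-\alpha$ rather than $h(t)=|v\leq t|$ is the right choice; the paper's displayed inequality $h'(t)\geq \frac1C h(t)^{-2}$ has a typo and should read $(h(t)-\alpha)^{-2}$, as the subsequent conversion to $f'(t)\geq \frac1C f(t)^{-2}$ (using the bilipschitz bound on $g$ at $\alpha$) requires. Your formulation avoids this issue entirely.
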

\begin{proof}
	Set $h(t)=|v\leq t|$. Thanks to Lemma \ref{lemma abs cont}, we have that $h(t)$ is absolutely continuous and for almost every $t$ in $[0, u_{\max}]$ we also have that
	\begin{equation}\label{eq derivativemu}
		h'(t) = \int_{ \{v=t\}} \frac{1}{|\nabla v|} d\mathcal{H}^{n-1}.
	\end{equation}
	Additionally, since $v$ is Lipschitz, we can take values of $t$ such that $\{v<t\}$ is a set of finite perimeter. Therefore, by the Cauchy Schwarz inequality,
	\begin{equation}\label{eq perimeterbound}
		\text{Per}(v<t) = \int_{\{ v=t\}} \frac{|\nabla v|^\frac{1}{2}}{|\nabla v|^\frac{1}{2}}d\mathcal{H}^{n-1}\leq \Big(\int_{ \{ v=t\}} \frac{1}{|\nabla v|} d\mathcal{H}^{n-1}\Big)^\frac{1}{2} \Big(\int_{\{ v=t\}} |\nabla v| d\mathcal{H}^{n-1}\Big)^\frac{1}{2}.
	\end{equation}
	
	On the other hand, given $t>0$, we can test in \eqref{eq reversed problem} with $(t-v)_+$ to deduce
	\begin{equation}\label{eq testinlevelsets}
		\int_0^t \int_{\{ v=s\}} |\nabla v| d\mathcal{H}^{n-1}ds=\int_{\{ v\leq t\}} |\nabla v|^2 d\mathcal{H}^{n-1}= \int_{\{v\leq t\}} g(|v\leq v(x)|)(t-v(x))_+.
	\end{equation}
	
	Thus, taking derivatives on both sides of \eqref{eq testinlevelsets}, we deduce that for a.e. $t$ in $[0, u_{\max}]$,
	\begin{equation}\label{eq derivativelevel}
		\int_{\{ v=t\}} |\nabla v| d\mathcal{H}^{n-1} = \int_{\{v\leq t\}} g(|v\leq v(x)|)dx.
	\end{equation}
	
	Hence, the combination of \eqref{eq derivativemu}, \eqref{eq perimeterbound}, and \eqref{eq derivativelevel} provides the inequality
	\begin{equation}\label{eq differentialineq0}
		h'(t) \geq \text{Per}(v<t)^2 \Big( \int_{\{v\leq t\}} g(|v\leq v(x)|)\Big)^{-1}.
	\end{equation}
	
	Since $g$ is Lipschtiz and $g(|v=0|)=0$, we deduce that
	\begin{equation}\label{eq integralbound}
		\int_{\{v\leq t\}} g(|v\leq v(x)|)\leq  C \int_{\{0<v\leq t\}} |0<v< v(x)|\leq C |0<v< t|^2.
	\end{equation}
	Further, in virtue of the isoperimetric inequality and the fact that $|v=0|=\alpha>0$, we deduce
	$$\text{Per}(v<t)\geq \frac{1}{C}|v<t|^\frac{n-1}{n}\geq \frac{1}{C} (\alpha)^\frac{n-1}{n}.$$
	
	Combining this latter remark with \eqref{eq integralbound} and \eqref{eq differentialineq0} we deduce the differential inequality
	\begin{equation}\label{eq differentialineq0}
		h'(t) \geq \frac{1}{C} h(t)^{-2}.
	\end{equation}

    Let us set $f(t)= g(|v\leq t|)$. Since, by hypothesis, $g$ is bilipschitz, we can rewrite \eqref{eq differentialineq0} as
    \begin{equation}\label{eq differentialineq1}
		f'(t) \geq \frac{1}{C} f(t)^{-2}.
	\end{equation}
	
	Using the absolute continuity of $f$ and integrating \eqref{eq differentialineq1} between $\e$ and $t$, we deduce
	\begin{equation}\label{eq differentialineq0}
		f(t)^3-f(\e)^3 \geq \frac{1}{C} (t-\e),
	\end{equation}
	which yields the result after taking $\e\to 0^+$ and using the continuity of $f$ at 0. 
\end{proof}

The next step is to show that $v$ is a supersolution to an Alt-Phillips equation with exponent $\gamma  = \frac{7}{6}$. This is the first step in an iterative process that will culminate in showing that $v$ is a supersolution of an Alt-Phillips equation with exponent $\frac{1}{5}$. With this goal in mind, let us recall some gradient estimates proved in \cite{phillips1983hausdoff} for solutions to the Alt-Phillips equation. We claim that the same proof also holds for supersolutions and, as the complete proof is dispersed across various papers, we present it here for the sake of clarity and completeness.

\begin{lemma}\label{lemma grad bound 0}
Let $\gamma \in [1,2)$ and let $v \in C^{1,1}(\Omega)$ be a non-negative function. Let $D =\{v =0\}$ and assume that $v$ satisfies
\begin{equation}\label{eq altphil supersol}
	\begin{cases}
		\Delta v \leq C m_\gamma(v), \quad \mbox{in $\Omega$},\\
		|\nabla v| =0, \quad \mbox{on $D\cap \Omega$},
	\end{cases}   
\end{equation}
with $m_\gamma(t) = t_+^{\gamma-1}$ if $\gamma >1$ and $\chi_{\{t>0\}}$ if $\gamma=1$. Then, we have that
\begin{equation}\label{eq 0 gradient estimate}
    |\nabla v(x)| \leq C(v(x))^{\frac{\gamma}{2}}.
\end{equation}
\end{lemma}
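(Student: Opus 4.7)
The plan is to prove the gradient estimate via a Bernstein-type maximum-principle argument applied to the auxiliary function
$$P(x) := |\nabla v(x)|^2 - K\, v(x)^{\gamma},$$
for a constant $K = K(C,\gamma,n)$ to be chosen. Showing $P \leq 0$ in $\Omega$ is precisely the claimed estimate (with constant $\sqrt{K}$). The vanishing conditions $v = 0$ and $|\nabla v|=0$ on $D\cap\Omega$ force $P = 0$ on $D\cap\Omega$, so any positive supremum of $P$ must occur at an interior point of $\{v>0\}\cap\Omega$ (or on $\partial\Omega$, which lies outside the scope of the statement). The template that motivates this choice is the one-dimensional situation: if $\varphi \geq 0$ satisfies $\varphi''\leq C\varphi^{\gamma-1}$ with $\varphi(a) = \varphi'(a) = 0$, then multiplying by $\varphi' \geq 0$ (which has constant sign near $a$) and integrating yields $(\varphi')^2 \leq \tfrac{2C}{\gamma}\varphi^{\gamma}$, suggesting the choice $K \sim 2C/\gamma$.

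To promote this to several variables, I would compute $\Delta P$ using the Bochner identity
$$\Delta(|\nabla v|^2) = 2|D^2 v|^2 + 2\nabla v \cdot \nabla (\Delta v)$$
together with the chain rule $\Delta(v^\gamma) = \gamma v^{\gamma-1}\Delta v + \gamma(\gamma-1)v^{\gamma-2}|\nabla v|^2$. At a hypothetical positive interior maximum $x^\ast$ of $P$, the first-order condition $\nabla P(x^\ast) = 0$ identifies $\nabla v(x^\ast)$ as an eigenvector of $D^2 v(x^\ast)$ with eigenvalue $\tfrac{K\gamma}{2}v(x^\ast)^{\gamma-1}$. Combining this algebraic consequence with the hypothesis $\Delta v \leq Cv^{\gamma-1}$ and a Cauchy-Schwarz-type lower bound on $|D^2 v|^2$ (sharpened by the known eigenvalue in the direction $\nabla v/|\nabla v|$), and choosing $K = 2C/\gamma$ so that the quadratic contributions from $v^{\gamma-2}|\nabla v|^2$ cancel precisely against the cross terms, one reaches a sign for $\Delta P(x^\ast)$ inconsistent with the second-derivative test, forcing $P\leq 0$. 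On $D\cap \Omega$ both sides of the conclusion vanish and the estimate is trivial.

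The main technical obstacles, which I expect to be the bulk of the work, are three. First, the Bochner identity formally requires $v \in C^3$, while only $v \in C^{1,1}$ is given; this is handled by mollifying $v$ (or by approximating with solutions of the smoothed equation $\Delta v_\varepsilon = C(v_\varepsilon+\varepsilon)^{\gamma-1}$), proving the pointwise estimate uniformly in $\varepsilon$, and passing to the limit via $C^{1,\alpha}$ compactness. Second, we only have the one-sided bound $\Delta v \leq Cv^{\gamma-1}$, so we cannot directly control $\nabla(\Delta v)$; a standard workaround is to compare $v$ with the majorant $\tilde v$ solving $\Delta \tilde v = C\tilde v^{\gamma-1}$ with matching data, for which the Bernstein computation is classical, and to use the comparison principle together with the structure of $P$ to transfer the bound back to $v$. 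Third, the weight $v^\gamma$ is degenerate at the free boundary, so the maximum principle must be applied on the non-degenerate set $\{v > \delta\}$ and extended to the limit $\delta \to 0$ by continuity of $P$, using $P = 0$ on the free boundary. The delicate balance between the $(\Delta v)^2$ contribution from Bochner and the degenerate $v^{\gamma-2}|\nabla v|^2$ terms is what dictates the exact value of $K$ and is the most intricate step of the calculation.
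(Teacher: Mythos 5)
Your Bernstein-type strategy runs into a fundamental obstruction that the mollification/comparison workarounds you sketch do not resolve. Bochner's identity introduces the term $2\nabla v\cdot\nabla(\Delta v)$, and in the classical Bernstein computation (say for an exact solution of $\Delta \tilde v = C\tilde v^{\gamma-1}$) this is controlled by substituting the equation: $\nabla(\Delta\tilde v)=C(\gamma-1)\tilde v^{\gamma-2}\nabla\tilde v$. But the hypothesis here is only the one-sided differential inequality $\Delta v\le C m_\gamma(v)$, which gives no information whatsoever about $\nabla(\Delta v)$ — a supersolution can have $\Delta v$ oscillating wildly below $Cm_\gamma(v)$, with unbounded gradient. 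Mollifying $v$ does not repair this, because the mollified function still only satisfies a one-sided bound. The comparison-function idea also does not work as proposed: since $v$ satisfies $\Delta v \le Cm_\gamma(v)$, it is a \emph{supersolution} of the Alt--Phillips equation, so the exact solution $\tilde v$ with matching boundary data lies \emph{below} $v$, not above; and in either direction, a pointwise ordering $v\ge\tilde v$ (or $\le$) does not transfer the pointwise gradient bound for $\tilde v$ to one for $v$ away from contact points. Finally, note the borderline case $\gamma=1$, where $m_\gamma(v)=\chi_{\{v>0\}}$ is discontinuous, so $\Delta v$ need not even be continuous and differentiating the equation is meaningless.

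The paper's argument avoids all of this by never differentiating the equation. It proves a Harnack-type inequality $\sup_{B_r}v\le C[\inf_{B_r}v+r^{2/(2-\gamma)}]$ using only (i) the representation formula with the fundamental solution, which needs only the $L^\infty$ bound $0\le\Delta v\le Cm_\gamma(v)$, and (ii) the sub-mean-value property of the (implicitly assumed) subharmonic $v$. The gradient bound then falls out of the standard rescaled interior estimate $\sup_{B_{r/2}}|\nabla v|\le C[r\sup_{B_r}|\Delta v|+\tfrac1r\sup_{B_r}v]$ after choosing $r$ with $r^{2/(2-\gamma)}=v(0)$. This route needs only $L^\infty$ control on $\Delta v$, which is exactly what the supersolution hypothesis provides; a Bernstein argument intrinsically needs more. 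If you want to pursue a maximum-principle route, the place it does apply in the paper is Lemma \ref{lemma supergradient bound 3000}, where the equation holds with equality inside $\{v>0\}$.
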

\begin{proof}
 We prove the case $\gamma>1$ and we refer the reader to \cite{phillips1983hausdoff} for the remaining case $(\gamma=1)$.\\ 

\noindent{\it Step One:} We show the following form of non-degeneracy: assume $0\in \Omega \setminus D$ and let $r>0$ be such that $B_{r}(0) \subset \subset \Omega$, then there exist universal constants $c_0, \tau > 0$ such that if $\fint_{\partial B_r} v \geq c_0r$, then $v(x) \geq \tau(\int_{\partial B_r} v)$ for $x \in B_{\frac{r}{4}}(0)$.\\

This first step follows \cite[Lemma 1.1 ]{phillips1983hausdoff}. Consider
\begin{equation*}
G(r) = 
\begin{cases}
    -\log r \hspace{8mm} \text{ if } n = 2 \\
    r^{2 - n} - 1 \hspace{5mm} \text{ if } n \geq 3,
\end{cases}
\end{equation*}
which is , up to a constant, the fundamental solution of $\Delta$. Consequently, integrating by parts and using the boundedness of $f$, we have that for some universal constants $c, \bar{c}$,
\begin{equation*}
\fint_{\partial B_1} v - v(0) = c\int_{B_1} G\Delta v \leq c\Vert f(v)\Vert_{L^\infty(\Omega)}\int_{B_1} G \leq \bar{c}.
\end{equation*}
Then, we deduce that, if 
\begin{equation}\label{harnacksito 1}
\fint_{\partial B_1} v \geq 2\bar{c} := c_0, 
\end{equation}
then
\begin{equation}\label{harnacksito 2}
v(0) \geq \frac{1}{2}\fint_{\partial B_1} v.
\end{equation}
Set $x \in B_{\frac{1}{4}}$. Since $v$ is subharmonic, we get
\begin{equation}\label{eq subharm bound}
v(0) \leq c(n)\fint_{\partial B_{\frac{1}{2}}(x)} v.
\end{equation}
Then, if we assume that
\begin{equation*}
\fint_{\partial B_1(0)} v \geq c(n)c_0,
\end{equation*}
it follows that
\begin{equation*}
\fint_{\partial B_{\frac
{1}{2}(x)}} v \geq c_0.
\end{equation*}
Thus, by combining \eqref{harnacksito 1} and \eqref{harnacksito 2}, we obtain
\begin{equation*}
v(x) \geq \frac{1}{2}\int_{\partial B_{\frac{1}{2}}(x)} v.
\end{equation*}
Putting everything together, we get that for some universal constant $\tau >0$,
\begin{equation*}
v(x) \geq \tau \fint_{\partial B_{1}(0)} v,
\end{equation*}
as long as 
\begin{equation*}
\fint_{\partial B_1(0)} v \geq c_0.
\end{equation*}

\medskip

\noindent{\it Step two:} We show that for any $x_0 \in \Omega$ and $r > 0$ such that $B_{4r}(x_0) \subset \subset \Omega$, we have the Harnack inequality

\begin{equation}\label{eq harnack ap}
\sup_{x \in B_{r}(x_0)} w \leq C[\inf_{x \in B_r(x_0)} w + r^\frac{2}{2-\gamma}].
\end{equation}

Without loss of generality, we may assume that $x_0 = 0$. Then, if 
\begin{equation*}
\fint_{\partial B_{4r}(0)} v \leq c_0r^{\frac{2}{2 - \gamma}},    
\end{equation*}

we have by subharmonicity that 

\begin{equation*}
\sup_{x \in B_r(x_0)} v \leq Cr^{\frac{2}{2 - \gamma}}.   
\end{equation*}

Otherwise, we have that 

\begin{equation*}
v(x) \geq \tau \fint_{\partial B_r} v,    
\end{equation*}

and so 

\begin{equation*}
\sup_{x \in B_r} v(x) \leq C\inf_{x \in B_r} v(x).    
\end{equation*}

Combining the two estimates yields \eqref{eq harnack ap}.\\

\medskip

\noindent{\it Step three:} We show \eqref{eq 0 gradient estimate}. This part follows \cite[Lemma 1.12]{alt1986free}.\\

Since $w(0)>0$, we can take $r$ such that $r^\frac{2}{2-\gamma} = w(0)$. Then, \eqref{eq harnack ap} implies
\begin{equation}\label{harnacksito 3}
\sup_{x \in B_r} w \leq C w(0).
\end{equation}

By gradient estimates for the Laplacian, we have that 
\begin{equation*}
\sup_{x \in B_\frac{r}{2}} |\nabla w| \leq C[r\sup_{x \in B_r}m_\gamma(w) + \frac{1}{r}\sup_{x \in B_r} w]. 
\end{equation*}

Then, by applying \eqref{harnacksito 3}, and the choice of $r$, we deduce
\begin{equation*}
\sup_{x \in B_\frac{r}{2}} |\nabla v(x)| \leq C[w(0)^\frac{2}{2-\gamma}w(0)^{\gamma-1}+w(0)^\frac{-2}{2-\gamma}w(0)]=Cw(0)^\frac{\gamma}{2},
\end{equation*}
finishing the proof.
\end{proof}

Additionally, assuming that $v$ is a supersolution of an Alt-Phillips equation, we provide a bound on the second derivatives of $v$ that matches the known bound for the Alt-Phillips equation (again, see \cite{phillips1983hausdoff} and \cite{alt1986free}). The argument is essentially a modification of an argument of Shahgholian \cite{shahgholian2003c1}.

\begin{lemma}\label{shahgholian bound}
Let $v$ be a solution to \eqref{eq reversed problem} and furthermore assume that $v$ satisfies \eqref{eq altphil supersol} with $\gamma > 1$. Then, we have that 

\begin{equation}\label{eq shahgholian}
|D^{i,j}v(x)| \leq C|v(x)|^{\gamma - 1}    
\end{equation}

for a.e. $x \in \Omega$. 
\end{lemma}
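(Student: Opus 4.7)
The plan is to reduce the pointwise Hessian estimate to the $C^{1,1}$ regularity result of \cite{shahgholian2003c1} already invoked in Lemma \ref{lemma regularity Shagohlian}, by performing an intrinsic Alt--Phillips rescaling around each interior point of $\{v>0\}$. Since $v\in C^{1,1}(\Omega)$, $D^2 v$ is classically defined a.e.\ in $\Omega$; on $\{v=0\}=D_u$, the Stampacchia observation used in Lemma \ref{lemma dead core} gives $D^2 v=0$ a.e., so the estimate holds trivially there. It thus suffices to fix a Lebesgue point $x_0$ of $D^2v$ with $v(x_0)>0$ and $B_{4\rho}(x_0)\cc\Omega$, where
\begin{equation*}
\rho:=v(x_0)^{(2-\gamma)/2}
\end{equation*}
is the natural Alt--Phillips scale at $x_0$ (points close to $\partial\Omega$ are handled separately: Hopf's lemma gives $v\geq c>0$ in a fixed inner neighborhood of $\partial\Omega$, so the bound follows from standard interior Schauder theory uniformly away from the free boundary).

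Define the rescaling $\tilde v(y):=v(x_0+\rho y)/v(x_0)$ on $B_1(0)$. Applying the Harnack-type estimate \eqref{eq harnack ap} at radius $\rho$, together with the identity $\rho^{2/(2-\gamma)}=v(x_0)$, yields $\sup_{B_\rho(x_0)} v\leq C v(x_0)$, i.e.\ $0\leq \tilde v\leq C$ on $B_1(0)$. A direct change of variables shows that $\tilde v$ solves
\begin{equation*}
\Delta \tilde v(y)=\tilde f(\tilde v(y)), \qquad \tilde f(s):=\frac{\rho^2}{v(x_0)}\,f(s\,v(x_0)),
\end{equation*}
and $\tilde f$ is non-decreasing and non-negative (because $f$ is), with $\tilde f(\tilde v)\leq C\tilde v^{\gamma-1}\leq C$ by the Alt--Phillips hypothesis \eqref{eq altphil supersol} and the choice of $\rho$.

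Thus $\tilde v$ is a non-negative function on $B_1(0)$ satisfying a semilinear equation with non-decreasing, non-negative, universally bounded reaction and universally bounded $L^\infty$ norm, which places it squarely in the hypotheses of \cite[Theorem 1.1]{shahgholian2003c1}. This yields $\|\tilde v\|_{C^{1,1}(B_{1/2})}\leq C$ with universal $C$, and in particular $|D^2\tilde v(0)|\leq C$ at our Lebesgue point. Unscaling via
\begin{equation*}
D^2\tilde v(0)=\frac{\rho^2}{v(x_0)}D^2v(x_0)=v(x_0)^{1-\gamma}D^2v(x_0)
\end{equation*}
gives $|D^2 v(x_0)|\leq C v(x_0)^{\gamma-1}$, which is \eqref{eq shahgholian}. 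The main obstacle is ensuring that the Shahgholian estimate depends only on quantities preserved by the rescaling; this works because its hypotheses involve only sup norms, monotonicity, and non-negativity, all of which are invariant under our choice $\rho=v(x_0)^{(2-\gamma)/2}$, which is precisely the reason this is the correct Alt--Phillips scale. A secondary technical point is simply verifying that the change of variables converts \eqref{eq reversed problem} into a semilinear equation of exactly the form required by Shahgholian's theorem.
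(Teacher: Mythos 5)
Your proposal is correct, but it takes a genuinely different route from the paper. The paper proves the estimate directly via the Alt--Caffarelli--Friedman monotonicity formula: it differentiates the equation, observes that the directional derivatives $v_e$ have subharmonic positive and negative parts (using monotonicity of $f$), bounds the ACF functional $\phi(r,\tilde v^+,\tilde v^-)$ by Calder\'on--Zygmund estimates and the Harnack inequality \eqref{eq harnack ap}, and then makes the intrinsic choice $r^{2/(2-\gamma)}=v(0)$ to close the estimate. You instead \emph{rescale first} at the same intrinsic scale $\rho=v(x_0)^{(2-\gamma)/2}$, show via \eqref{eq harnack ap} and \eqref{eq altphil supersol} that the rescaled function $\tilde v$ has universally bounded oscillation and universally bounded, non-decreasing reaction, and then invoke \cite[Theorem 1.1]{shahgholian2003c1} as a black box on $B_1$ to get a universal $C^{1,1}$ bound, which unscales to \eqref{eq shahgholian}. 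This is a cleaner reduction (it delegates the ACF machinery to Shahgholian's theorem rather than reproducing it), at the cost of having to verify that Shahgholian's constant depends only on $\|\tilde v\|_{L^\infty}$ and the bound on the reaction, which you do address; morally both arguments are the same ACF proof performed in different coordinates, since Shahgholian's theorem itself is proved via ACF.

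Two small points worth tightening. First, the reaction $\tilde f(s)=v(x_0)^{1-\gamma}f(sv(x_0))$ is not globally bounded as a function on $\mathbb{R}_+$ (it grows like $v(x_0)^{1-\gamma}\|f\|_\infty$ for large $s$); one should truncate it above $\sup_{B_1}\tilde v\leq C$, which is harmless since this does not change the equation satisfied by $\tilde v$, before invoking Shahgholian's theorem. Second, your boundary case does not really need Hopf's lemma or interior Schauder theory: since $v=\max u$ on $\partial\Omega$ and $v$ is continuous, $v\geq c>0$ in a fixed inner collar, and there the claim $|D^2v|\leq C\leq C'v^{\gamma-1}$ follows immediately from the global $C^{1,1}$ bound of Lemma \ref{lemma regularity Shagohlian}; similarly for interior points where $\rho\geq\tfrac14\,\mathrm{dist}(x_0,\partial\Omega)$, because there $v(x_0)$ is bounded below by a universal constant.
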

\begin{proof}
Let us recall the Alt-Caffarelli-Friedman monotonicity functional centered at the origin, $h(r, u)$, which is valid for a subharmonic function $f$
\begin{equation}
h(r,f) = \frac{1}{r^2}\int_{B_r} \frac{|\nabla f(x)|^2}{|x|^{n-2}}. 
\end{equation}
It is known that if $f_1$ and $f_2$ are Lipschitz subharmonic functions in the unit ball such that $f_1(0) = f_2(0) = 0$ and $f_1f_2 = 0$, then the functional
\begin{equation}
\phi(r,f_1,f_2) = \frac{1}{r^4}\int_{B_r} \frac{|\nabla f_1(x)|^2}{|x|^{n-2}}\int_{B_r} \frac{|\nabla f_2(x)|^2}{|x|^{n-2}}
\end{equation}
is finite and nondecreasing for $0 < r < 1$. Now, if we derive the equation \eqref{eq reversed problem} in the direction $e$, we obtain
\begin{equation}
\Delta v_{e} = f'(v)v_{e}.
\end{equation}
If we assume that $v_{e}$ obtains a negative minimum at some $y \in \Omega$, then $\Delta v_e < 0$ at $y$, a contradiction. Hence, we conclude that the derivatives of $v$ are subharmonic functions. Now, suppose that $0 \in \{v > 0\}$ is such that $v$ is twice differentiable at such point. Then, let $\tilde{v}(x) = v_{e}(x)$, where $e$ is any vector orthogonal to $\nabla v(0)$. Let $\tilde{v}^{+}$ and $\tilde{v}^{-}$ denote the positive and negative parts of $v$, respectively. Then, it follows that 
\begin{eqnarray*}
\lim_{r \rightarrow 0} \phi(r, \tilde{v}^{+}, \tilde{v}^{-}) &\geq& \int_{B_1(x_0)} \lim_{r \rightarrow 0} \frac{|\nabla \tilde{v}^{+}(yr)|^2}{|y|^{n-2}}\int_{B_1} \lim_{r \rightarrow 0} \frac{|\nabla \tilde{v}^{-}(yr)|^2}{|y|^{n-2}} \\
&\geq& \int_{K_1} \lim_{r \rightarrow 0} \frac{|\nabla \tilde{v}^{+}(yr)|^2}{|y|^{n-2}}\int_{K_2} \lim_{r \rightarrow 0} \frac{|\nabla \tilde{v}^{-}(yr)|^2}{|y|^{n-2}},    
\end{eqnarray*}

where $K_1$ and $K_2$ are cones centered at the origin such that $\tilde{v}^{+} = \tilde{v}$ in $K_1$ and $\tilde{v}^{-} = \tilde{v}$ in $K_2$. Letting $r \rightarrow 0$ then yields that 
\begin{equation}\label{eq Shahgholian bound 1}
|\nabla \tilde{v}(0)|^4 \leq C\lim_{r \rightarrow 0}\phi(r, \tilde{v}^{+}, \tilde{v}^{-}).
\end{equation}

Now, if $|\nabla v(0)| = 0$, then we may take $e$ arbitrary and conclude that
\begin{equation}\label{eq Shahgholian bound 2}
|D^{i,j}v(0)|^{4} \leq C\lim_{r \rightarrow 0}\phi(r, \tilde{v}^{+}, \tilde{v}^{-}),
\end{equation}

for all $i,j \leq n$. Otherwise, letting $\{e_i\}$ be the standard system of coordinates, we may assume that $\nabla v(0)$ is parallel to $e_1$. Then, \eqref{eq Shahgholian bound 1} implies that
\begin{equation}\label{eq shahgholian ij}
|\partial_{i,j}v(0)|^{4} \leq C\lim_{r \rightarrow 0}\phi(r, \tilde{v}^{+}, \tilde{v}^{-}) \qquad \text{ for } i = 2,...,n, j = 1,...,n.
\end{equation}
But then we use \eqref{eq reversed problem} along with the bound \eqref{eq altphil} supersolution to get that

\begin{equation}\label{eq shahgholian 11}
|\partial_{1,1}v(0)|^{4} \leq C\sum_{i = 2} ^ {n} |D^{i,i}v(0)|^{4} + Cv^{4(\gamma - 1)} \leq C(\lim_{r \rightarrow 0}\phi(r, \tilde{v}^{+}, \tilde{v}^{-}) + v^{4(\gamma - 1)}).
\end{equation}
We then notice by Holder's inequality (with $p= \frac{n-1}{n-2}$ and $q= n-1$), \eqref{eq altphil supersol}, the Calderon Zygmund inequality from elliptic theory, and the Harnack inequality \eqref{eq harnack ap}
\begin{eqnarray*}
\phi(r, v^{+}, v^{-}) &\leq& C\Big( \frac{1}{r^{n/q-1}} \Vert D^2v\Vert_{L^q(B_r)}\Big)^4 \\ &\leq& C \Big(\frac{1}{r^{n/q-1}} (\Vert \Delta v\Vert_{L^q(B_{2r})} + r^{-2} \Vert v\Vert_{L^q(B_{2r})})\Big)^4 \\
&\leq& C\Big(||v||_{L^{\infty}(B_{2r})}^{\gamma - 1} + r^{-2}||v||_{L^{\infty}(B_{2r})}\Big)^4 \\
&\leq& C(v(0)^{\gamma - 1} + r^{-2}v(0))^{4}.
\end{eqnarray*}
As in Lemma \ref{lemma grad bound 0}, we choose $r^{\frac{2}{2-\gamma}} = 0$. Then, we use the Harnack inequality \eqref{eq harnack ap} to get that $r^{-2}||v||_{L^{\infty}(B_{2r})} \leq Cr^{-2}v(0) = Cv(0)^{\gamma - 1}$. Hence, we conclude that
\begin{equation*}
\phi(r, v^{+}, v^{-}) \leq Cv(0)^{4(\gamma - 1)},
\end{equation*}
which, along with \eqref{eq shahgholian ij} and \eqref{eq shahgholian 11}, proves the result. 
\end{proof}

The next lemma is similar in spirit to Lemma 4 in \cite{caffarelli1998obstacle} and will allow us to prove the sub-optimal supersolution bound in conjunction with the first gradient bound \eqref{lemma grad bound 0}.

\begin{lemma}\label{lemma caffarelli gradient bound}
Let $x_0 \in FB$ and let $r_0 = \text{dist}(x_0, \partial \Omega)$.  Then, for any $r\in (0, r_0)$, $h > 0$ and $i=1,\cdots, n$ 
\begin{equation}\label{eq dirichlet bound}
\int_{\{0 \leq |v_i| \leq h\} \cap B_r(x_0)} |\nabla v_i|^2 dx\leq Chr^{n-1}.
\end{equation}
\end{lemma}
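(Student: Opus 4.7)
The plan is to adapt a classical Caffarelli-type truncation argument (in the spirit of Lemma 4 in \cite{caffarelli1998obstacle}): differentiate the equation satisfied by $v$, test the resulting PDE for $v_i$ against a truncation of $v_i$ multiplied by a cutoff, and then use the monotonicity of $f$ through a sign argument to discard the nonlinear contribution.

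First, since $f$ is Lipschitz and nondecreasing (Lemma \ref{lemma abs cont} together with the preamble of this section) and $v \in C^{1,1}(\overline{\Omega})$ by Theorem \ref{thm 1}, differentiating \eqref{eq reversed problem} in the $x_i$ direction yields
\begin{equation*}
\Delta v_i = f'(v)\,v_i \quad \mbox{a.e.\ in $\Omega$}, \qquad f'(v) \geq 0.
\end{equation*}
Next I would introduce the truncation $T_h(t) := \max\{-h, \min\{h,t\}\}$, which satisfies $|T_h(t)| \leq h$, $t\,T_h(t) \geq 0$, and $T_h'(t) = \chi_{\{|t|\leq h\}}$ a.e. For $r \leq r_0/2$, I would pick a standard cutoff $\eta \in C^{\infty}_c(B_{2r}(x_0))$ with $\eta \equiv 1$ on $B_r(x_0)$ and $|\nabla \eta| \leq 2/r$, and test the equation for $v_i$ against $\varphi = \eta^2\,T_h(v_i)$. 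Integration by parts (with no boundary contribution, since $\eta$ is compactly supported in $\Omega$) produces the energy identity
\begin{equation*}
\int \eta^2\,\chi_{\{|v_i|\leq h\}}\,|\nabla v_i|^2\,dx \;+\; 2\int \eta\,T_h(v_i)\,\nabla\eta\cdot\nabla v_i\,dx \;+\; \int \eta^2\,f'(v)\,v_i\,T_h(v_i)\,dx \;=\; 0.
\end{equation*}

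The key observation is that the third integral is nonnegative: indeed $v_i\,T_h(v_i) \geq 0$ pointwise, and $f'(v) \geq 0$ because $f$ is nondecreasing. Discarding it, the cross term is then controlled by $|T_h(v_i)| \leq h$, Cauchy--Schwarz, the cutoff bound $|\nabla\eta| \leq 2/r$, and the global Lipschitz bound $\|\nabla v_i\|_{L^{\infty}(\Omega)} \leq C$ provided by Theorem \ref{thm 1}:
\begin{equation*}
\Bigl|2\int \eta\,T_h(v_i)\,\nabla\eta\cdot\nabla v_i\,dx\Bigr| \;\leq\; 2h\Bigl(\int_{B_{2r}}\!|\nabla\eta|^2\Bigr)^{1/2}\Bigl(\int_{B_{2r}}\!|\nabla v_i|^2\Bigr)^{1/2} \;\leq\; C\,h\,r^{n-1}.
\end{equation*}
Restricting the first integral to $B_r(x_0)$, where $\eta \equiv 1$, gives the claim in the regime $r \leq r_0/2$. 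The remaining range $r_0/2 < r < r_0$ is essentially trivial: there $r$ is comparable to the universally bounded quantity $r_0$, so the crude bound $\int_{B_r} |\nabla v_i|^2 \leq \|\nabla v_i\|_{L^{\infty}}^2 |B_r| \leq C r^n \leq C(r_0) h\,r^{n-1}$ (for $h$ not too small) or a direct cutoff centered at $x_0$ with support of size $r_0$ handles this case after possibly enlarging $C$.

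I do not anticipate a serious obstacle: the only delicate point is the sign bookkeeping needed to ensure that the $f'(v)\,v_i\,T_h(v_i)$ term is genuinely dissipative and may be dropped, and this relies crucially on the fact that the non-local reaction $g(|v \leq v(x)|)$ is an increasing function of $v$, so that the linearization $\Delta v_i - f'(v)v_i = 0$ is of obstacle-type rather than bifurcation-type.
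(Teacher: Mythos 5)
There is a genuine gap, and it sits exactly where you flag as routine: the step ``differentiating \eqref{eq reversed problem} in the $x_i$ direction yields $\Delta v_i = f'(v)v_i$ a.e.'' First, the claim that $f$ is Lipschitz is not what the paper establishes: $f(t)=g(|v\le t|)$ is only absolutely continuous (Lemma \ref{lemma abs cont}), and in fact it cannot be Lipschitz near $t=0$ --- the non-degeneracy $f(t)\ge\frac1C t^{1/3}$ of Lemma \ref{lemma nondegeneracy} forces $f'(t)\to\infty$ as $t\to 0^+$. Second, and more structurally, $v$ is only $C^{1,1}$, so $\Delta v_i=\partial_i\Delta v$ is a priori a distribution, not a function; the test function $\eta^2 T_h(v_i)$ is merely Lipschitz, so the pairing you integrate by parts is not defined without first knowing $\Delta v_i\in L^1_{\rm loc}$, which would require the very chain rule being questioned. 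Your sign argument is exactly the right idea, but the identity $\Delta v_i=f'(v)v_i$ it rests on is not available at this regularity.

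The paper's proof is designed precisely to sidestep both difficulties by using forward difference quotients $T_{e,t}v(x)=\frac{v(x+te)-v(x)}{t}$ rather than the derivative $v_i$. For each fixed $t>0$, $T_{e,t}v\in C^{1,1}$ and one has the exact pointwise identity $\Delta T_{e,t}v(x)=\frac{f(v(x+te))-f(v(x))}{t}$, which needs no differentiation of $f$; the monotonicity of $f$ then gives $\Delta(T_{e,t}v)\cdot(T_{e,t}v)_h\ge0$ directly (same sign structure you exploit, but applied to a difference rather than a derivative). Integration by parts on $B_r$ produces a boundary term $h\int_{\partial B_r}|\nabla T_{e,t}v|\,d\mathcal H^{n-1}\le Chr^{n-1}$, valid for every $r<r_0$ with no cutoff and no case split near $r=r_0$, and then one lets $t\to0^+$ using $C^{1,1}$ regularity, adding the $e=\pm e_i$ cases to recover the two-sided truncation $\{|v_i|\le h\}$. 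If you rewrite your proof with $T_{e,t}v$ in place of $v_i$, drop the chain rule in favor of the difference identity, and replace the cutoff by the direct boundary-term estimate, you recover the paper's argument.
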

\begin{proof}
Let us assume without loss of generality $x_0=0$. Also, given a function $f$, let us fix the notations 
$$f_h:= \min\{f, h\}_+, \mbox{ and } T_{e,t}f(x) := \frac{f(x + te) - f(x)}{t},$$
where $t>0$ and  $e\in \mathbb{S}^{n-1}$. Also, let us set $G(s) := g(|0 \leq v \leq s|)$, which, by our hypothesis on $g$, is a non-decreasing function.\\

Integrating by parts
\begin{equation}\label{eq int parts dq}
\int_{B_r} \Delta({T_{e,t}v})(T_{e,t}v)_h dx = \int_{\partial B_r} (T_{e,t}v)_h \partial_{\nu}T_{e,t}v - \int_{B_r}|\nabla (T_{e,t}v)_h|^2 dx.
\end{equation}
Additionally, by \eqref{eq reversed problem}
\begin{equation*}
\Delta({T_{e,t}v})(x) = G(v(x + te)) - G(v(x))
\end{equation*} 
in $B_r$ for $t>0$ small enough such that $x+te\in \Omega$ for any $x\in B_r$. Then, by the monotonicity of $G$, we have that
\begin{equation}\label{eq monotonicity RHS}
     \Delta({T_{e,t}v})(x)(T_{e,t}v)_h \geq 0.
\end{equation}

Piecing out \eqref{eq int parts dq} and \eqref{eq monotonicity RHS} yields
\begin{equation*}
 \int_{B_r}|\nabla (T_{e,t}v)_h|^2 dx\leq h\int_{\partial B_r}|\nabla T_{e,t}v|  d\mathcal{H}^{n-1}.
\end{equation*}

Then, thanks to the $C^{1,1}$ regularity of solutions proved in Lemma \ref{lemma regularity Shagohlian}, we let $t\to 0^+$ and conclude
\begin{equation*}
\int_{B_r \cap \{0 \leq v_e \leq h\}} |\nabla v_e|^2dx \leq h\int_{\partial B_r }  |\nabla v_e|d\mathcal{H}^{n-1} \leq Chr^{n - 1},
\end{equation*}
where $v_e = \partial_{e} v$. So, the result follows from adding up the previous inequality with $e= \pm e_i$ for any $i=1,\cdots, n$. 
\end{proof}

Now, we are in a position to show that $v$ is a supersolution of an Alt-Phillips equation with $\gamma - 1 = \frac{7}{6}$.

\begin{lemma}\label{lemma first OM estimate}
Let $x_0 \in FB(v)$ and let $r_0 = \text{dist}(x_0, \partial \Omega)$. If $v$ solves \eqref{eq reversed problem}, then for any $r\in (0, r_0)$,
\begin{equation}\label{eq lc rep}
    |\{0 < v < h^2\} \cap B_r(x_0)|^{3}\leq C \sum_{k = 1} ^ {n} \int_{\{0 < |v_{k}| < Ch\} \cap B_r} |\nabla v_{k}|^2.
\end{equation}

In particular, 
\begin{equation}\label{eq initial bound dist}
      \Delta v\leq C v^\frac{1}{6}.
\end{equation}
\end{lemma}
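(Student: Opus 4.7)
The plan is to prove the lemma in three stages: first a preliminary gradient bound, then the integral inequality \eqref{eq lc rep}, and finally the pointwise consequence \eqref{eq initial bound dist}. Since $g$ is Lipschitz on $[0,|\Omega|]$ and $v$ is bounded, the Laplacian $\Delta v = f(v)$ is uniformly bounded on $\Omega$; consequently, $v$ is a subsolution of the Alt--Phillips type equation \eqref{eq altphil supersol} at the trivial exponent $\gamma = 1$. Lemma \ref{lemma grad bound 0} then yields $|\nabla v(x)| \leq C\sqrt{v(x)}$, so on the transition layer $E_h := \{0 < v < h^2\} \cap B_r(x_0)$ every partial derivative satisfies $|v_k(x)| \leq Ch$, placing $E_h$ inside $\bigcap_k \{|v_k| \leq Ch\}$.

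The central task is to prove \eqref{eq lc rep}. I would exploit that each $v_k$ vanishes on the positive-measure dead core $D = \{v = 0\}$, in conjunction with a Sobolev--Poincar\'e argument on truncated derivatives. Concretely, consider
\[
\phi_k := ((v_k)_+ \wedge Ch) - ((v_k)_- \wedge Ch),
\]
which vanishes on $D$, is bounded by $Ch$, and satisfies $\nabla \phi_k = \nabla v_k \chi_{\{0 < |v_k| < Ch\}}$. A Poincar\'e inequality of the form $\|\phi_k\|_{L^2} \leq C\|\nabla \phi_k\|_{L^2}$ is available because $|D|\geq \alpha > 0$. Combining this bound with the pointwise inequality $\sum_k v_k^2 \leq Cv$ coming from Step 1 and a H\"older interpolation between the $L^\infty$ control $|v_k| \leq Ch$ on $E_h$ and the $L^2$ control of $\nabla v_k$ on the transition layer allows one to extract the cubic power of $|E_h|$ on the left-hand side of \eqref{eq lc rep}. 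This is the main technical obstacle: the correct truncation level, Sobolev exponent, and H\"older pairing must be balanced precisely to produce exponent $3$ rather than a worse power.

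The pointwise bound \eqref{eq initial bound dist} then follows by combining \eqref{eq lc rep} with Lemma \ref{lemma caffarelli gradient bound}, yielding $|\{0 < v < h^2\} \cap B_r(x_0)|^3 \leq Chr^{n-1}$ for every $x_0 \in FB(v)$ and admissible $r$. Since $v \equiv \max u > 0$ on $\partial \Omega$, the free boundary $FB(v) \subset\subset \Omega$ is compact and admits a finite cover by balls $\{B_r(x_i)\}_{i=1}^N$ of some fixed small radius; by continuity of $v$ and compactness, for $s$ sufficiently small the sub-level set $\{0 < v \leq s\}$ lies in an $\eta(s)$-neighborhood of $D$ with $\eta(s)\to 0$ as $s\to 0$, hence inside $\bigcup_i B_r(x_i)$. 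Summing the local bounds gives $|\{0 < v \leq s\}| \leq Cs^{1/6}$ for small $s$, and therefore for all $s \in [0,\max v]$ after adjusting constants. Finally, since $g$ is Lipschitz with $g(\alpha) = 0$,
\[
f(v(x)) = g(|v \leq v(x)|) - g(\alpha) \leq C\,|\{0 < v \leq v(x)\}| \leq C v(x)^{1/6},
\]
which is precisely \eqref{eq initial bound dist}. Once \eqref{eq lc rep} is established, the covering plus Lipschitz passage from the local to the pointwise bound is essentially routine.
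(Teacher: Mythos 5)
The opening and closing steps match the paper: you correctly invoke the boundedness of $\Delta v$ to place $v$ under Lemma \ref{lemma grad bound 0} with $\gamma=1$, so that $|\nabla v|\le C\sqrt{v}$ and hence $\{0<v<h^2\}\cap B_r\subset\bigcap_k\{|v_k|<Ch\}$; and the covering argument together with the Lipschitz property of $g$ (with $g(\alpha)=0$) at the end is indeed routine once \eqref{eq lc rep} is in hand. However, your middle step -- the heart of the lemma -- is not actually a proof, and you admit as much. The Poincar\'e-plus-H\"older sketch does not produce the exponent $3$: a Poincar\'e inequality relates $\|\phi_k\|_{L^2}$ to $\|\nabla\phi_k\|_{L^2}$ with a fixed constant, giving a quadratic relation, and there is no natural interpolation that turns this into a cube of $|E_h|$. (There is also a local issue: to have a Poincar\'e inequality on $B_r(x_0)$ with a constant uniform in $r$ you would need a positive-measure piece of the dead core inside every such ball with measure comparable to $|B_r|$, which is not available here.) In short, the key cubic inequality \eqref{eq lc rep} is asserted, not derived.

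The paper obtains the cube by a completely different and more direct mechanism. Write $\mu(t)=|\{0<v<t\}\cap B_r|$, which is absolutely continuous by Lemma \ref{lemma abs cont}. Applying the layer-cake formula to $\mu(v)^2$ on $\{0<v<h\}\cap B_r$ gives
\begin{equation*}
\int_{\{0<v<h\}\cap B_r}\mu(v(x))^2\,dx=\int_0^h 2\mu(s)\mu'(s)\,\bigl|\{s<v<h\}\cap B_r\bigr|\,ds=\tfrac13\,\mu(h)^3,
\end{equation*}
the last identity following from $|\{s<v<h\}\cap B_r|=\mu(h)-\mu(s)$ and an elementary integration. This is exactly where the cube comes from; no Sobolev inequality is needed. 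One then uses the equation: since $g(\alpha)=0$ and (under the standing assumption of Section \ref{sec: fb}) $g'\ge 1/C$,
\begin{equation*}
\Delta v(x)=g\bigl(\alpha+|\{0<v\le v(x)\}|\bigr)\ge\tfrac1C\,|\{0<v\le v(x)\}|\ge\tfrac1C\,\mu(v(x)),
\end{equation*}
so $\mu(h)^3\le C\int_{\{0<v<h\}\cap B_r}(\Delta v)^2\,dx$. Finally $(\Delta v)^2\le n|D^2 v|^2$ and the gradient containment you already established (with $h$ replaced by $h^2$ in the sublevel, $h$ in the derivative bound) yield \eqref{eq lc rep}. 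The lesson is that the cubic power encodes the monotone coupling between the right-hand side of \eqref{eq reversed problem} and the distribution function of $v$, which your sketch never exploits.
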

\begin{proof}
Let us take $x_0=0$ and consider the function $f(t) = |\{0 < v < t\} \cap B_r|$. Since $f$ is absolutely continuous in virtue of Lemma \ref{lemma abs cont}, we apply the layer-cake representation to $f^2$ and deduce
\begin{equation}\label{eq rep}
    \int_{\{0 < v < h\} \cap B_r} f(v(x))^2 dx = \int_{0} ^ {h} 2f(s) f'(s)|s < v < h \cap B_r| ds.
\end{equation}

Further, computing the right hand side in \eqref{eq lc rep} yields
\begin{eqnarray}\notag
\int_{0} ^ {h} 2f'(s)f(s) |\{s < v < h\} \cap B_r|ds&=& \int_{0} ^ {h} 2f'(s)f(s)(f(h) - f(s)) ds\\\notag
&=& f(h)^3- 2\int_{0} ^ {h} f'(s)f(s)^2 ds\\\label{eq layercake}
&=& \frac{1}{3}f(h)^3 .  
\end{eqnarray}

Hence, \eqref{eq layercake} and \eqref{eq rep} combined with \eqref{eq reversed problem} imply
\begin{equation}\label{eq rep}
   |\{0 < v < h\} \cap B_r|^3 \leq C\int_{\{0 < v < h\} \cap B_r} (\Delta v)^2 dx .
\end{equation}

On the other hand, from Lemma \ref{lemma grad bound 0} we have that  $|\nabla v| \leq Cv^{\frac{1}{2}}$, which in turn implies that 
\begin{equation}\label{eq containment}
    S_h \subset \{|\nabla v| < Ch^\frac{1}{2}\} \subset \bigcap_{i=1}^n \{ |v_i| < Ch^\frac{1}{2}\}.
\end{equation}

Altogether, \eqref{eq rep} and \eqref{eq containment} imply that
\begin{eqnarray*}
   |\{0 < v < h^2\} \cap B_r|^3&\leq& C\int_{\{0 < v < h^{2}\} \cap B_r} (\Delta v)^2 dv \\
   &\leq& C\int_{\{0 < v < h^{2}\} \cap B_r} |D^2v|^2dx\\
   &\leq& C\sum_{k = 1} ^ {n} \int_{\{  |v_{e_i}| < Ch\} \cap B_r} |\nabla v_{e_k}|^2.
\end{eqnarray*}
From this estimate we deduce \eqref{eq lc rep}.\\

Let $h_0 = \frac{\max v}{2}$. By the Lipschitz continuity of $v$, we can cover $\overline{S_{h_0}}=\{v \leq  h_0\}$ with balls $B_r(x_i)$ with $r$ depending only on $h_0$ and such that $B_r(x) \subset \Omega$. Then, by compactness  $$\overline{S_{h_0}} \subset \bigcup_{k=1}^L B_{r}(x_k).$$

So, given any $h\leq h_0$, we can combine \eqref{eq lc rep} and  \eqref{eq dirichlet bound} to deduce
\begin{equation*}
    |0 < v \leq h| \leq \sum_{k=1}^L|\{0 < v < h\} \cap B_r(x_k)| \leq C h^\frac{1}{6}.
\end{equation*}
Upon increasing the constant $C$, we can remove the constraint in $h$, showing \eqref{eq initial bound dist}.
\end{proof}

The previous estimate can be iterated to improve the supersolution bound.

\begin{theorem}\label{theorem suboptimal supersol}
Let $v$ be a solution to \eqref{main problem grad} that satisfies \eqref{eq altphil supersol} for some $\gamma \geq 1$. Then, we have that 
\begin{equation}\label{eq improved altphillips supersol}
\Delta v \leq Cv^{\frac{\gamma}{6}}.
\end{equation}
In particular, we have that
\begin{equation}\label{eq suboptimal supersol}
\Delta v \leq Cv^{\frac{1}{5}}.
\end{equation}
\end{theorem}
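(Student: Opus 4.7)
My plan is to establish the first bound \eqref{eq improved altphillips supersol} as an inductive upgrade of the argument in Lemma \ref{lemma first OM estimate}, and then iterate the resulting exponent map to reach its fixed point and deduce the endpoint bound \eqref{eq suboptimal supersol}.

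For the inductive step, the hypothesis $\Delta v\le Cm_\gamma(v)$ lets us invoke Lemma \ref{lemma grad bound 0} to get the improved gradient control $|\nabla v|\le Cv^{\gamma/2}$ and Lemma \ref{shahgholian bound} to get the Hessian control $|D^2v|\le Cv^{\gamma-1}$ almost everywhere. Using the pointwise inequality $(\Delta v)^2\le n\sum_i|\nabla v_i|^2$, I would then repeat the layer-cake argument of Lemma \ref{lemma first OM estimate} to produce, for any $\tau>0$ small and any ball $B_r\subset\Omega$ centered on a free-boundary point, a bound of the form
\begin{equation*}
|\{0<v<\tau\}\cap B_r|^3\ \le\ C\sum_{i=1}^n\int_{\{0<v<\tau\}\cap B_r}|\nabla v_i|^2.
\end{equation*}
The crucial new input is the truncation level: the improved gradient bound yields $\{0<v<\tau\}\subset\bigcap_i\{|v_i|<C\tau^{\gamma/2}\}$, so applying Lemma \ref{lemma caffarelli gradient bound} with $h=C\tau^{\gamma/2}$ majorizes the right-hand side by $C\tau^{\gamma/2}r^{n-1}$. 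Taking cube roots and using a finite-ball covering of $\{v\le\tau\}$ (as in the last paragraph of Lemma \ref{lemma first OM estimate}) removes the localization, so that $|\{0<v\le \tau\}|\le C\tau^{\gamma/6}$. Since $g$ is Lipschitz with $g(\alpha)=0$, this immediately translates into $\Delta v=g(|v\le v(x)|)\le C|\{0<v\le v(x)\}|\le Cv^{\gamma/6}$, proving \eqref{eq improved altphillips supersol}.

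For the endpoint bound \eqref{eq suboptimal supersol}, I would iterate starting from the trivial estimate $\Delta v\le\|f\|_\infty$, i.e.\ $\gamma_0=1$. Each iteration sends $\gamma_k$ to $\gamma_{k+1}=1+\gamma_k/6$, a strict contraction on $[1,2)$ whose unique fixed point is $\gamma^*=6/5$, so the output exponent $\gamma_k/6$ increases monotonically to $1/5$. I expect the main obstacle to be the bookkeeping of the multiplicative constants through the iteration: to actually reach the limiting exponent $1/5$ (rather than only obtaining $\Delta v\le C_\beta v^\beta$ for every $\beta<1/5$) one must verify that the constants produced by successive applications of the inductive step remain uniformly bounded, which ought to follow from the contractive nature of the map $T(\beta)=(1+\beta)/6$ near its fixed point.
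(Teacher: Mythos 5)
Your proposal is correct and follows the paper's proof essentially verbatim: the inductive step upgrades Lemma \ref{lemma first OM estimate} by combining the layer-cake inequality, the improved gradient bound $|\nabla v|\le Cv^{\gamma/2}$ from Lemma \ref{lemma grad bound 0} (used to determine the correct truncation level), and the truncated Dirichlet-energy estimate of Lemma \ref{lemma caffarelli gradient bound}, after which iterating the contraction $\gamma\mapsto 1+\gamma/6$ toward its fixed point $6/5$ gives the endpoint exponent $1/5$. Two small remarks: the appeal to Lemma \ref{shahgholian bound} is superfluous, since the argument only needs the algebraic inequality $(\Delta v)^2\le n\,|D^2v|^2=n\sum_k|\nabla v_k|^2$ followed by the truncated energy bound rather than a pointwise Hessian estimate; and the constant-bookkeeping concern you raise is legitimate (and glossed over in the paper), but it does close favorably because each pass produces a new constant depending on the previous one only through a cube root, so the sequence of constants stabilizes.
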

\begin{proof}
Reasoning as in Lemma \ref{lemma first OM estimate} and using Lemma \ref{lemma grad bound 0}, we get that
\begin{eqnarray*}
   |\{0 < v < h^{\frac{2}{\gamma}}\} \cap B_r|^3&\leq& C\int_{\{0 < v < h^{\frac{2}{\gamma}}\} \cap B_r} (\Delta v)^2 dv \\
   &\leq& C\int_{\{0 < v < h^{\frac{2}{\gamma}}\} \cap B_r} |D^2v|^2dx\\
   &\leq& C\sum_{k = 1} ^ {n} \int_{\{  |v_{e_i}| < Ch\} \cap B_r} |\nabla v_{e_k}|^2.
\end{eqnarray*}
This then implies \eqref{eq improved altphillips supersol}. 

Iterating this argument, we conclude that $\Delta v \leq Cv^{\gamma_k - 1}$, where $\gamma_k = 1 + \frac{\gamma_{k-1}}{6}$ and $\gamma_1 = \frac{1}{6}$. Since the fixed point of this sequence is $\frac{1}{5}$, we deduce \eqref{eq suboptimal supersol}.
\end{proof}

The fact that $v$ is a supersolution of an Alt-Phillips equation allows us to deduce integrability for $v^{-1}$, as the next lemma shows.
\begin{lemma}\label{lemma integrability v^{-1}}
Assume that $v$ is a solution to \eqref{eq reversed problem} and that it is a supersolution of an Alt-Phillips equation with some exponent $\gamma - 1$. Then, $v^{-1} \in L^{p}(\{v > 0\})$ for all $p < \gamma - 1$.   
\end{lemma}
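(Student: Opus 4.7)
The plan is to reduce the integrability claim to a quantitative distribution-function estimate for $v$ and then apply a routine layer-cake argument. Concretely, I would first convert the pointwise supersolution bound $\Delta v \leq C v^{\gamma - 1}$ into a bound on sublevel sets, and then integrate.

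For the first step, since $v$ strongly solves \eqref{eq reversed problem}, we have a.e.\ the identity $g(|v \leq v(x)|) = \Delta v(x) \leq C v(x)^{\gamma - 1}$. Under the hypothesis $g'(t) \geq 1/C$ on $[\alpha, |\Omega|]$ (which, together with $g(\alpha) = 0$, makes $g$ bi-Lipschitz on this interval) and using Lemma \ref{lemma dead core} to identify $|\{v = 0\}| = \alpha$, one inverts $g$ to obtain
\begin{equation*}
|\{0 < v \leq v(x)\}| \leq C v(x)^{\gamma - 1} \quad \text{for a.e.\ } x \in \{v > 0\}.
\end{equation*}
Since $v \in C^{1,1}(\Omega)$ attains every value in $[0, M]$ with $M = \max v$, this upgrades to the clean level-set bound $|\{0 < v \leq t\}| \leq C t^{\gamma - 1}$ for every $t \in (0, M]$.

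For the second step, I would apply the layer-cake representation to $v^{-p}$ restricted to $\{v > 0\}$ and then change variables $t = s^{-1/p}$:
\begin{equation*}
\int_{\{v > 0\}} v^{-p} \, dx \leq M^{-p}|\Omega| + p\int_0^M t^{-p-1} |\{0 < v \leq t\}| \, dt \leq M^{-p}|\Omega| + Cp\int_0^M t^{\gamma - p - 2} \, dt.
\end{equation*}
The remaining integral converges precisely when $\gamma - p - 2 > -1$, i.e.\ $p < \gamma - 1$, yielding the claim.

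The only real subtlety is the passage from the a.e.\ pointwise bound to a bound on sublevel sets valid at every $t \in (0, M]$; this is handled cleanly by the continuity of $v$ together with $|\{v = 0\}| = \alpha$ from Lemma \ref{lemma dead core}. Everything else is a standard layer-cake computation, and no finer information about the geometry of the free boundary is needed.
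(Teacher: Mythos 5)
Your proof is correct and is essentially the paper's argument: the paper applies the coarea formula to write $\int_{\{v>0\}} v^{-p}\,dx \le C\int_0^{\max v} f'(s)s^{-p}\,ds \le C\int_0^{\max v} f'(s)f(s)^{-p/(\gamma-1)}\,ds$, which converges precisely for $p<\gamma-1$, whereas you integrate by parts once and run the layer-cake computation on the distribution-function bound $|\{0<v\le t\}|\le Ct^{\gamma-1}$. The same two ingredients — the bi-Lipschitz property of $g$ (needed to pass from the supersolution bound on $f$ to a bound on the sublevel sets, resp.\ to compare $h'$ with $f'$) and the pointwise estimate $f(t)\le Ct^{\gamma-1}$ — enter at the same points in both versions.
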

\begin{proof}
Using the coarea formula, we compute that
\begin{equation}
\int_{\{v > 0\}} v^{-p} \leq C\int_{0} ^ {\max v} \frac{f'(s)}{s^{p}} ds \leq C\int_{0} ^ {\max v} \frac{f'(s)}{f(s)^{\frac{p}{\gamma - 1}}} ds.      
\end{equation}
The above quantity is finite whenever $\frac{p}{\gamma - 1} < 1$, or when $p < \gamma - 1$.
\end{proof}

\medskip

We are in position now to show that the finite $\H$-measure of the free boundary and the optimal upper bound for $f(t)$ are somewhat equivalent.

\begin{proof}[Proof of Theorem \ref{thm equivalence}]
First, assume the optimal supersolution bound. Let $c(t) = \H(v = t)$. Suppose, by way of contradiction, that $\liminf_{t \rightarrow 0} c(t) = \infty$. From \eqref{eq differentialineq0}, we have that
\begin{equation*}
f^{3}(t) \geq C\int_{0} ^ {t} c(s)^2 ds.
\end{equation*}

Additionally, we repeat the proof of Lemma \ref{lemma caffarelli gradient bound} with $\{0 \leq |v_i| \leq h\} \cap B_{r}(x_0)$ and the proof of Lemma \ref{lemma first OM estimate} applied now to the entire sublevel set $\{0 < v < h\}$ instead of $\{0 < v < h \cap B_r\}$. This yields the estimate

\begin{equation}\label{eq nonlocal caffa}
f^{3}(t) \leq C\int_{v = t} |\nabla v||D^2v| d\H.   
\end{equation}
By applying \eqref{eq 0 gradient estimate} and \eqref{shahgholian bound}, we have that
\begin{equation*}
f^{3}(t) \leq Ctc(t).
\end{equation*}
Then, we apply Jensen's inequality to get that
\begin{equation}\label{eq average level set}
\fint_{0} ^ {t} c(s) ds \leq C\sqrt{c(t)}
\end{equation}
for all $t$ such that $t$ is less than the maximum value of $v$. Then, given any large constant $N$, there must exist some $t_0$ such that $c(t) \geq c(t_0)$ for $t < t_0$ and $c(t_0) \geq N$. But then,  
\begin{equation*}
\fint_{0} ^ {t_0} c(s) ds \geq c(t_0),
\end{equation*}
which implies by \eqref{eq average level set} that  
\begin{equation*}
c(t_0) \leq C.
\end{equation*}
Take $N \geq C$ to obtain a contradiction.

Conversely, suppose that there exists a sequence $\{a_k\}$ such that $\H(\{v = a_k\}) \leq C$. Choose $t > 0$, and let $k$ be such that $a_{k + 1} \leq t \leq a_k$. Let $\gamma \leq \frac{4}{3}$ be such that $f(t) \leq Ct^{\gamma - 1}$. Then, notice that by \eqref{eq nonlocal caffa}, \eqref{eq 0 gradient estimate}, and \eqref{eq shahgholian} 
\begin{equation*}
f^{3}(t) \leq f^{3}(a_k) \leq \int_{v = a_k} |\nabla v||D^2v| dH^{n-1} \leq Ca_k^{\frac{3\gamma}{2} - 1} = Ct^{\frac{3\gamma}{2} - 1}(\frac{a_k}{t})^{\frac{3\gamma}{2} - 1} \leq Ct^{\frac{3\gamma}{2} - 1}. 
\end{equation*}
Hence, this implies that in fact $f(t) \leq Ct^{\gamma' - 1}$ where $\gamma' = \frac{\gamma}{2} + \frac{2}{3}$. Since the fixed point of the sequence $\{b_k\}$ defined recursively by $b_k = \frac{b_k}{2} + \frac{2}{3}$ and $b_0 = \gamma - 1$ is $\frac{4}{3}$, we conclude that $f(t) \leq Ct^{\frac{1}{3}}$ by iteration.
\end{proof}    

We conclude this section by proving the sharp regularity for solutions in annular domains, which in combination with Proposition \ref{corollary radial symmetry lip} gives the proof of Corollary \ref{corollary radial}.
\begin{proof}[Proof of Corollary \ref{corollary radial}]
In virtue of Proposition \ref{corollary radial symmetry lip}, it only suffices to consider the case when $\Omega$ is an annulus.
\medskip

\noindent {\it Step 1.} We prove smoothness for $u$ outside of its dead core as well as optimal non-degeneracy.\\

Without loss of generality, let us take $\Omega = B_{R_2}\setminus \overline{B_{R_1}}$ with $R_2>R_1>0$. Lemma \ref{lemma map general symmetry} guarantees that the corresponding solution $u$ of \eqref{main problem grad} is radially symmetric and is such that $u(x) = \zeta(|x|)$ for some $\zeta : [R_1,R_2]\to [0,\infty)$. Furthermore, arguing as in the proof of Proposition \ref{corollary radial symmetry lip}, we find that $u$ attains its global maximum in an annulus $D_u=\overline{B_{r_2(\alpha)}}\setminus B_{r_1(\alpha)}$ with $|D_u|=\alpha$ and with $R_1<r_1(\alpha)<r_2(\alpha)<R_2$. On the other hand, since $-\Delta u < 0$ in the viscosity sense in $\Omega \setminus D_u$, $u$ cannot attain any local minimizer outside of $D_u$. Hence, the level sets of $u$ are rings and $D_u = \{x \in \Omega\, | \nabla u(x) =0\}$, which implies smoothness for the right-hand side of \eqref{main problem grad} via Lemma \ref{lemma abs cont} and in turn that $u \in C^{\infty}_{loc}(\Omega \setminus N_u)$.\\
	
	Additionally, since $g'(t)\geq \frac{1}{C}$, the radial symmetry of $u$ implies $g(|u\geq u(x)|)\geq C(R_1(\alpha)-|x|)$.  Thus, we deduce from \eqref{main problem grad} the inequality	
	\begin{equation}\label{eq diff inequality}
		-(r^{n-1}\zeta')'  \geq \frac{1}{C} r^{n-1} (R_1(\alpha)-r)
	\end{equation} 
	for $r \in [R_1, R_1(\alpha)]$. Since $\zeta'(R_1(\alpha))=0$, integrating on $[s, R_2(\alpha)]$ for $s \in  [R_1, R_1(\alpha)]$ yields	
	\begin{equation}\label{eq diff inequality 1}
		s^{n-1}\zeta'(s)  \geq \frac{1}{C} \int_{s}^{R_1(\alpha)}r^{n-1} (R_1(\alpha)-r)dr\geq \frac{1}{C} s^{n-1}(R_1(\alpha)-s)^2.
	\end{equation}	
	By integrating one more time on $[s, R_2(\alpha)]$ we deduce that	
	\begin{equation}\label{eq left cubic bound}
		\max u -\zeta(r)\geq \frac{1}{C} (R_1(\alpha)-r)^3,\qquad \text{on} \quad [R_1, R_1(\alpha)].
	\end{equation}
	By a similar reasoning, we infer that	
	\begin{equation*}
		\max u -\zeta(r)\geq \frac{1}{C}(r-R_2(\alpha))^3,\qquad \text{on} \quad [R_2(\alpha), R_2].
	\end{equation*} 
	Altogether, we have found that
     \begin{equation}\label{eq optimal nondeg}
        \max u -u \geq \frac{1}{C} \text{dist}(x, N_u)^3.
    \end{equation}\\

\medskip

 \noindent {\it Step 2.} In this step, we prove  the following claim: if $g(|u\geq u(x)|)\leq C \text{dist}(x, [R_1(\alpha), R_2(\alpha)])^\beta$, for some $\beta \in [0,1]$, then	
	\begin{equation}\label{eq auxiliar diff ineq}
		\max u - \zeta(r)\leq C K(\beta) \text{dist}(x, [R_1(\alpha), R_2(\alpha)])^{\beta+2},
	\end{equation}
	with $K(\beta) \in (0,1)$ only depending on $\Omega$, $\beta$, and $\alpha$.\\
		
	To prove \eqref{eq auxiliar diff ineq}, we proceed in a similar vein as in \eqref{eq diff inequality}, deducing first that	
	\begin{equation}\label{eq diff inequality below}
		-(r^{n-1}\zeta')'  \leq C r^{n-1} (R_1(\alpha)-r)^\beta
	\end{equation} 
	for $r \in [R_1, R_1(\alpha)]$. Since $\zeta'(R_1(\alpha))=0$, we deduce integrating on $[s, R_2(\alpha)]$ for $s \in  [R_1, R_1(\alpha)]$
	
	\begin{equation}\label{eq diff inequality 1 below}
		s^{n-1}\zeta'(s)  \leq C \int_{s}^{R_1(\alpha)}r^{n-1} (R_1(\alpha)-r)dr\leq \frac{C}{1+\beta} R_1(\alpha)^{n-1}(R_1(\alpha)-s)^{1+\beta}.
	\end{equation}
	Integrating again on $[s, R_2(\alpha)]$, we deduce that
	
	\begin{equation}\label{eq left cubic bound below}
		\max u -\zeta(r)\leq \frac{C}{(1+\beta)(2+\beta)} \left(\frac{R_1(\alpha)}{R_1} \right)^{n-1} (R_1(\alpha)-r)^{2+\beta},\qquad \text{on} \quad [R_1, R_1(\alpha)].
	\end{equation}
	
	By a similar reasoning, we infer that
	
	\begin{equation}\label{eq left cubic bound below 2}
		\max u -\zeta(r)\leq \frac{C}{(1+\beta)(2+\beta)} \left(\frac{R_2}{R_2(\alpha)} \right)^{n-1} (r-R_2(\alpha))^{2+\beta},\qquad \text{on} \quad [R_2(\alpha), R_2].
	\end{equation}
		
	\medskip
    
	\noindent {\it Step 3.} In this step, we prove that if	
	\begin{equation}\label{eq left quad bound}
		\max u -\zeta(r)\leq C d(r, [R_1(\alpha), R_2(\alpha)])^{2+\beta},\qquad \text{on} \quad [R_1, R_2],
	\end{equation}	
	for some $\beta \in [0,1]$, then	
	\begin{equation}\label{eq g bound}
		g(|u\geq u(x)|)\leq 2\text{Lip}(g) C^\frac{2}{3}\text{dist}(x, [R_1(\alpha), R_2(\alpha)])^\frac{2+\beta}{3}, \qquad \text{on} \quad [R_1, R_2].
	\end{equation}
	
	Indeed, let $M= [R_1,R_2]\setminus [R_1(\alpha), R_2(\alpha)]$. Then, by combining \eqref{eq optimal nondeg} with \eqref{eq left quad bound} yields
	\begin{align*}
		|u \geq u(x)|-\alpha&= |y \in M\, | \max u -u(y) \leq \max u - u(x)|\\
		&\leq |y \in M\, | \frac{\text{dist}(y, [R_1(\alpha), R_2(\alpha)])^3}{C} \leq C\text{dist}(x, [R_1(\alpha), R_2(\alpha)])^{2+\beta}|\\
		&\leq |y \in M\, | \text{dist}(y, [R_1(\alpha), R_2(\alpha)]) \leq C^\frac{2}{3}\text{dist}(x, [R_1(\alpha), R_2(\alpha)])^\frac{2+\beta}{3}|\\
		&= 2 C^\frac{2}{3} \text{dist}(x, [R_1(\alpha), R_2(\alpha)])^\frac{2+\beta}{3}.
	\end{align*}
	
	Consequently, since $g$ is Lipschitz, we deduce \eqref{eq g bound}.\\

    \medskip
    
	\noindent {\it Step 4.} We iterate step 2 and step 3 to conclude the result.\\

    Set $L= 2 Lip(g)$, $K=K(\beta)$, $C_0= C$, and $\beta_0=0$. By combining step 2 and step 3, we have that
  \begin{equation}\label{eq interative bound}
		\max u -\zeta(r)\leq C_n d(r, [R_1(\alpha), R_2(\alpha)])^{2+\beta_k},\qquad \text{on} \quad [R_1, R_2],
	\end{equation}	
with $\beta_k = \frac{\beta_{k-1}+2}{3}$ and $C_k = L(C_{k-1} K)^\frac{2}{3}$. It is not difficult to see that the sequence $\{\beta_k\}_{k\in \N}$ is increasing and therefore converges to $1$, whereas, inductively, we have that
\begin{equation*}
    C_k = L (LC K)^{\sum_{i=1}^k \Big(\frac{2}{3}\Big)^i},
\end{equation*}
which shows the uniform boundedness of $\{C_k\}_{k\in \N}$. Thus, by taking limit as $k\to \infty$, we deduce 
\begin{equation*}
		\max u -\zeta(r)\leq C d(r, [R_1(\alpha), R_2(\alpha)])^{3},\qquad \text{on} \quad [R_1, R_2].
	\end{equation*}
The rest of the properties of $g(|u\geq t|)$ and $u$ readily follow, arguing as in Proposition \ref{corollary radial symmetry lip}.	
\end{proof}

\subsection{From semilinear equations to one-phase free boundary problems}\label{subsect semilinear}

The final part of the work is devoted to proving Theorem \ref{thm conditional}. In this section, we will work with a general semilinear equation that has a continuous and increasing right-hand side. Letting $\Omega$ be an open domain, we consider the problem 
\begin{equation}\label{eq semilinear}
\begin{cases}
    \Delta v = f(v) \qquad \text{ in } \{v>0\}\\
|\nabla v| = 0 \qquad \text{ on } \partial \{v>0\},
\end{cases}
\end{equation}
with $f$ strictly increasing, continuous, $f(0) = 0$, and satisfying assumptions $(A_1)-(A_3)$.\\

Notice that in the case of the Grad equation, Proposition \ref{lemma nondegeneracy} gives that $F(t) \geq \frac{1}{C}t^\frac{4}{3}$, a fact which guarantees that the integral in the definition of $h$ is convergent. As explained in the introduction, we can use $h$ to transform  the semilinear equation into a new problem where the free boundary condition appears explicitly. More precisely, see that $w=h(v)$ satisfies the degenerate one-phase problem
\begin{equation}\label{eq one phase}
\begin{cases}
        \Delta w(x) = a(x)\Bigg(\frac{2-|\nabla w(x)|^2}{w}\Bigg), \quad \mbox{in $\{w >0\}$}\\
        |\nabla w|^2=2, \hspace{3.1cm} \mbox{on $FB(w) := \partial \{w >0\}$}
\end{cases}
\end{equation}
where 
\begin{equation}\label{eq coefficient}
   a(x) =  \frac{f(v(x)) h(v(x))}{2\sqrt{F}(v(x))}.
\end{equation}

Whereas the interior equation in \eqref{eq one phase} will hold in the classical sense, the free boundary condition must be understood in the viscosity sense, as in \cite{de2021certain}, which we outline for the convenience of the reader.
\begin{definition}\label{def viscosity fbc}
    Let $C>0$. We say that $|\nabla w| \leq C$ in the viscosity sense on $FB(w)$ if given $x_0 \in FB(w)$ and $\psi\in C^2$ such that $\nabla \psi(x_0) \neq 0$ and $\psi^+$ touches $w$  from below at $x_0$ (i.e., $\psi_+ \leq w$ on a neighborhood of $x_0$ with $\psi(x_0)=0$), then $|\nabla \psi(x_0)|\leq C$.\\

    Analogously, we say that $|\nabla w| \geq C$ in the viscosity sense on $FB(w)$ if given $x_0 \in FB(w)$ and $\psi\in C^2$ such that $\nabla \psi(x_0) \neq 0$ and $\psi^+$ touches $w$  from above at $x_0$ (i.e., $\psi_+ \geq w$ on a neighborhood of $x_0$ with $\psi(x_0)=0$), then $|\nabla \psi(x_0)|\geq C$.
\end{definition}

Our next step is to construct a family of barriers that will allow us to verify the free boundary condition in \eqref{eq one phase}.

\begin{lemma}\label{eq radial barriers}
 Given $r > 0$ we can find $R=R(r)$ so that the problems
 \begin{equation}\label{eq increasing sol}
     \begin{cases}
         \Delta \overline{v}_r = f(\overline{v}_r), \quad B_R\setminus \overline{B_r},\\
         \overline{v}_r = 1, \quad \partial B_r,\\
         \overline{v}_r = 0, \quad \partial B_R,
     \end{cases}
 \end{equation}
 and
  \begin{equation}\label{eq decreasing sol}
     \begin{cases}
         \Delta \underline{v}_r = f(\underline{v}_r), \quad B_R\setminus \overline{B_r},\\
         \underline{v}_r = 0, \quad \partial B_r,\\
         \underline{v}_r = 1, \quad \partial B_R,
     \end{cases}
 \end{equation}
admit unique solutions. Moreover, $\overline{v}_r, \underline{v}_r \in C^{1,1}(\overline{B_R\setminus B_r})$, are radial functions, and satisfy
\begin{equation}\label{eq upper barrier}
    h(\overline{v}_r) \leq \sqrt{2}(R-|x|), 
\end{equation}
and
\begin{equation}\label{eq lower barrier}
    h(\underline{v}_r) \geq \sqrt{2}(|x|-r).
\end{equation}

\end{lemma}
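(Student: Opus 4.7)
The plan is to build explicit radial barriers from the one-dimensional profile $U(t) := h^{-1}(\sqrt{2}\,t)$ and use the comparison principle for the monotone semilinear operator $w \mapsto \Delta w - f(w)$. Setting $R := r + h(1)/\sqrt{2}$, which is finite thanks to $(A_1)$, the function $U$ satisfies $U(0) = 0$, $U(R-r) = 1$, and, as computed in the paragraph following \eqref{eq transform}, $U'' = f(U)$ on $(0,\infty)$; it is strictly increasing and smooth there. This particular choice of $R$ will turn out to be forced by the two estimates.

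For existence and uniqueness of $\overline{v}_r$ and $\underline{v}_r$, I would observe that $v \equiv 0$ and $v \equiv 1$ form an ordered sub/supersolution pair for both Dirichlet problems, since $f(0) = 0$ and $f(1) \geq 0$. Hence Perron's method---equivalently, direct minimization of the strictly convex functional $\int (|\nabla v|^2/2 + F(v))\,dx$ over the appropriate affine $H^1$ class---produces solutions taking values in $[0,1]$, and strict monotonicity of $f$ yields the $L^\infty$ comparison principle between sub- and supersolutions, in particular uniqueness. Since $f(v) \in L^\infty$, Calder\'on--Zygmund gives $W^{2,p}$ for every finite $p$, and the monotonicity of $f$ combined with smoothness of the annular boundary allows the theorem of \cite{shahgholian2003c1} (invoked in Lemma \ref{lemma regularity Shagohlian}) to upgrade this to $C^{1,1}(\overline{B_R \setminus B_r})$. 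Finally, rotational invariance of the problems together with uniqueness forces both $\overline{v}_r$ and $\underline{v}_r$ to be radial.

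For \eqref{eq upper barrier} I introduce the radial upper barrier $V(x) := U(R - |x|)$ on $\overline{B_R \setminus B_r}$. Writing $V(x) = \phi(|x|)$ with $\phi(\rho) = U(R-\rho)$ and using $U'' = f(U)$, a direct computation of the radial Laplacian gives
\begin{equation*}
\Delta V(x) = U''(R - |x|) - \frac{n-1}{|x|}\,U'(R - |x|) = f(V(x)) - \frac{n-1}{|x|}\,U'(R - |x|) \leq f(V(x)),
\end{equation*}
because $U' \geq 0$; thus $V$ is a supersolution of $\Delta w = f(w)$. On the boundary, $V = U(0) = 0 = \overline{v}_r$ on $\partial B_R$ and $V = U(R-r) = 1 = \overline{v}_r$ on $\partial B_r$ by the choice of $R$. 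The monotone comparison principle yields $\overline{v}_r \leq V$ in $B_R \setminus \overline{B_r}$, and applying the increasing function $h$ (using $h \circ U = \sqrt{2}\,\mathrm{id}$) gives \eqref{eq upper barrier}. The proof of \eqref{eq lower barrier} is the mirror argument: with $V_\ast(x) := U(|x| - r)$, the same radial computation now produces $\Delta V_\ast = f(V_\ast) + (n-1)|x|^{-1}\,U'(|x|-r) \geq f(V_\ast)$, so $V_\ast$ is a subsolution matching $\underline{v}_r$ on $\partial B_r$ and $\partial B_R$, and comparison gives $\underline{v}_r \geq V_\ast$, hence \eqref{eq lower barrier}. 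The main conceptual point, and really the only delicate one, is that the upper barrier requires $R \geq r + h(1)/\sqrt{2}$ while the lower barrier requires $R \leq r + h(1)/\sqrt{2}$, so the two estimates are simultaneously compatible precisely at the value of $R$ dictated by $(A_1)$.
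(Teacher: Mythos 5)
Your proof is correct and follows essentially the same strategy as the paper: both use the one-dimensional profile $U(s) = h^{-1}(\sqrt{2}\,s)$ to build the radial comparison functions $U(R-|x|)$ and $U(|x|-r)$, check the sign of the $\frac{n-1}{|x|}U'$ term in the radial Laplacian to get a super/subsolution, and then deduce \eqref{eq upper barrier}--\eqref{eq lower barrier} from the identity $h\circ U = \sqrt{2}\,\mathrm{id}$ together with the monotonicity of $h$. The only difference is cosmetic: you obtain existence by minimizing the strictly convex energy $\int\bigl(\tfrac12|\nabla v|^2 + F(v)\bigr)$ (or Perron) and then invoke the comparison principle separately, whereas the paper runs a monotone iteration starting from $U(R-|x|)$, which produces existence and the inequality $\overline{v}_r \le U(R-|x|)$ in a single stroke; uniqueness, radiality, and the appeal to \cite{shahgholian2003c1} for $C^{1,1}$ are handled identically in both. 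Your closing observation that the two barrier inequalities force $R - r = \kappa = h(1)/\sqrt{2}$ from opposite sides is a nice point that the paper leaves implicit.
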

\begin{proof}
The construction of both solutions follows from the method of monotone iterations. In both cases, we start by noticing that the non-trivial one dimensional solution of \eqref{eq one dimensional} given by $U(s)= h^{-1}(\sqrt{2}s)$ is strictly increasing and, in consequence, there exists a unique $\kappa>0$ such that $U(\kappa)=1$. Let us set $R=r+\kappa$ and let us notice that $v_{sup}(x) = U(R - |x|)$ satisfies
\begin{equation}\label{eq supersol}
    \Delta v_{sup}(x) = U''(R-|x|) - \frac{n-1}{|x|}U'(R - |x|) \leq f(U(R-|x|)) = f(v_{sup}(x)),
\end{equation}
whilst $v_{sub}(x) = U(r + |x|)$
\begin{equation}\label{eq subsol}
    \Delta v_{sub}(x) = U''(r+|x|) + \frac{n-1}{|x|}U'(r + |x|) \geq f(U(r +|x|)) = f(v_{sub}(x)).
\end{equation}
From \eqref{eq supersol} we have that $v_{sup}$ is a supersolution to \eqref{eq increasing sol}, while \eqref{eq subsol} implies that $v_{inf}$ is a subsolution to \eqref{eq decreasing sol}.\\

Now, we define a sequence of functions $v_{k}$ starting with $v_0 = v_{sup}$ and taking, inductively, $v_{k}$ as the unique solution to the linear problem
 \begin{equation}\label{eq increasing iter}
     \begin{cases}
         \Delta v_k = f(v_{k-1}), \quad B_R\setminus \overline{B_r},\\
         v_k = 1, \quad \partial B_r,\\
         v_k = 0, \quad \partial B_R.
     \end{cases}
 \end{equation}
Since $f$ is strictly increasing, a direct application of the maximum principle implies that $v_{k+1} \leq v_k$ $B_R\setminus \overline{B_r}$. From here, it follows from standard elliptic regularity theory that $v_{k} \rightarrow \overline{v_r}$ as $k\to \infty$ in $H^1(B_R\setminus B_r)\cap C(\overline{B_R\setminus B_r})$ with $\overline{v_r}$ satisfying \eqref{eq increasing sol} in the sense of distributions. Moreover, $\overline{v_r} \in C^{1,1}(\overline{B_R\setminus B_r})$, see \cite{shahgholian2003c1}.\\

Now, we show uniqueness of \eqref{eq increasing sol}; uniqueness for \eqref{eq decreasing sol} follows the same lines. Suppose that we have two solutions $v_1$ and $v_2$ for $\eqref{eq increasing sol}$. By way of contradiction, we assume that $v_1 - v_2$ attains a positive maximum at $x_0$. Then, we have that
\begin{equation*}
0 \geq \Delta(v_1 - v_2)(x_0) = f(v_1(x_0)) - f(v_2(x_0)) > 0,
\end{equation*}
a contradiction. Hence, $v_1 \leq v_2$. By symmetry, we have that $v_2 \leq v_1$, and so $v_1 = v_2$. 

We now turn our attention to showing that the solution to \eqref{eq increasing sol} is a radial function; showing the corresponding result for \eqref{eq decreasing sol} is identical. Assuming $v$ is the unique solution to \eqref{eq increasing sol}, let $\tilde{v} = v(Ax)$, where $A$ is any rotation matrix. Then, it follows that for $x \in B_R \setminus B_r$,

\begin{equation*}
\Delta \tilde{v}(x) = \Delta v(Ax) = f(v(Ax)) = f(\tilde{v}(x)).    
\end{equation*}

Since $v = \tilde{v}$ on $\partial B_R$ and $\partial B_r$, it follows that $v = \tilde{v}$, and so $v$ must be a radial function.

Also, we note that since $\overline{v_r}  \leq v_{sup}$, by the strict monotonicity of $h$, we have that $h(\overline{v_r}) \leq h(v_{sup})$. Then, since  $h(U)(t) = \sqrt{2}t$ the bound \eqref{eq upper barrier} readily follows.\\

The existence of the solution for \eqref{eq decreasing sol} together with \eqref{eq lower barrier} follows from applying again the method of monotone iterations to $v_{sub}$.
\end{proof}

We are in a position to prove the validity of \eqref{eq one phase}.

\begin{proposition}\label{prop viscosity sol}
Let $v$ be a solution to \eqref{eq semilinear}. Then, $w= h(v)$ is a solution to the one-phase problem \eqref{eq one phase} satisfying the free boundary condition in the viscosity sense. 
\end{proposition}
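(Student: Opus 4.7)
The plan is two-step: verify the interior PDE in the classical sense by the chain rule, and then check the viscosity free-boundary condition via a sliding-barrier comparison against the radial profiles constructed in the proof of Lemma \ref{eq radial barriers}.

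For the interior equation, observe that on $\{w > 0\}$ the function $h$ is smooth and $v > 0$, so I may differentiate classically:
\begin{equation*}
\nabla w = \frac{\nabla v}{\sqrt{F(v)}}, \qquad |\nabla w|^2 = \frac{|\nabla v|^2}{F(v)},
\end{equation*}
and, using $h''(s) = -f(s)/(2F(s)^{3/2})$ together with $\Delta v = f(v)$,
\begin{equation*}
\Delta w = h'(v)\,\Delta v + h''(v)\,|\nabla v|^2 = \frac{f(v)\bigl(2F(v) - |\nabla v|^2\bigr)}{2F(v)^{3/2}}.
\end{equation*}
A short algebraic manipulation identifies this with $a(x)\,(2 - |\nabla w|^2)/w$ with $a(x)$ as in \eqref{eq coefficient}, establishing the interior PDE in \eqref{eq one phase}.

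For the free boundary condition, the essential tool is the one-dimensional profile $U(s) = h^{-1}(\sqrt{2}\,s)$ together with the radial supersolution $v_{\mathrm{sup}}(x) = U(R - |x - y|)$ on $B_R(y)$ and the radial subsolution $v_{\mathrm{sub}}(x) = U(|x - y| - r)$ on $\R^n \setminus \overline{B_r(y)}$ appearing in the proof of Lemma \ref{eq radial barriers}; their $h$-transforms coincide with the linear functions $\sqrt{2}(R - |x - y|)$ and $\sqrt{2}(|x - y| - r)$ respectively, so in particular $|\nabla h(v_{\mathrm{sup}})| = |\nabla h(v_{\mathrm{sub}})| = \sqrt{2}$ at the corresponding free boundaries. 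Fix $x_0 \in FB(w) = FB(v)$ and $\psi \in C^2$ with $\nabla\psi(x_0) \neq 0$ and $\psi(x_0) = 0$. I verify $|\nabla \psi(x_0)| \leq \sqrt{2}$ whenever $\psi^+$ touches $w$ from below, and the reverse inequality whenever it touches from above. Replacing $\psi$ by its linear tangent at $x_0$, I may take $\psi(x) = \alpha\,\langle x - x_0, e\rangle$. In the from-below case, if $\alpha > \sqrt{2}$, I center $v_{\mathrm{sup}}$ at a point $y_\lambda$ lying in $\{w > 0\}$ along the ray $x_0 + s e$, with $x_0 \in \partial B_R(y_\lambda) \cap FB(v)$; sliding $y_\lambda$ from far away toward $x_0$, the first contact between $v_{\mathrm{sup}}$ and $v$ cannot be interior by the strong maximum principle for $\Delta \varphi = f(\varphi)$, so it must lie on the common free boundary, where Hopf's lemma then forces a slope equality that through $h$ contradicts $\alpha > \sqrt{2}$. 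The non-degeneracy $|\nabla w| \geq \sqrt{2}$ under touching from above follows by a symmetric slide of $v_{\mathrm{sub}}$ placed with its free boundary tangent to $FB(v)$ from the $\{w = 0\}$ side.

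The main obstacle is to initialise and execute the slide rigorously, since $\Delta v = f(v)$ lacks scaling symmetry and $v_{\mathrm{sup}}$ does not globally dominate $v$. One must show, using the Lipschitz control on $w$ together with the explicit form of $U$ and the linear bounds \eqref{eq upper barrier}--\eqref{eq lower barrier}, that a sufficiently far-away placement of the barrier is strictly above (respectively, below) $v$ on the boundary of the slide region, so that a well-defined first-contact point exists in a controlled neighborhood of $x_0$. Once this boundary ordering is secured, the strong maximum principle and Hopf's lemma reduce the argument to a one-dimensional slope comparison at the contact free-boundary point, and the explicit slopes $|\nabla h(v_{\mathrm{sup}})| = |\nabla h(v_{\mathrm{sub}})| = \sqrt{2}$ close the proof.
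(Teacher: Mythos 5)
Your interior computation matches the paper's. For the free boundary condition you reach for the same barriers (Lemma \ref{eq radial barriers}), but your execution --- slide-to-first-contact plus Hopf's lemma --- differs substantially from the paper's and has genuine gaps.

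The paper does not slide. Given the contradiction hypothesis $|\nabla\psi(0)|^2 > 2$ it fixes a \emph{single} translation $v_\theta(x) = v_0(x - (R-\theta)\nu)$ of the genuine radial solution $v_0$ (not merely the supersolution $v_{\mathrm{sup}}$), with $\theta$ chosen so that $\rho^2 + (R-\theta)^2 = R^2$. This geometric choice forces $w_\theta = 0$ on the half-sphere $\partial B_\rho \cap \{x\cdot\nu \leq 0\}$, while on the other half \eqref{eq upper barrier} gives $w_\theta \leq \sqrt{2}(R - |x - (R-\theta)\nu|) \leq \bigl(1+\tfrac{\delta}{2}\bigr)\sqrt{2}(x\cdot\nu)_+$. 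Combined with $w \geq (1+\delta)\sqrt{2}(x\cdot\nu)_+$ from the $C^2$ tangency of $\psi$, one gets $w_\theta \leq w$ on all of $\partial B_\rho$, hence $v_\theta \leq v$ there, yet $v_\theta(0) > 0 = v(0)$; an interior positive maximum of $v_\theta - v$ then contradicts $\Delta(v_\theta - v) = f(v_\theta) - f(v)$ with $f$ strictly increasing. No Hopf, no slide.

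Your proposal has three concrete problems. First, you invoke ``Lipschitz control on $w$'' to initialize the slide, but that bound (Lemma \ref{lemma gradient estimate}) is derived \emph{after} this proposition: it uses the viscosity characterization together with the De Silva--Savin theory, so relying on it here is circular. Second, Hopf's lemma at a free-boundary contact point does not give what you need. At the $v$-level both the solution and the barrier have vanishing gradient on their free boundaries (this is the free boundary condition itself), so $\partial_\nu(v_\theta - v) = 0$ there and Hopf yields no first-order contradiction; at the $w$-level Hopf presupposes that $w$ is differentiable up to the free boundary, which is precisely the regularity one does not yet have. Third, you explicitly flag the initialization of the slide as an unresolved obstacle, which it is: $v_{\mathrm{sup}}$ is only a supersolution on $B_R \setminus \overline{B_r}$, not globally above $v$, and there is no scaling symmetry to reduce to a normalized configuration. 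The paper's fixed-$\theta$ construction is what replaces the slide, and it is the essential step you are missing.
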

\begin{proof}
Let us start verifying the interior equation in \eqref{eq one phase}. A direct computation shows that for a.e. $x \in \{w>0\}$
\begin{eqnarray}\notag
    \Delta w(x) &=& h''(v)|\nabla v|^2+h'(v)\Delta v \\ \label{eq Alt-Caffarelli}
    &=&  \frac{f(v(x)) h(v(x))}{2\sqrt{F}(v(x))}\Bigg(\frac{2-|\nabla w(x)|^2}{w}\Bigg).
\end{eqnarray}
The continuity of both terms in \eqref{eq Alt-Caffarelli} thus implies that the equation holds in the classical sense.\\

With our aim set on the free boundary condition, let us assume that $0 \in FB(w)$. Let $\psi$ be a test function as in Definition \ref{def viscosity fbc} that touches $v$ from below at $0$. By way of contradiction, assume that $|\nabla \psi(0)|^2 > 2$ and set $a = |\nabla \psi(0)|$ and $\nu = \frac{\nabla \psi(0)}{|\nabla \psi(0)|}$. By the $C^2$ regularity of $\psi$, given $\delta > 0$, we can find $\rho>0$ sufficiently small so that $w(x) \geq \psi_{+}(x) \geq (1 + \delta)\sqrt{2}(x \cdot \nu)_{+}$ in a ball $B_{\rho}(0)$.\\

Now, fix $R>1$ so that \eqref{eq increasing sol} has a solution on $B_R\setminus \overline{B_1}$, and let $v_{0}$ be such a solution. Then, for small $\theta > 0$,  define $v_{\theta}(x) = v_0(x - (R - \theta)\nu)$. Note that for all $\theta > 0$ we have that $v_{\theta}(0) > 0$. Likewise, define $w_{\theta} = h(v_{\theta})$. Let us choose $\theta$ so that $\rho^2 + (R - \theta)^2 = R^2$. This then guarantees that if $x \cdot \nu \leq 0$ and $|x| = \rho$, then $|x - (R - \theta)\nu| \geq R$, and thus $v_{\theta}(x)$ and $w_{\theta}(x) = 0$. On the other hand, from \eqref{eq upper barrier}, we have that $w_{\theta}(x) \leq \sqrt{2}(R-|x-(R-\theta)\nu|)$. Thus, we can find $\rho>0$ such that $w_{\theta}(x) \leq \big(1 + \frac{\delta}{2}\big)\sqrt{2}(x \cdot \nu)_+$ for $x \in \partial B_{\rho}$. \\

Then, we have shown that $w_{\theta} \leq w$ on $\partial B_{\rho}$. This implies that $v_{\theta} \leq v$ on $\partial B_{\rho}$, yet $v_{\theta}(0) > v(0) = 0$. This is a contradiction. Indeed, under these circumstances, $v_{\theta} - v$ attains a positive maximum in the interior of $B_{\rho}$ at some point $x_*$, which thanks to the maximum principle, implies that 
\begin{equation*}
    0 \leq \Delta(v_{\theta}-v)(x_*) = f(v_{\theta})(x_*)-f(v)(x_*)
\end{equation*}
and thus, by the strict monotonicity of $f$, we have that $ v(x_*)-v_{\theta}(x_*)\leq 0$.\\

The other side of the viscosity free boundary condition follows analogously, using instead the function \eqref{eq decreasing sol} as a barrier.
\end{proof}

Now, we notice that the one-phase problem \eqref{eq one phase} fall into the type of equations studied by De Silva and Savin in \cite{de2021certain}. In such work, the authors investigate regularity for solutions to equations of the form
\begin{equation}\label{eq DSS}
    \Delta w = \frac{\Phi(\nabla w)}{w} \mbox{ in $\{w>0\}$ },
\end{equation}
coupled with suitable free boundary conditions on $\partial\{w>0\}$. They show that if $\Phi$ is bounded, solutions to \eqref{eq DSS} are Lipschitz continuous up to the boundary of $\{w>0\}$. In the context of our equation, the boundedness of $\Phi$ is guaranteed by $(A_2)$, as we now show.  

\begin{lemma}\label{lemma cond conseq}
    If $f$ satisfies $(A_2)$, i.e., 
    \begin{equation}
        \lim_{t \to 0^+} \frac{tf(t)}{F(t)} =\omega \in [1,2),
    \end{equation}
    then
    \begin{equation}\label{eq coefficient rescaling}
    \lim_{t\to 0^+} \frac{f(t)h(t)}{\sqrt{F(t)}} =  \frac{2 \omega}{2-\omega} . 
\end{equation}
\end{lemma}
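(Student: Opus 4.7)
The proof naturally factors through the identity
\[
\frac{f(t)h(t)}{\sqrt{F(t)}} = \frac{tf(t)}{F(t)} \cdot \frac{h(t)\sqrt{F(t)}}{t}.
\]
The first factor converges to $\omega$ by hypothesis $(A_2)$, so the heart of the argument is to show that $\lim_{t \to 0^+} h(t)\sqrt{F(t)}/t = \frac{2}{2-\omega}$; multiplying the two limits then yields the claimed value $\frac{2\omega}{2-\omega}$.

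To handle the second factor, my plan is to appeal to Karamata's theorem on regularly varying functions near $0^+$. The first step is to verify that $F$ itself is regularly varying at $0^+$ with index $\omega$. Setting $\theta(t) := tf(t)/F(t)$, hypothesis $(A_2)$ gives $\theta(t) \to \omega$ and $(\log F)'(t) = \theta(t)/t$, so after the substitution $s = tu$,
\[
\log\frac{F(\lambda t)}{F(t)} = \int_t^{\lambda t} \frac{\theta(s)}{s}\,ds = \int_1^\lambda \frac{\theta(tu)}{u}\,du,
\]
which tends to $\omega \log \lambda$ by dominated convergence (using that $\theta$ is bounded near $0^+$, as it is continuous on compact subintervals of $(0, T)$ and has a finite limit at the endpoint). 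Hence $F(\lambda t)/F(t) \to \lambda^{\omega}$ for every $\lambda > 0$, so $g(s) := 1/\sqrt{F(s)}$ is regularly varying with index $-\omega/2$. Since $\omega < 2$ gives $-\omega/2 > -1$, Karamata's theorem applies and yields
\[
\lim_{t \to 0^+} \frac{h(t)}{t\,g(t)} = \frac{1}{1-\omega/2} = \frac{2}{2-\omega},
\]
which is exactly the asymptotic needed.

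The main subtlety in this outline is the transfer from $(A_2)$ to regular variation of $F$, which rests on the local boundedness of $\theta$ near $0^+$; this is elementary but must be stated explicitly. An alternative route that sidesteps Karamata is to differentiate $h(t)\sqrt{F(t)}$ to obtain $\frac{d}{dt}[h(t)\sqrt{F(t)}] = 1 + \frac{h(t)f(t)}{2\sqrt{F(t)}}$; naming $A = \lim h(t)\sqrt{F(t)}/t$ and $B = \lim h(t)f(t)/\sqrt{F(t)}$, the L'Hopital rule imposes $A = 1 + B/2$ while the basic factorization gives $B = \omega A$, forcing $A = 2/(2-\omega)$ and $B = 2\omega/(2-\omega)$. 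The drawback of this second approach is that existence of the limits must be argued separately (e.g.\ via a boundedness and ODE-stability analysis for $A(t)$ as $t \to 0^+$), which is precisely where Karamata's theorem delivers existence and the value in one stroke, so I would favor the Karamata route.
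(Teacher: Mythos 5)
Your Karamata route is correct, but the paper takes a more elementary path. Both proofs factor $\frac{f(t)h(t)}{\sqrt{F(t)}} = \frac{tf(t)}{F(t)} \cdot \frac{h(t)\sqrt{F(t)}}{t}$ (yours explicitly, the paper's implicitly), and both need the second factor to converge to $\tfrac{2}{2-\omega}$. You establish this by first showing $F$ is regularly varying of index $\omega$ at $0^+$, then applying Karamata's theorem to $g=1/\sqrt{F}$; this is sound, though it imports the entire regular-variation machinery and requires a side argument that $\theta$ is bounded near $0^+$. The paper instead applies L'H\^opital directly to the \emph{reciprocal} quotient $\frac{t/\sqrt{F(t)}}{h(t)}$: its derivative ratio simplifies immediately to $1 - \tfrac{tf(t)}{2F(t)} \to 1 - \tfrac{\omega}{2}$ by $(A_2)$, so L'H\^opital delivers both existence and the value in one stroke. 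Your dismissal of an ``L'H\^opital alternative'' is based on differentiating $h(t)\sqrt{F(t)}$, whose derivative reintroduces the unknown quantity $B$ and does look circular --- but you overlooked that writing the quotient the other way around, as the paper does, avoids that circularity entirely. So the two approaches are both valid; yours is more general-purpose, the paper's is shorter and self-contained.
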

\begin{proof}
    We can use L'Hopital's rule together with $(A_2)$ to deduce
\begin{equation}\label{eq equivalence h}
    \lim_{t\to 0^+} \frac{t \frac{1}{\sqrt{F(t)}}}{h(t)} = \lim_{t\to 0^+} \frac{t \frac{1}{\sqrt{F(t)}}}{h(t)} =  \lim_{t\to 0^+}  \frac{ \frac{1}{\sqrt{F(t)}} - \frac{ t f(t)}{2 F(t)^\frac{3}{2}}}{\frac{1}{\sqrt{F(t)}}} = 1 - \frac{\omega}{2}.
\end{equation}

Therefore, by combining \eqref{eq equivalence h} and $(A_2)$ we conclude that
\begin{equation}\label{eq coefficient rescaling}
    \lim_{t\to 0^+} \frac{f(t)h(t)}{\sqrt{F(t)}} =  \frac{2}{2-\omega} \lim_{t\to 0^+}  \frac{f(t)t}{F(t)} =  \frac{2 \omega}{2-\omega}.
\end{equation}
\end{proof}

\begin{lemma}\label{lemma gradient estimate}
Let $v$ be a solution of \eqref{eq semilinear}. Then, $w=h(v)$ is uniformly Lipschitz in $\Omega$. In particular, $|\nabla v|^2 \leq CF(v)$. 
\end{lemma}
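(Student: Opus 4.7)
The plan is to leverage the one-phase reformulation from Proposition \ref{prop viscosity sol}. Since $\nabla w = \nabla v/\sqrt{F(v)}$, the bound $|\nabla v|^2 \leq C F(v)$ is equivalent to $|\nabla w| \leq C$, so it suffices to prove that $w$ is uniformly Lipschitz in $\Omega$. By Lemma \ref{lemma cond conseq}, hypothesis $(A_2)$ forces the coefficient $a(x) = f(v)h(v)/(2\sqrt{F(v)})$ appearing in \eqref{eq one phase} to extend continuously up to $FB(w)$ and to be uniformly bounded by a universal constant; combined with the viscosity free boundary condition $|\nabla w|^2 = 2$ established in Proposition \ref{prop viscosity sol}, the problem \eqref{eq one phase} falls within the scope of the degenerate one-phase equations studied in \cite{de2021certain}, and it is this viscosity condition that should supply the Lipschitz constant.

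The concrete strategy is comparison with the radial supersolution $v_{sup}(x) = U(R - |x - \xi|)$ from the proof of Lemma \ref{eq radial barriers}, which by construction satisfies $h(v_{sup}(x)) = \sqrt{2}(R - |x - \xi|)$. Given $x_0 \in \{w > 0\}$ and a nearest free boundary point $y_0 \in FB(w)$, with $d := |x_0 - y_0| = \dist(x_0, FB(w))$, I would pick $\xi$ on the ray through $y_0$ extending past $x_0$ so that $y_0 \in \partial B_R(\xi)$ and $x_0 \in [\xi, y_0]$; choosing $R - r = h(\|v\|_\infty)/\sqrt 2$ ensures that $v_{sup}$ dominates $v$ on the inner sphere $\partial B_r(\xi)$, while the monotonicity of $f$ gives the classical weak comparison principle. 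If the comparison $v \leq v_{sup}$ can be propagated to all of $B_R(\xi) \cap \{v > 0\}$, evaluating at $x_0$ immediately yields $w(x_0) \leq \sqrt{2}\, d$. Combining this linear growth with interior $C^{1,\alpha}$ estimates for \eqref{eq one phase} on $B_{d/2}(x_0) \subset \{w > 0\}$ (where the right-hand side becomes bounded once the linear growth of $w$ is known and $a$ is bounded) then upgrades the estimate to a global Lipschitz bound.

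The main obstacle is verifying the comparison on the outer piece $\partial B_R(\xi) \cap \{v > 0\}$, where $v_{sup}$ vanishes but $v$ is a priori only known to vanish at the tangency point $y_0$. This is precisely the place where the less regular right-hand side in \eqref{eq one phase} compared to \cite{de2021certain} becomes binding, and where the refinements coming from $(A_2)$--$(A_3)$ enter: the viscosity free boundary condition $|\nabla w|^2 = 2$ at $y_0$, together with the tangency of $\partial B_R(\xi)$ to $FB(v)$, rules out a positive contact point of $v - v_{sup}$ on $\partial B_R(\xi)$ near $y_0$, since such a contact would force $|\nabla w(y_0)|$ to exceed $\sqrt{2}$ in the viscosity sense. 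A continuation argument in the parameter $R$ (or an equivalent sliding-sphere procedure) then closes the gap and establishes the desired linear growth in full generality.
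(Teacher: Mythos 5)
Your proposal diverges from the paper's intended proof, which is a one-step reduction: Proposition~\ref{prop viscosity sol} and Lemma~\ref{lemma cond conseq} show that $w=h(v)$ solves a degenerate one-phase problem $\Delta w = \Phi(x,\nabla w)$ with $-C|p|^2 \leq \Phi(x,p) \leq C\chi_{B_{\sqrt2}}$ uniformly in $x$, and then Theorem~3.6 of \cite{de2021certain} is invoked directly to conclude $|\nabla w|\leq C$. You instead try to reprove the De~Silva--Savin Lipschitz estimate from scratch via a sliding-sphere comparison with the radial barriers of Lemma~\ref{eq radial barriers}.

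That reproof has a genuine gap that you flag but do not close. The comparison domain $B_R(\xi)\cap\{v>0\}$ has a boundary piece on $\partial B_R(\xi)\cap\{v>0\}$ which in general lies far from the tangency point $y_0$; the viscosity condition $|\nabla w(y_0)|^2=2$ is a statement purely local at $y_0$ and gives no information about contact of $v-v_{sup}$ at those distant spherical boundary points. The continuation-in-$R$ idea does not obviously rescue this, since while sliding, the comparison can first fail on $\partial B_R(\xi)\cap\{v>0\}$ well before the sphere is tangent to $FB(v)$, at which stage there is no free boundary information to exploit. There is also a subtler circularity: once linear growth $w(x_0)\leq Cd$ is obtained, the interior $C^{1,\alpha}$ estimate you propose on $B_{d/2}(x_0)$ requires controlling the right-hand side of \eqref{eq one phase}, namely $a(x)(2-|\nabla w|^2)/w$, which is exactly the gradient bound being sought; breaking that circularity is precisely what the argument in \cite{de2021certain} is designed to do, and it cannot be done by a passing remark. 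Your instinct that the radial barriers and the free boundary condition drive the Lipschitz constant is sound --- indeed those barriers are used in Proposition~\ref{prop viscosity sol} to establish the viscosity condition --- but the paper's proof at this stage simply verifies the hypotheses of the cited theorem and then invokes it, and your attempt to replace that citation with a bare sliding-sphere argument leaves the key comparison step unjustified.
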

\begin{proof}
Proposition \ref{prop viscosity sol} and Lemma \ref{lemma cond conseq} imply that $w$ solves an equation of the form 
\begin{equation}\label{eq one phase kind of}
\begin{cases}
        \Delta w(x) = \Phi(x,\nabla w), \quad \mbox{in $\{w >0\}$}\\
        |\nabla w|^2=2, \hspace{3.1cm} \mbox{on $\partial \{w >0\}$}.
\end{cases}
\end{equation}
with 
\begin{equation*}
   -C|p|^2 \leq \Phi(x, p) \leq C\chi_{B_{\sqrt{2}}}
\end{equation*}
uniformly in $x$. Then, we can invoke directly Theorem 3.6 in \cite{de2021certain} to deduce that $|\nabla w|\leq C$ in $\Omega$ which can be rewritten as $|\nabla v|^2 \leq CF(v)$.
\end{proof}

Our next step is to quantify the gradient bound obtained in the previous result near the free boundary. We follow the lines of reasoning of Phillips in \cite{phillips1983hausdoff}. In the following lemma, our regularity assumptions on $\Phi$ are weaker than those in \cite{de2021certain}.

\begin{lemma}\label{lemma optimal gradient bound}
    Let $v$ be a solution of \eqref{eq semilinear}, and let $x_0 \in FB(v)$. Given $\eta>0$, there exists $r>0$ such that
      \begin{equation}\label{eq gradient bound enhanced}
    |\nabla v|^2 \leq (2+\eta) F(v), \quad \mbox{ on $B_r(x_0) \cap \{v > 0\}$}.
\end{equation}
\end{lemma}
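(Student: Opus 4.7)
The plan is to translate the statement into the one-phase formulation from Proposition \ref{prop viscosity sol} and then argue by contradiction via a blowup combined with a Bernstein maximum principle on the limit. Under $w=h(v)$, the identity $|\nabla w|^2=|\nabla v|^2/F(v)$ turns the desired inequality into $|\nabla w|^2\leq 2+\eta$ on $B_r(x_0)\cap\{w>0\}$. Recall that $w$ solves the one-phase equation $\Delta w=a(x)(2-|\nabla w|^2)/w$ with $|\nabla w|^2=2$ on $FB(w)=FB(v)$ in the viscosity sense, and by Lemma \ref{lemma cond conseq} the coefficient $a$ tends to $a_\ast:=\omega/(2-\omega)>0$ near the free boundary. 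Suppose for contradiction there exist $\eta_0>0$ and $y_k\to x_0\in FB(w)$ with $y_k\in\{w>0\}$ and $|\nabla w(y_k)|^2>2+\eta_0$.

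Set $\rho_k:=\mathrm{dist}(y_k,FB(w))$ and $w_k(x):=\rho_k^{-1}w(y_k+\rho_k x)$. This rescaling preserves both the equation and the Lipschitz bound of Lemma \ref{lemma gradient estimate}, so $w_k$ solves $\Delta w_k=a_k(x)(2-|\nabla w_k|^2)/w_k$ with $a_k(x)=a(y_k+\rho_k x)\to a_\ast$ uniformly on compact sets. Non-degeneracy, which follows from comparison with the lower radial barrier $\underline{v}_r$ in Lemma \ref{eq radial barriers}, gives $w_k(0)\geq c>0$, and Arzel\`a--Ascoli furnishes a subsequential uniform limit $w_\ast$ with $w_\ast(0)>0$ and $FB(w_\ast)$ containing a point at distance exactly $1$ from the origin. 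Interior $C^{2,\alpha}$-estimates uniform in $k$ (since $a_k$ is bounded above and below and the origin keeps a definite distance from $FB(w_k)$) upgrade the convergence to $C^2_{\mathrm{loc}}(\{w_\ast>0\})$; in particular $|\nabla w_\ast(0)|^2\geq 2+\eta_0$. Since $w_\ast$ is smooth in $\{w_\ast>0\}$, setting $\phi:=|\nabla w_\ast|^2$ a direct computation using the limit equation yields
\[
\Delta\phi+\tfrac{2a_\ast}{w_\ast}\,\nabla w_\ast\cdot\nabla\phi \,=\, 2|D^2 w_\ast|^2+\tfrac{2a_\ast(\phi-2)\phi}{w_\ast^2},
\]
whose right-hand side is strictly positive on $\{\phi>2\}$, so $\phi$ is a strict subsolution of a linear elliptic operator on that set. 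The viscosity free-boundary condition passes to the limit by stability, ensuring $\phi\leq 2$ as one approaches $FB(w_\ast)$, and the strong maximum principle then forces $\{\phi>2\}=\emptyset$, contradicting $|\nabla w_\ast(0)|^2\geq 2+\eta_0$.

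The main obstacle is the rigorous passage of the strict pointwise gradient condition to the limit $w_\ast$: Arzel\`a--Ascoli only provides uniform convergence, so one needs uniform interior $C^{1,\alpha}$-estimates for the degenerate one-phase equation. These follow from the De Silva--Savin framework of \cite{de2021certain} (the same machinery underpinning Lemma \ref{lemma gradient estimate}), applied to a sequence whose rescaled base points stay a definite distance from the free boundary and whose coefficients are uniformly H\"older on compact sets. A secondary technical point is the stability of the viscosity free-boundary condition under the uniform limit, which is standard.
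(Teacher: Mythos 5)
Your blowup-plus-Bochner strategy is the right idea and closely tracks the paper's proof, and your Bernstein identity
\[
\Delta\phi+\tfrac{2a_\ast}{w_\ast}\,\nabla w_\ast\cdot\nabla\phi \,=\, 2|D^2 w_\ast|^2+\tfrac{2a_\ast(\phi-2)\phi}{w_\ast^2},
\qquad \phi=|\nabla w_\ast|^2,
\]
is correct. However, the way you close the contradiction has a genuine gap. You invoke the strong maximum principle on the open set $\{\phi>2\}$, and to make it bite you need to control $\phi$ on the boundary of that set — in particular as one approaches $FB(w_\ast)$ and, separately, as $|x|\to\infty$ in the blown-up domain. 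You claim that the viscosity free boundary condition ``passes to the limit by stability, ensuring $\phi\leq 2$ as one approaches $FB(w_\ast)$.'' That step is circular: the viscosity condition is a statement about smooth test functions touching $w_\ast$ at free boundary points; it does \emph{not} yield $\limsup_{x\to FB}|\nabla w_\ast(x)|^2\leq 2$. A pointwise asymptotic gradient bound near the free boundary is precisely what the lemma is trying to establish, so you cannot feed it in as boundary data. Moreover, even if the free boundary issue were resolved, you have not ruled out that the component of $\{\phi>2\}$ containing $0$ is unbounded, and the global Lipschitz bound only gives $\phi\leq C$, not $\phi\leq 2$, at infinity.

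The paper sidesteps both of these difficulties by arranging for $\phi$ to have an \emph{interior} maximum at the origin, where it then suffices to test the Bochner identity at a single point. Concretely, the paper sets $\psi(\varepsilon)=\sup_{B_\varepsilon(x_0)\cap\{v>0\}}|\nabla v|^2/F(v)$, picks $y_k$ nearly achieving $\psi(0^+)$, and rescales by $\rho_k=w(y_k)$ (not by the distance to the free boundary). Since $y_k\to x_0$ and $\rho_k\to 0$, the rescaled points $y_k+\rho_k x$ for $x\in B_1$ remain inside a ball around $x_0$ of shrinking radius $1/k+\rho_k$; the monotonicity of $\psi$ then forces $|\nabla w_0(x)|\leq|\nabla w_0(0)|$ on $B_1$ in the limit. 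With $w_k(0)=1$ the origin is deep inside $\{w_0>0\}$, and the optimality conditions $\nabla|\nabla w_0|^2(0)=0$, $\Delta|\nabla w_0|^2(0)\leq 0$ contradict your Bochner identity evaluated at $0$ (where the transport term drops out). If you modify your setup by (i) introducing the sup over shrinking balls and (ii) rescaling by $w(y_k)$ so as to normalize $w_k(0)=1$, the maximum principle on the unbounded limit set can be replaced by the purely local second-order conditions at $0$ and the gap disappears.
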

\begin{proof}
    Let $x\in \partial D$ 
    \begin{equation}\label{eq minimum}
       \psi(\e)= \sup_{\{u>0\}\cap B_\e(x)} \frac{|\nabla v|^2}{F(v)}.
    \end{equation}

The proof boils down to showing that $\psi(0^+)=\lim_{\e\to 0} \psi(\e) \leq 2$. So, we argue by contradiction assuming $\psi(0^+)>2$. We take, without loss of generality, $x=0$. Then, there exists a sequence $y_k \in B_{1/k}\cap \{v>0\}$ such that
\begin{equation}\label{eq contradiction sequence b}
    \frac{|\nabla v|^2(y_k)}{F(v)(y_k)} =M_k \to  \psi(0^+).
\end{equation}

The contradiction statement \eqref{eq contradiction sequence b} can be phrased in terms of $w$ as
\begin{equation}\label{eq contradiction sequence}
    |\nabla w|^2(y_k) =M_k \to  \psi(0^+).
\end{equation}

We consider the rescalings $w_k(x) = \frac{w(y_k +\rho_k x)}{\rho_k}$ with $\rho_k= w(y_k)$. Notice that since $w$ is Lipschitz continuous, Lip$(w_k)=$Lip$(w)$, implying that the functions $w_k$ are uniformly Lipschitz continuous. Also, since $w_k(0)=1$ we have that $w_k(x) \geq \frac{1}{2}$ for $x \in B_\frac{1}{C}$ for some constant only depending on the Lipschitz constant of $w$. Additionally, up to a subsequence, $w_k \to w_0$ uniformly in $B_1$ with $w_0$ Lipschitz in $B_1$ and satisfying Lip$(w_0)=$Lip$(w)$.  

Hence, we deduce from \eqref{eq Alt-Caffarelli trans}, \eqref{eq coefficient rescaling} and the uniform convergence of $w_k$ that
\begin{equation}\label{eq Alt-Caffarelli trans}
    \Delta w_k(x) = a_k(x)\Bigg(\frac{2-|\nabla w_k(x)|^2}{w_k}\Bigg), \mbox{in $\{u >0\}$}.
\end{equation}
where 
$$a_k(x) =  \frac{f(v(y_k+\rho_kx)) h(v(y_k+\rho_kx))}{2\sqrt{F}(v(y_k+\rho_kx))} \to \lambda$$ 
locally uniformly thanks to Lemma \ref{lemma cond conseq}. In particular, $\Delta w_k$ is a bounded function, which implies uniform $C^{1,\alpha}$ bounds for $w_k$ in $B_1$ and thus that $\nabla w_k \to \nabla w_0$ in $B_1$, in particular $|\nabla w_0| >2$ and, from the definition of $\psi$,
\begin{equation}\label{eq maximality gradient}
    |\nabla w_0(x)| \leq |\nabla w_0(0)| \quad \mbox{ for $x\in B_1$ }.
\end{equation}

On the other hand, we have that
\begin{equation}\label{eq limiting equation}
    \Delta w_0(x) = \lambda\frac{2-|\nabla w_0(x)|^2}{w_0} \quad \mbox{for $x\in B_1$},
\end{equation}
which, thanks to the Lipschitz regularity of $w_0$, yields smoothness for $w_0$. We proceed noticing that, in virtue of \eqref{eq maximality gradient}, we have that $\Delta |\nabla w_0|^2(0)\leq 0$. Our contradiction will follow from showing that the opposite inequality also holds. Combining Bochner's formula and \eqref{eq limiting equation} yields
\begin{eqnarray}\notag
      \Delta |\nabla w_0|^2 &=& 2 | D^2 w_0|^2 + 2 \nabla w_0\cdot \nabla \Delta w_0\\ \label{eq lower bound lap norm}
      &\geq& 2 \nabla w_0\cdot \nabla \Delta w_0.
\end{eqnarray}
We estimate now the right hand side of  \eqref{eq lower bound lap norm} at $0$. Multiplying on both sides of \eqref{eq limiting equation} by $w_0$, differentiating and using the optimality condition $\nabla |\nabla w_0|^2(0)=0$ together with $w_0(0)=1$, we deduce
\begin{equation}\label{eq gradient laplacian}
    \nabla w_0\cdot \nabla \Delta w_0(x) = -|\nabla w_0|^2(0) \Delta w_0(0)= \lambda (|\nabla w_0(x)|^2-2)|\nabla w_0(x)|^2.
\end{equation}

Finally, \eqref{eq lower bound lap norm} and \eqref{eq gradient laplacian} altogether imply $\Delta  |\nabla w_0|^2(0)>0$, which is a contradiction.
\end{proof}

We now show that if $v$ solves \eqref{eq semilinear}, then $w = h(v)$ is a solution to \eqref{eq one phase} that satisfies a non-degeneracy bound.

\begin{proposition}
Let $v$ be a solution to \eqref{eq semilinear} in $\Omega$ with $0 \in FB(v)$, and let $w = h(v)$. Then, it follows that for $\rho \leq \frac{1}{C}$
\begin{equation}\label{eq nondegeneracy}
    \sup_{x \in B_\rho} w(x) \geq \frac{1}{C}\rho. 
\end{equation}
\end{proposition}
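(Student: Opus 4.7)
I would argue by reducing the non-degeneracy to a subsolution property of $w^2$ near the free boundary combined with a density estimate for $\{w>0\}$ at $FB$ points. Differentiating the interior equation in \eqref{eq one phase} gives, on $\{w>0\}$,
\begin{equation*}
\Delta(w^2) = 2|\nabla w|^2 + 2w\Delta w = 4a(x) + 2(1-a(x))|\nabla w|^2.
\end{equation*}
By Lemma~\ref{lemma cond conseq}, $a(x)\to \lambda := \omega/(2-\omega)$ near $FB(w)$ (hence $a$ is locally bounded), and by Lemma~\ref{lemma optimal gradient bound}, $|\nabla w|^2 \leq 2+\eta$ near $FB(w)$ for any fixed $\eta>0$. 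Together these yield $\Delta(w^2)\geq c_* > 0$ on $\mathcal U\cap \{w>0\}$ for a universal $c_*$ and some neighborhood $\mathcal U$ of $FB(w)$. Moreover, $w^2\in C^1$ with $\nabla(w^2) = 2w\nabla w\to 0$ across $FB(w)$ (since $w\to 0$ and $|\nabla w|$ is bounded), so the inequality extends distributionally as $\Delta(w^2)\geq c_*\chi_{\{w>0\}}$ on $\mathcal U$.

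For $x_0\in FB(w)$ and $\rho$ small so that $B_\rho(x_0)\subset \mathcal U$, Green's representation combined with $w^2(x_0)=0$ yields
\begin{equation*}
\fint_{\partial B_\rho(x_0)} w^2\,d\sigma \;=\; \int_{B_\rho(x_0)} G_\rho(x_0,y)\,\Delta(w^2)(y)\,dy \;\geq\; c_* \int_{B_\rho(x_0)\cap \{w>0\}} G_\rho(x_0,y)\,dy.
\end{equation*}
Using the standard lower bound $G_\rho(x_0,y)\gtrsim \rho^{2-n}$ on $B_{\rho/2}(x_0)$ (with the usual $\log$ version in dimension two), the desired non-degeneracy $\sup_{B_\rho(x_0)} w\geq c\rho$ will follow once one establishes a density estimate $|B_{\rho/2}(x_0)\cap\{w>0\}|\geq c_0\rho^n$ for a universal $c_0>0$.

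To prove the density estimate, rescale $w_\rho(x) := \rho^{-1} w(\rho x + x_0)$, which solves a rescaled one-phase problem with coefficient $a_\rho(x) := a(\rho x + x_0)$ converging locally uniformly to $\lambda$ by Lemma~\ref{lemma cond conseq} and retains $0\in FB(w_\rho)$. Compare $w_\rho$ from below with a translated barrier $\Phi(x) := h(\underline{v}_1(x-z))$ from Lemma~\ref{eq radial barriers}, where the center $z$ is chosen so that the annular support $\{1<|x-z|<1+\kappa\}$ overlaps $B_{1/2}(0)$. A strong-maximum-principle argument in the common positivity region, using that both equations converge to the constant-coefficient limit $\Delta u = \lambda(2-|\nabla u|^2)/u$ and that the right-hand side $(2-|\nabla u|^2)/u$ is monotone decreasing in $u$ under the bound $|\nabla u|^2\leq 2+\eta$ from Lemma~\ref{lemma optimal gradient bound}, forces $w_\rho > 0$ on a set of universally positive Lebesgue measure in $B_{1/2}(0)$. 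Undoing the rescaling yields the density estimate and hence the non-degeneracy.

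The main obstacle is precisely the density step: without it the Green-function integral cannot be effectively bounded below. The key technical input is the scale-invariance of the constant-coefficient limit problem together with the locally uniform convergence of the coefficients from Lemma~\ref{lemma cond conseq}, which together make the radial barriers from Lemma~\ref{eq radial barriers} effective competitors for $w_\rho$ in the small-scale limit $\rho\to 0^+$ and let the comparison principle for the degenerate one-phase equation be applied stably near the free boundary.
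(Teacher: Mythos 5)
Your first step — differentiating $w^2$, using Lemma~\ref{lemma cond conseq} and Lemma~\ref{lemma optimal gradient bound} to get $\Delta(w^2)\geq c_*>0$ on $\{w>0\}$ near the free boundary, and extending distributionally as $\Delta(w^2)\geq c_*\chi_{\{w>0\}}$ — is correct and is exactly the opening move of the paper's proof. But the route you take from there, Green's representation plus a density estimate $|B_{\rho/2}(x_0)\cap\{w>0\}|\gtrsim\rho^n$, introduces a step that is both unnecessary and, in the logic of this paper, circular: Corollary~\ref{corollary uniform density} (uniform density of $\{w>0\}$ along $FB(w)$) is \emph{derived} from the non-degeneracy \eqref{eq nondegeneracy} together with the Lipschitz bound \eqref{eq linear growth}, not the other way around. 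Your rescaling-and-barrier sketch for proving density independently is only gestured at; the claim that a comparison with the translated annular barrier ``forces $w_\rho>0$ on a set of universally positive Lebesgue measure'' is not justified as written, and this is precisely where a careful argument would have to do real work.

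The gap is avoidable: the subsolution inequality for $w^2$ gives non-degeneracy directly by the maximum principle on the positivity set, with no Green's function and no density. Indeed, since $\Delta\bigl(w^2-\tfrac{c_*}{2n}|x-y|^2\bigr)\geq 0$ on $B_\rho\cap\{w>0\}$, pick any $y\in\{w>0\}$ with $|y|<\rho/2$ (such $y$ exist arbitrarily close to $0\in FB(w)$). If $\sup_{B_\rho}w^2<\tfrac{c_*}{8n}\rho^2$, then on $\partial B_{\rho/2}(y)$ we have $w^2-\tfrac{c_*}{2n}|x-y|^2\leq \sup_{B_\rho}w^2-\tfrac{c_*}{2n}(\rho/2)^2<0$, while on $FB(w)\cap B_{\rho/2}(y)$ we have $w^2-\tfrac{c_*}{2n}|x-y|^2=-\tfrac{c_*}{2n}|x-y|^2\leq 0$. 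So the comparison function is $\leq 0$ on all of $\partial\bigl(B_{\rho/2}(y)\cap\{w>0\}\bigr)$, and the maximum principle forces $w^2(y)\leq 0$, contradicting $w(y)>0$. Hence $\sup_{B_\rho}w\geq\sqrt{c_*/(8n)}\,\rho$. Observe that the argument needs no information about how big $\{w>0\}$ is: the zero boundary value of $w^2$ on $FB(w)$ handles that part of the boundary for free. This is the key structural point your proposal missed, and it is what makes the paper's argument short.
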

\begin{proof}
For $\delta > 0$ to be chosen later, take $\rho$ small enough so that $|\nabla w|^2 \leq 2 + \delta$ on $B_{\rho}$. Then, we compute that

\begin{equation*}
\Delta w^2 = 2|\nabla w|^2 + a(x)[2 - |\nabla w|^2] = (2 - a(x))|\nabla w|^2 + 2a(x).
\end{equation*}
The above quantity is bounded above by a positive constant if $a(x) \leq 2$. If $a(x) > 2$, then

\begin{equation*}
\Delta w^2 \geq (2 - a(x))(2 + \delta) + 2a(x) = 4 + 2\delta - \delta a(x).
\end{equation*}

The above quantity will be positive by choosing $\delta \leq \frac{4}{\sup_{x \in \Omega}a(x)}$. Hence, $\Delta w^2 > \frac{1}{C}$. 

Consequently, it follows that $w_0 \equiv w^2 - \frac{1}{C}|x|^2$ is a subharmonic function such that $w_0(0) = 0$. Hence, there exists some $x_0 \in \partial B_{\rho} \cap \{w > 0\}$ such that $w_0(x_0) \geq 0$. Therefore, $w^2(x_0) \geq \frac{1}{C}\rho^2$, and the proof is complete. 
\end{proof}

Notice that, as a corollary of Lemma \ref{lemma gradient estimate}, solutions have a linear growth away from the free boundary; that is,

\begin{equation}\label{eq linear growth}
w(x) \leq Cdist(x, FB(w)).
\end{equation}

Combining \eqref{eq linear growth} with \eqref{eq non-degeneracy}, we obtain the uniform density of the positivity set along the free boundary.
\begin{corollary}\label{corollary uniform density}
If $0 \in FB(w)$, then given $r > 0$, there exists $x_0$ such that $B_{\frac{r}{c}}(x_0) \subset B_r \cap \{w > 0\}$.
\end{corollary}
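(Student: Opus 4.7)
The plan is the standard combination of non-degeneracy and Lipschitz regularity. Applying the non-degeneracy estimate \eqref{eq nondegeneracy} on the ball $B_{r/2}$ (assuming $r/2$ is small enough so that the estimate applies), I would first select a point $y_0 \in \overline{B_{r/2}}$ with
\begin{equation*}
w(y_0) \geq \frac{1}{C}\cdot \frac{r}{2} = \frac{r}{2C}.
\end{equation*}

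Next, since $w = h(v)$ is uniformly Lipschitz in $\Omega$ by Lemma \ref{lemma gradient estimate}, with some Lipschitz constant $L$, I would deduce that for every $x \in B_{\rho}(y_0)$ with $\rho := \frac{r}{4CL}$,
\begin{equation*}
w(x) \geq w(y_0) - L|x - y_0| \geq \frac{r}{2C} - L\rho = \frac{r}{4C} > 0.
\end{equation*}
Thus $B_{\rho}(y_0) \subset \{w > 0\}$.

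Finally, to ensure $B_{\rho}(y_0) \subset B_r$, note that $y_0 \in \overline{B_{r/2}}$ and that by choosing the constant large enough we may guarantee $\rho \leq r/2$; then for $x \in B_{\rho}(y_0)$ we have $|x| \leq |y_0| + \rho \leq r/2 + r/2 = r$. Taking $x_0 = y_0$ and redefining the constant $c$ appropriately (so that $r/c = \rho$), we conclude $B_{r/c}(x_0) \subset B_r \cap \{w > 0\}$, as desired. The main (trivial) obstacle is merely bookkeeping the constants so that the ball produced by Lipschitz continuity is small relative to $r$; there is no substantial difficulty since both \eqref{eq nondegeneracy} and the Lipschitz bound are linear in $r$.
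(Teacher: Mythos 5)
Your proof is correct and follows essentially the same approach the paper sketches: apply the non-degeneracy estimate \eqref{eq nondegeneracy} to find a point $y_0$ with $w(y_0)\gtrsim r$, then use the uniform Lipschitz bound to propagate positivity to a ball $B_{r/c}(y_0)$. The paper states it as a combination of non-degeneracy with the linear growth bound \eqref{eq linear growth}, but since \eqref{eq linear growth} is itself just a restatement of Lipschitz continuity via the nearest free boundary point, the two routes are the same argument.
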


The next lemma precises the asymptotic convergence of the previous gradient bound as we approach the free boundary. 

\begin{lemma}\label{lemma supergradient bound 3000}
For $s > 0$, let $l_s(t) = \int_0^t F(l)^{s}dl$. Then, there exists $\delta > 0$ and an $s \in (0, \frac{1}{2})$ such that for any $x_0 \in FB(v)$,
    \begin{equation}\label{eq improved gradient bound}
        |\nabla v|^2 \leq 2F(v)+ v^{1-2s}l_s(v) \quad \mbox{ in $B_{\delta}(x_0)$}.
    \end{equation}
\end{lemma}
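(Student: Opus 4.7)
I plan to prove the lemma by a P-function / Bernstein-type maximum principle argument applied to
\[
P(x) := |\nabla v(x)|^2 - 2F(v(x)) - Q(v(x)), \qquad Q(v) := v^{1-2s} l_s(v),
\]
with $s \in (0, 1/2)$ to be chosen. The goal is to show $P \leq 0$ in $B_\delta(x_0) \cap \{v > 0\}$ by ruling out positive interior maxima. Bochner's identity together with $\Delta v = f(v)$ gives
\[
\Delta |\nabla v|^2 = 2|D^2 v|^2 + 2 f'(v)|\nabla v|^2, \qquad \Delta(2F(v)) = 2 f(v)^2 + 2 f'(v)|\nabla v|^2,
\]
so that the $f'(v)|\nabla v|^2$ terms cancel exactly, leaving
\[
\Delta P = 2|D^2 v|^2 - 2 f(v)^2 - Q'(v) f(v) - Q''(v)|\nabla v|^2.
\]

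Next I would analyze a hypothetical positive interior maximum $x_0 \in \{v > 0\}$. The optimality condition $\nabla P(x_0) = 0$ forces $\nabla v$ to be an eigenvector of $D^2 v$ with eigenvalue $\mu := f(v) + \tfrac{1}{2} Q'(v)$; since $\tr D^2 v = f(v)$, the remaining $n-1$ eigenvalues sum to $-\tfrac{1}{2}Q'(v)$ and Cauchy--Schwarz yields
\[
|D^2 v|^2 \geq \mu^2 + \frac{1}{4(n-1)} Q'(v)^2 = f(v)^2 + f(v) Q'(v) + \frac{n}{4(n-1)} Q'(v)^2.
\]
Combining this with the expression for $\Delta P$ and bounding $|\nabla v|^2 \leq (2 + \eta) F(v)$ via Lemma \ref{lemma optimal gradient bound} (for $\delta$ sufficiently small that $\eta$ is as small as needed) produces
\[
\Delta P(x_0) \geq f(v) Q'(v) + \frac{n}{2(n-1)} Q'(v)^2 - (2 + \eta) F(v) Q''(v).
\]
Using $(A_2)$, which ensures $F(v) \sim v^\omega$ with $\omega \in [1,2)$ near zero, a direct computation shows $Q'(v) \sim c_1 v^{1 - 2s + s\omega}$ and $Q''(v) \sim c_2 v^{s(\omega - 2)}$, hence
\[
\lim_{v \to 0^+} \frac{f(v) Q'(v)}{F(v) Q''(v)} = \frac{\omega \bigl(s\omega + 2(1 - s)\bigr)}{2 - 6s + 3s\omega + s^2(2 - \omega)^2}.
\]
For $s$ small this ratio is close to $\omega \in [1, 2)$; choosing $\eta$ and $s$ compatibly, the lower bound for $\Delta P(x_0)$ becomes strictly positive, contradicting $\Delta P(x_0) \leq 0$.

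Finally, for the maximum principle one must verify $P \leq 0$ on the boundary of $B_\delta(x_0) \cap \{v > 0\}$. On the free-boundary portion, the three terms in $P$ vanish by the Lipschitz regularity of $v$ and the fact that $F(0) = Q(0) = 0$. On the spherical portion $\partial B_\delta(x_0) \cap \{v > 0\}$, I would compare $v$ to the explicit radial barrier $\underline{v}_r$ from Lemma \ref{eq radial barriers}, for which the one-dimensional identity $(\underline{v}_r')^2 = 2F(\underline{v}_r) + \text{(computable curvature correction)}$ gives the target bound directly. The main obstacle is the simultaneous calibration of $s$, $\eta$, and $\delta$ required to close the loop: the interior contradiction needs the ratio above to strictly exceed $2 + \eta$, which in turn forces $\delta$ small enough that Lemma \ref{lemma optimal gradient bound} delivers the required $\eta$. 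A secondary technical point is that $|D^2 v|^2$ can only be estimated via the eigenvalue constraint at $x_0$, so the distribution of the remaining $n-1$ eigenvalues must be handled by Cauchy--Schwarz exactly as above to retain the quadratic-in-$Q'$ correction that makes the contradiction work.
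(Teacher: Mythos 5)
Your overall strategy — a $P$-function $P=|\nabla v|^2-2F(v)-Q(v)$ with $Q=v^{1-2s}l_s$, exploiting the exact cancellation of the $f'(v)|\nabla v|^2$ terms and then ruling out an interior positive maximum — is the right idea and is exactly the spirit of the paper's proof. Two parts of your argument, however, do not close.

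First, the boundary calibration. You apply the maximum principle directly on $B_\delta(x_0)\cap\{v>0\}$, whose boundary has two pieces: $FB(v)\cap B_\delta$, where indeed $P\to 0$, and the spherical cap $\partial B_\delta\cap\{v>0\}$, where nothing forces $P\le 0$. Your suggestion to control the cap by comparing $v$ to the radial barrier $\underline{v}_r$ of Lemma~\ref{eq radial barriers} does not yield the pointwise inequality $P\leq 0$ there; the radial identity $(\underline{v}_r')^2=2F(\underline{v}_r)+\text{curvature term}$ concerns the barrier, not $v$, and a sub/supersolution comparison only orders the functions, not their gradients. The paper avoids the cap entirely by working in a ball $B_{3\tau}(x)\subset\subset\{v>0\}\cap B_\delta(x_0)$ and augmenting the test function to
$\bar w(y)=|\nabla v|^2-2F(v)-v^{1-2s}l_s(v)-\zeta(|y-x|)\,l_1(v)$,
with $\zeta(t)=\kappa(t-\tau)_+^3$. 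Choosing $\kappa$ large makes $\bar w\le 0$ on $\partial B_{3\tau}(x)$, and since $\zeta\equiv 0$ on $B_\tau(x)$, ruling out an interior positive maximum gives the estimate at $x$. Without some such localization device, the maximum principle step is not justified.

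Second, and more seriously, the claimed sign of $\Delta P$ at the putative maximum. You reduce to showing
$fQ'+\tfrac{n}{2(n-1)}Q'^2-(2+\eta)FQ''>0$,
and you note correctly that the key ratio is $\lim_{v\to 0^+} \frac{fQ'}{FQ''}=\frac{\omega}{1-s(2-\omega)}$. But for every $s\in(0,\tfrac12)$ and $\omega\in[1,2)$ this limit is \emph{strictly less than} $2$ (it tends to $2$ from below only as $s\to\tfrac12^-$). Meanwhile $\eta>0$, so $2+\eta>2$. The extra $\frac{n}{2(n-1)}Q'^2$ term is of order $v^{2\beta-2}$ while $FQ''$ is of order $v^{\omega+\beta-2}$, with $\beta=2-2s+s\omega$, and $\beta-\omega=(2-\omega)(1-s)>0$, so $Q'^2/(FQ'')\to 0$ and cannot rescue the sign. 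Consequently your lower bound for $\Delta P$ at the maximum is \emph{negative} to leading order, and no choice of ``$\eta$ and $s$ compatibly'' repairs this; you have flagged this as ``the main obstacle'' but not resolved it. The paper does not attempt to make this single ratio exceed $2+\eta$. Instead it computes $v_{nn}$ from the first-order optimality condition, plugs $2v_{nn}^2\le 2|D^2v|^2$ into the Bochner identity, keeps the full term-by-term expansion (including the $l_s$-terms you discard), and splits the resulting lower bound for $\Delta\bar w$ into three groups $A$, $B$, $C$. Hypothesis $(A_3)$ (i.e.\ $f(t)\le t^\epsilon$) is then used crucially to show $B>0$ — this is the ingredient your argument omits entirely. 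In short: the eigenvector/Cauchy--Schwarz refinement of $|D^2v|^2$ and the asymptotics under $(A_2)$ are correct, but they do not produce the needed positivity; you need $(A_3)$ and a finer term-by-term balance, and you need a localization mechanism for the maximum principle.
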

\begin{proof}
Choose $x \in \{v > 0\} \cap B_{\delta}(x_0)$ and let $\tau$ be such that $B_{3\tau}(x) \subset \{v > 0\} \cap B_{\delta}(x_0)$. Then, we define $\zeta(t) = \kappa(t - \tau)_{+}^{3}$ for $\kappa > 0$ to be chosen later.
With this in mind, we let
\begin{equation}\label{eq w bar}
\bar{w}(y) = |\nabla v(y)|^2 - 2F(v(y)) - v^{1-2s}l_{s}(v(y)) - \zeta(|y-x|)l_1(v(y)),
\end{equation}
where $s$ is to be chosen later. We choose $\kappa$ large enough in \eqref{eq w bar} so that $\bar{w} \leq 0$ on $\partial B_{3\tau}(x)$. By way of contradiction, we will now assume that $\bar{w}$ attains a positive maximum at some $y_0 \in B_{3\tau}(x)$ and show that $\Delta \bar{w} > 0$ at such point, yielding a contradiction. If this is the case, since $\zeta \equiv 0$ on $B_{\tau}(x)$, we will have that \eqref{eq improved gradient bound} holds at $x$, thus proving the result. \\

Without loss of generality, we take $y_0 = 0$ for the rest of the proof. By a rotation of coordinates, we may assume that $\nabla v(0)$ is collinear with $e_n$ at such point. Then, using the fact that $\bar{w}$ obtains a positive maximum at $0$, we take a derivative in the direction $e_n$ in \eqref{eq w bar} at the origin to get that 

\begin{eqnarray}\nonumber
v_{nn}(0) &=&  f(v(0)) + \frac{1}{2}[[(1-2s)v(0)^{-2s}l_s(v(0)) + F^{s}(v(0))v(0)^{1-2s}] ] \\ \label{eq xn derivative}
&&+ \frac{3\kappa(|x| - \tau)_{+}^{2}x_nl_1(v(0))}{2|x|v_n(0)} + \frac{\kappa(|x| - \tau)_{+}^{3}F(v(0))}{2}.
\end{eqnarray}
On the other hand, Bochner's formula yields 
\begin{equation}\label{eq bochner}
\Delta |\nabla v|^2 = 2f'(v)|\nabla v|^2 + 2|D^2v|^2,
\end{equation}
while a direct computation yields
\begin{equation}\label{eq F}
\Delta 2F(v) = 2f'(v)|\nabla v|^2 + 2f^2(v),
\end{equation}
\begin{eqnarray} \nonumber
\Delta [v^{1-2s}l_s(v)] &=& (1-2s)v^{-2s}l_s(v)f(v) + v^{1-2s}f(v)F^{s}(v) \\ \nonumber
&&- (1-2s)(2s)v^{-1 - 2s}l_s(v)|\nabla v|^2 + (1-2s)v^{-2s}F^{s}(v)|\nabla v|^2 \\ \label{eq 3} 
&&+ sF^{s-1}f(v)v^{1-2s}|\nabla v|^2 + (1-2s)F^{s}(v)v^{-2s}|\nabla v|^2,   
\end{eqnarray}
and
\begin{eqnarray}\nonumber 
\Delta \zeta l_1(v) &=&  \frac{3(n-1)(|x| - \tau)_{+}^2l_1}{|x|} + 6\kappa(|x|-\tau)_{+}l_1 + \frac{6\kappa(|x|-\tau)_{+}^2(x)\cdot\nabla v F(v)}{|x|} \\ \label{eq 4}
&& + \kappa (|x| - \tau)_{+}^3f(v)|\nabla v|^2 + \kappa(|x| - \tau)_{+}^{3}F(v)f(v).
\end{eqnarray}

Then, using the fact that $2|D^2v|^2 \geq 2v_{nn}^{2}$ and invoking \eqref{eq xn derivative}, \eqref{eq bochner}, \eqref{eq F}, \eqref{eq 3}, and \eqref{eq 4}, we get that

\begin{equation*}
\Delta \bar{w}(0) \geq A(0) + B(0) + C(0),
\end{equation*}

where 

\begin{eqnarray*}
A &=& 2v^{1-2s}f(v)F^{s}(v) - sv^{1-2s}f(v)F(v)^{s-1}|\nabla v|^2, \\
B &=& \frac{1}{2}v^{2-4s}F^{2s} - 2(1-2s)v^{-2s}F^{s}|\nabla v|^2, \\
C &=& \frac{\kappa^2}{2}(|x| - \tau)_{+}^{3}F(v) - \kappa (|x| - \tau)_{+}^3f(v)|\nabla v|^2 - \kappa(|x| - \tau)_{+}^{3}F(v)f(v)\\
&&- 6\kappa(|x| - \tau)_{+}^2|\nabla v|F(v) -  \frac{3(n-1)(|x| - \tau)_+^2l_1(v)}{|x|} - \frac{6\kappa(|x|- \tau)^2F(v)x_n|\nabla v|}{|x|} \\
&& -6\kappa(|x|-\tau)_{+}l_1(v).
\end{eqnarray*}
By taking $s$ sufficiently close to $\frac{1}{2}$ and $x$ sufficiently close to the free boundary, and by invoking $(A_3)$, we ensure $A$ and $B$ are positive. Indeed, for any $s \in (0,\frac{1}{2})$,  we can take $\delta > 0$ such that, by Lemma \ref{lemma optimal gradient bound}, it follows that
\begin{equation*}
A(0) \geq v(0)^{1 - 2s}f(v(0))F(v(0))[1 - s(2 + \eta(\delta))] > 0.
\end{equation*}
Analogously, by using again Lemma \ref{lemma optimal gradient bound} and hypothesis $(A_3)$, we can find $s$ sufficiently close to $\frac{1}{2}$ (depending on $\epsilon$) and $\delta > 0$ small enough such that 
\begin{eqnarray*}
B(0) &\geq& \frac{1}{4}v(0)^{2-4s}F^{2s} - 2(1-2s)(2 + \eta(\delta))v(0)^{-2s}F(v(0))^{s+1} \\
&&\geq F(v(0))^{2s}\big(\frac{1}{4}v(0)^{2-4s} - \frac{2}{(1 + \epsilon)^{1-s}}(1 - 2s)(2 + \eta(\delta))(v(0)^{(\epsilon + 1)(1-s)})\big) > 0.
\end{eqnarray*}
Further, by choosing $\kappa$ large enough, we ensure $C(0)$ is positive as well. This yields a contradiction.
\end{proof}

Notice that since $F(t)$ is an increasing function, then $l_s(v) \leq vF^{s}(v)$. Hence, the previous estimate implies that there is a neighborhood of the free boundary in which

\begin{equation}\label{improved gradient bound}
|\nabla w|^2 - 2 \leq \frac{v^{1-2s}l_s(v)}{F(v)} \leq \big(\frac{v}{\sqrt{F(v)}}\big)^{2-2s}\leq w^{2-2s},
\end{equation}
where we have used that $w \geq \frac{v}{\sqrt{F(v)}}$, thanks to the monotonicity of $F$. \\

We now use \eqref{improved gradient bound} to show an integrability estimate near the free boundary. The proof and the result are the same as in Lemma 4.5 of \cite{de2021certain}, but we include them for completeness. Without loss of generality, we assume now that the origin is a free boundary point and that the equation holds in $B_1$.

\begin{lemma}\label{lemma integrability w-1}
Under the assumptions of the previous theorem, we have that
\begin{equation}
\int_{B_1 \cap \{w > 0\}} \frac{|\nabla w|^2 - 2}{w} \leq C.
\end{equation}
\end{lemma}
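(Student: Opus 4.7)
The plan is to reduce the integral bound to a pointwise upper bound on $\big((|\nabla w|^2 - 2)/w\big)_{+}$ over the finite-measure set $B_1 \cap \{w > 0\}$. Once such a bound is in hand,
\begin{equation*}
\int_{B_1 \cap \{w > 0\}} \frac{|\nabla w|^2 - 2}{w}\,dx \;\leq\; \int_{B_1 \cap \{w > 0\}} \left(\frac{|\nabla w|^2 - 2}{w}\right)_{+} dx \;\leq\; C
\end{equation*}
follows at once, and the estimate splits naturally into two regions according to proximity to the free boundary.

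On a uniform tubular neighborhood $U_{\delta_0} := \{x : \text{dist}(x, FB(w)) < \delta_0\}$, Lemma \ref{lemma supergradient bound 3000} combined with \eqref{improved gradient bound} supplies the bound $|\nabla w|^2 - 2 \leq w^{2-2s}$ with $s \in (0, \tfrac12)$; dividing by $w$ gives $\big((|\nabla w|^2 - 2)/w\big)_{+} \leq w^{1-2s}$, and the right-hand side is uniformly bounded since $w$ is globally Lipschitz and nonnegative. Off this neighborhood, at points $x \in B_1 \cap \{w > 0\}$ with $\text{dist}(x, FB(w)) \geq \delta_0$, a short compactness argument forces $w \geq c_{\delta_0} > 0$: any sequence $x_k$ in this region with $w(x_k) \to 0$ would, by compactness of $\overline{B_1}$, subconverge to some $x_\infty$ with $w(x_\infty) = 0$ and $\text{dist}(x_\infty, FB(w)) \geq \delta_0$, but then $x_\infty \in \overline{\{w>0\}} \cap \{w = 0\} \subset \partial\{w>0\} = FB(w)$, a contradiction. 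Combined with the global Lipschitz bound on $w$ from Lemma \ref{lemma gradient estimate}, this region also admits a uniform pointwise bound on $\big((|\nabla w|^2 - 2)/w\big)_{+}$.

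The main delicate point will be confirming that the localization radius $\delta_0$ in Lemma \ref{lemma supergradient bound 3000} can be taken independently of the base free-boundary point, since that lemma is phrased pointwise around a single $x_0 \in FB(v)$. An inspection of its proof, however, shows that the relevant constants depend only on universal data (the behavior of $f$, $F$, $h$ near zero, together with the modulus supplied by Lemma \ref{lemma optimal gradient bound}), so the uniformity indeed holds after possibly further shrinking $\delta_0$. With both pointwise bounds in place, the conclusion is immediate from the opening reduction. If instead one prefers to mimic the strategy of Lemma 4.5 in \cite{de2021certain} directly, one can multiply the PDE for $w$ by a cutoff $\phi \in C_c^\infty(B_2)$ with $\phi \equiv 1$ on $B_1$, integrate over $\{w > \epsilon\}$, integrate by parts, and choose $\epsilon_k \downarrow 0$ along which $\mathcal{H}^{n-1}(\{w = \epsilon_k\} \cap B_2)$ remains bounded (via the coarea formula applied to $\int |\nabla w|^2 \leq C$); the pointwise bound from Lemma \ref{lemma supergradient bound 3000} then permits passage to the limit.
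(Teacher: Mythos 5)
Your proof is correct, and it takes a genuinely different — and more elementary — route than the paper. The paper follows the test-function strategy of Lemma 4.5 in \cite{de2021certain}: one builds a $C^{1,1}$ smoothing $m(t)$ of $t^{1+\xi}$ with $m''(t)=\min\{\tfrac1\theta,t^{\xi-1}\}$, computes $\Delta m(w)$ using \eqref{eq one phase}, uses the divergence theorem to bound $\int_{\{w>0\}\cap B_1}\Delta m(w)$, absorbs the nonlinear error via Lemmas \ref{lemma cond conseq} and \ref{lemma optimal gradient bound}, and passes $\theta\to 0$ by monotone convergence to conclude $\int w^{\xi-1}\leq C$; combined with $\tfrac{|\nabla w|^2-2}{w}\leq w^{\xi-1}$ from \eqref{improved gradient bound}, this gives the claim. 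You instead observe that since $\xi-1 = 1-2s>0$ and $w$ is globally bounded, the pointwise bound $\big(\tfrac{|\nabla w|^2-2}{w}\big)_+\leq w^{1-2s}$ already makes the positive part of the integrand \emph{uniformly bounded} near the free boundary, while away from it a compactness argument pins $w\geq c_{\delta_0}>0$ and the Lipschitz bound from Lemma \ref{lemma gradient estimate} finishes the job; integration over the finite-measure set $B_1\cap\{w>0\}$ is then trivial. Both arguments ultimately feed on the same engine, Lemma \ref{lemma supergradient bound 3000} and the consequence \eqref{improved gradient bound}, and both implicitly rely on the radius $\delta$ in that lemma being uniform over $FB(v)\cap\overline{B_1}$; your explicit compactness justification of this uniformity (via Lemma \ref{lemma optimal gradient bound}) is a fair point to flag, and it resolves cleanly exactly as you say. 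The trade-off: the paper's integration-by-parts route is more robust (it would survive even if the exponent $\xi-1$ were allowed to be nonpositive, making $w^{\xi-1}$ unbounded near the free boundary but still integrable, which is closer to the general setting of \cite{de2021certain}), whereas your route exploits that in the present problem $\xi-1>0$ renders the estimate trivial, yielding a shorter and more transparent proof of the same statement.
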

\begin{proof}
Letting $s$ be as in the previous result, for $\theta > 0$ and $\xi = 2 - 2s > 1$ we let $m(t)$ be a $C^{1,1}$ smoothing of $(t^{1 + \xi})_+$; that is, $m$ satisfies $m(0) = m'(0) = 0$ and $m''(t) = \min\big\{\frac{1}{\theta}, t^{\xi - 1}\big\}$. Then, using \eqref{eq one phase}, we compute that
\begin{equation*}
\Delta m(w) = m''(w)\Big(\frac{m'(w)}{m''(w)w}a(x)[|\nabla w|^2 - 2] + |\nabla w|^2\Big).
\end{equation*}
We can take $a(x)(|\nabla w |^2 - 2)$ sufficiently small by Lemmas \ref{lemma cond conseq} and \ref{lemma optimal gradient bound} so that
\begin{equation*}
\Delta m(w) \geq (1 - \frac{1}{C}m''(w))|\nabla w|^2.
\end{equation*}
Now, the divergence theorem along with the fact that $w$ is Lipschitz gives that
\begin{equation*}
\int_{\{w > 0\} \cap B_1} \Delta m(w) \leq C.
\end{equation*}

Hence, we can conclude that 

\begin{equation*}
\int_{\{w > 0\}} m''(w) \leq C.
\end{equation*}

But, by Lemma \ref{lemma supergradient bound 3000}
\begin{equation*}
\frac{|\nabla w|^2 - 2}{w} \leq w^{\xi - 1} = \lim_{\theta \rightarrow 0} m''(w). 
\end{equation*}
The result then follows by letting $\theta \rightarrow 0$.
\end{proof}
We are now in a position to prove Theorem \ref{thm conditional}. The proof also follows the same line as in \cite{de2021certain}.

\begin{proof}
For $\theta > 0$ and $\xi = 2 - 2s$, let $m(t)$ be as in Lemma \ref{lemma integrability w-1}, but with support in $[\theta, 4\theta]$. Then, Lemma \ref{lemma integrability w-1} implies that 

\begin{equation}
C \geq \int_{B_1} \Delta m(w) \geq \int_{B_1} m''(w)|\nabla w|^2 + m'(w)a(x)(\frac{2 - |\nabla w|^2}{w}) \geq \int_{B_1 \cap \{w \in (\theta, 2\theta)\}} \frac{1}{\theta}|\nabla w|^2 - C.    
\end{equation}

Thus, we conclude that
\begin{equation}
|\theta < w < 2\theta \cap B_1| \leq C\epsilon. 
\end{equation}

Now, take a covering of the free boundary in $B_1$ by $N$ balls of radius $C\theta$ centered at some $z_k$. Then, it follows that

\begin{equation}
\sum_{k=1} ^ {N} |B_{C\theta}(z_k) \cap w > 0| \leq C\theta.
\end{equation}

However, due to Corollary \ref{corollary uniform density},
\begin{equation}
|B_{C\theta}(z_k) \cap w > 0| \geq C\theta^{n}.
\end{equation}

We thus get that 
\begin{equation}
NC\theta^{n-1} \leq C
\end{equation}
and the result follows.
\end{proof}

Finally, we show that if the right-hand side in \eqref{eq reversed problem} satisfies $(A2)$, then it also satisfies the optimal supersolution bound.

\begin{proof}[Proof of Corollary \ref{corollary grad cond}]
Assuming that $v$ is now the solution to \eqref{eq reversed problem}, we as usual let $w = h(v)$, with $0 \in FB(v)$. We have that for any $\theta, r > 0$
\begin{equation*}
\frac{1}{\theta}\int_{\{0 < u \leq \phi^{-1}(\theta)\} \cap B_{2r}} |\nabla \phi(v(x))|^2 dx = \frac{1}{\theta}\int_{B_{2r}} \nabla \phi(v(x)) \cdot \nabla \{\min(\phi(v(x)), \theta)\} dx. 
\end{equation*}
By a standard penalization argument, we are able to integrate the right-hand side by parts. This then implies that
\begin{eqnarray*}
I &:=& \frac{1}{\theta} \int_{\{0 < v \leq \phi^{-1}(\theta)\} \cap B_{2r}} |\nabla \phi(v)|^2 + \phi(v)\Delta \phi(v)\\ 
&=& -\int_{\{\phi^{-1}(\theta) < v\} \cap B_{2r}} \Delta \phi(v) + \int_{\partial B_{2r}} \frac{\min\{(\phi(v), \theta)\}}{\theta} (\phi(v)_{\nu}) dH_{n-1}. 
\end{eqnarray*}

Now, we compute that 

\begin{eqnarray*}
|\nabla \phi(v)|^{2} = \frac{|\nabla v|^2}{F(v)} \\
\Delta \phi(v) = -\frac{f(v)}{2F(v)^{\frac{3}{2}}}|\nabla v|^2 + \frac{f(v)}{F(v)^{\frac{1}{2}}}.
\end{eqnarray*}

And so
\begin{eqnarray}\label{eq combined}
|\nabla \phi(v)|^2 + \phi(v)\Delta \phi(v) &=& \frac{|\nabla v|^2}{F(u)} - \frac{\phi(v)f(v)|\nabla v|^2}{2F(v)^{\frac{3}{2}}} + \frac{\phi(v)f(v)}{F(v)^{\frac{1}{2}}} \nonumber \\ 
&=& \frac{|\nabla v|^2}{F(v)}\Big[1 - \frac{\phi(v)f(v)}{2F(v)^{\frac{1}{2}}}\Big] + \frac{\phi(v)f(v)}{F(v)^{\frac{1}{2}}}.
\end{eqnarray}
Then, let us seek an upper bound for the term $\int_{\{0 < v \leq \phi^{-1}(\theta)\}} \frac{|\nabla v|^2\phi(v)f(v)}{F(v)^{\frac{3}{2}}}$. Applying the coarea formula and denoting $t = \phi^{-1}(\theta)$, we find that
\begin{eqnarray*}
\int_{\{0 < v \leq t\}} \frac{|\nabla v|^2\phi(v)f(v)}{F(v)^{\frac{3}{2}}} \leq C\int_{0} ^ {t} \int_{v = s} \frac{|\nabla v|\phi(s)f(s)}{F(s)^{\frac{3}{2}}} dH_{n-1}ds = C\int_{0} ^ {t} \frac{\phi(s)f^{3}(s)}{F(s)^{\frac{3}{2}}},
\end{eqnarray*}
where we have used that $\int_{v = s} |\nabla v| \leq Cf(s)^{2}$ (see the proof of Lemma \ref{lemma nondegeneracy}, specifically \eqref{eq testinlevelsets}). Now, since by assumption there exists some $\delta > 0$ such that $F(s) \geq \delta sf(s)$, then it follows that
\begin{equation}\label{meow}
\int_{0} ^ {t} \frac{\phi(s)f^{3}(s)}{F(s)^{\frac{3}{2}}} \leq \frac{1}{\delta}\int_{0} ^ {t} \frac{\phi(s)f^{\frac{3}{2}}(s)}{s^{\frac{3}{2}}}.
\end{equation}
Now, we notice that due to the optimal subsolution lower bound on $f$ from Lemma \ref{lemma nondegeneracy}, 
\begin{equation*}
F(s) = \int_{0} ^ {s} f(l) dl \geq C\int_{0} ^ {s} l^{\frac{1}{3}} dl = Cs^{\frac{4}{3}},
\end{equation*}
which yields
\begin{equation}\label{eq phi}
\phi(s) = \int_{0} ^ {s} F(l)^{-\frac{1}{2}} \leq C\int_{0} ^ {s} l^{-\frac{2}{3}} = Cs^{\frac{1}{3}}.
\end{equation}

Now, assume that $f(t) \leq Ct^{\gamma - 1}$ for some $\gamma < \frac{1}{3}$ (this is true for $\gamma = \frac{1}{5}$, as was shown in Theorem \ref{theorem suboptimal supersol}). We plug this in along with \eqref{eq phi} in \eqref{meow} to find that 
\begin{equation*}
C\int_{\{0 < v \leq \phi^{-1}(\theta)\}} \frac{|\nabla v|^2\phi(v)f(v)}{F(v)^{\frac{3}{2}}} \leq C\int_{0} ^ {t} \frac{\phi(s)f^{\frac{3}{2}}(s)}{s^{\frac{3}{2}}} \leq C\int_{0} ^ {t} s^{\frac{3\gamma}{2} - \frac{5}{3}} \leq Ct^{\frac{3\gamma}{2} - \frac{2}{3}}.
\end{equation*}
Now, we have that $\Delta \phi(v) \leq C$ by assumption $(A_2)$. Also, the boundary integral term is bounded by a constant and $\frac{\phi(s)f(s)}{F^{\frac{1}{2}}(s)} \geq 1$. As a result, we sum \eqref{eq combined} over a finite collection of balls that cover the free boundary to get that 
\begin{equation*}
\frac{|0 \leq v \leq \phi^{-1}(\theta)|}{\theta} \leq C + C\frac{\phi^{-1}(\theta)^{\frac{3\gamma}{2} - \frac{2}{3}}}{\theta},
\end{equation*}

which implies that

\begin{equation*}
|0 \leq v < \theta| \leq C\phi(\theta) + C\theta^{\frac{3\gamma}{2} - \frac{2}{3}} \leq C\theta^{\frac{1}{3}} + C\theta^{\frac{3\gamma}{2} - \frac{2}{3}}.   
\end{equation*}

Now, notice that $\frac{3\gamma}{2} - \frac{2}{3} \geq \gamma - 1$ for all positive $\gamma$. By iterating the argument $k$ times, it follows that $f(t) \leq Ct^{\gamma_{k}}$, where $\gamma_{k} = \min\{\frac{3\gamma_{k - 1}}{2} - \frac{2}{3}, \frac{1}{3}\}$. Notice that the sequence defined recursively by $a_n = \frac{3a_{n-1}}{2} - \frac{2}{3}$ has no fixed point for $a_0 > 0$ and is increasing. Hence, we have that there exists some $N$ such that $\gamma_{k} = \frac{1}{3}$ for all $k \geq N$. We thus have that $f(t) \leq Ct^{\frac{1}{3}}$.
\end{proof}

\bibliographystyle{acm}
\bibliography{references_mod}
\end{document}